\documentclass[smallextended,envcountsect]{svjour3}

\smartqed

\usepackage{graphicx}

%

\usepackage{latexsym,amsmath,amsfonts,amscd,amssymb,verbatim,hyperref}
\usepackage{color}
\usepackage{cite}
\usepackage[shortlabels]{enumitem}

\def\tto{\;{\lower 1pt \hbox{$\rightarrow$}}\kern -10pt
	\hbox{\raise 2pt \hbox{$\rightarrow$}}\;}

\def\epsilon{\varepsilon}


\begin{document}
	
	\title{Error Bounds for a Class of Cone-Convex Inclusion Problems}
	
	\author{Nguyen Quang Huy \and Nguyen Huy Hung \and Nguyen Van Tuyen \and Hoang Ngoc Tuan}
	
	\institute{Nguyen Quang Huy \at Department of Mathematics, Hanoi Pedagogical University 2, Vinh Phuc, Vietnam\\
		nqhuy@hpu2.edu.vn\and 
		Nguyen Huy Hung \at
		Department of Mathematics, Hanoi Pedagogical University 2, Vinh Phuc, Vietnam\\
		nguyenhuyhung@hpu2.edu.vn
		\and 
		Nguyen Van Tuyen \at
		Department of Mathematics, Hanoi Pedagogical University 2, Vinh Phuc, Vietnam\\
		nguyenvantuyen83@hpu2.edu.vn	
		\and 
		Hoang Ngoc Tuan, Correspoding author \at
		Department of Mathematics, Hanoi Pedagogical University 2, Vinh Phuc, Vietnam\\
		hoangngoctuan@hpu2.edu.vn			
	}
	
	\date{Received: date / Accepted: date}
	
	\titlerunning{Error Bounds for a Class of Cone-Convex Inclusion Problems}
	
	\authorrunning{N. Q. Huy, N. H. Hung, N. V. Tuyen, H. N. Tuan}
	
	\maketitle
	

\begin{abstract}  In this paper, we investigate error bounds for cone-convex inclusion problems in finite-dimensional settings of the form $f(x)\in K$, where $K$ is a smooth cone and  $f$ is a continuously differentiable and $K$-concave function. We show that local error bounds for the inclusion can be characterized by the Abadie constraint qualification around the reference point. In the case where $f$ is an affine function,  we precisely identify the conditions under which the inclusion admits global error bounds. Additionally, we derive some properties of smooth cones, as well as regular cones and strictly convex cones.
	
\end{abstract}

\keywords{Error bound \and smooth cone \and cone-convex inclusion \and Abadie constraint qualification \and strictly convex cone}
\subclass{15A99 \and 49K99 \and 52A20 \and 65K99 \and 90C25}

\section{Introduction}

Let $K$ be a non-empty, closed and convex cone in $\mathbb{R}^m$, and let $h: \mathbb{R}^n\to\mathbb{R}^m$ be a  differentiable  function that is $K$-concave, i.e., 
\begin{equation*}\label{C_concave}
h\big[(1-\lambda)x+\lambda y\big]-\big[(1-\lambda)h(x)+\lambda h(y)\big]\in K   \ \, \forall x,y\in\mathbb{R}^n,\ \, \forall \lambda\in[0,1].
\end{equation*}
The problem of finding points $x\in \mathbb{R}^n$ satisfying 
\begin{equation}\label{convex_cone_inclusion_pre}
h(x)\in K
\end{equation}
is called the cone-convex inclusion problem.
We denote the solution set of \eqref{convex_cone_inclusion_pre} by
\begin{equation}\label{sigma_define_pre}
S: =\{x \in \mathbb{R}^n\, : h(x)\in K \big\},
\end{equation}
which is assumed to be nonempty throughout the paper. 

The constraint set of numerous optimization problems can be formulated in the framework of \eqref{convex_cone_inclusion_pre}. For instance, when $K=\mathbb{R}^p_-\times \{0\}^{m-p}$, the inclusion reduces to a finite system of $p$ convex inequalities and $m-p$ affine equations.  

In  more general settings,  the problem and associated error bounds has been studied extensively in the literature, including works by Robinson \cite{Robinson_75,Robinson_76}, Ursescu \cite{Ursescu_75}, Ngai and Théra  \cite{Ngai_Thera_2005}, and Burke and Deng \cite{Burke_Deng_2009}.

The inclusion \eqref{convex_cone_inclusion_pre} is said to  satisfy a local error bound at a point $\bar x\in  S $ if there exists a neighborhood $U$ of $\bar x$ and a constant $\alpha>0$ such that
\begin{equation}\label{error_bound_definition} 
d(x, S )\le \alpha\, d(h(x),K)\, \forall x\in   U,
\end{equation}
where $d(x,D):=\inf\{\|x-y\|:\, y\in D\}$ for any subset $D$ of a Euclidean space. If $U=\mathbb{R}^n$, then \eqref{error_bound_definition}  is said to be a global error bound for \eqref{convex_cone_inclusion_pre}.

Error bounds have numerous applications in various areas of optimization. They play a crucial role in the convergence analysis of numerical algorithms for optimization problems and variational inequalities. Error bounds have also proven to be significant in the context of metric regularity and sensitivity analysis. The reader is referred to the paper \cite{Pang_1997} for a comprehensive survey of the broad theory and rich applications of error bounds in mathematical programming. 

Now, we present some constraint conditions for \eqref{convex_cone_inclusion_pre}, which are closely related to the existence of error bounds for \eqref{convex_cone_inclusion_pre}. Let $\hat S$ be a subset of $S$. According to \cite{Burke_Deng_2009}, we say that the inclusion \eqref{convex_cone_inclusion_pre} satisfies the Abadie constraint qualification  for $\hat S $ (ACQ($\hat S$) for brevity)  if 	
\begin{equation}\label{abadie_general_case_pre}
h'(x)^*[N_{K}(h( x))]=N_{ S }(x)\, \forall x\in  \hat S,
\end{equation}
where $N_D(x)$ denotes the normal cone of a convex set $D$ at $x\in D$ in a Eulidean space.
The inclusion satisfies the Mangasarian–Fromovitz constraint qualification  for $\hat S$ (MFCQ($\hat S$) for brevity) if $${\rm Ker}h'(x)^*=N_K(h( x))\, \forall x \in \hat S,$$ where ${\rm Ker}\,T$ and $T^*$ denote the kernel and the adjoint operator of the linear operator $T$ between Euclidean spaces, respectively. Moreover, \eqref{convex_cone_inclusion_pre} is said to be satisfied the Slater constraint qualification (SCQ for brevity) if there is a point $\hat{x}\in \mathbb{R}^n$ such that $h(\hat{x})\in {\rm ri}K$, where ${\rm ri} K$ denotes the relative interior of $K$. From \cite[Proposition 2]{Burke_Deng_2009} we have
\begin{equation*}\label{CQ_relations}
{\rm MFCQ}(\{x\})\Rightarrow {\rm SCQ}\Rightarrow {\rm ACQ}(S)
\end{equation*}
for any $x\in S$.
In particular, if  ${\rm int}K$ is nonempty, then $ {\rm SCQ}\Rightarrow{\rm MFCQ}(S)$.

Next, we discuss some literature concerning error bounds for \eqref{convex_cone_inclusion_pre} and their relationship with the above constraint qualifications. 
When $K=\mathbb{R}^m_-$, the inclusion \eqref{convex_cone_inclusion_pre} is reduced to the following system of finitely many differentiable convex inequalities
\begin{equation}\label{polyhedral_cone_inclusion}
h_i(x)\le 0,\ \, i=1,\ldots,m,
\end{equation}
where each component $h_i$ of $h$ is a convex function on $\mathbb{R}^n$.  (In fact, by \cite[Theorem 25.5]{Rocka}, we obtain that $h_i$, for $i=1,\ldots,m,$ is continuously differentiable. Therefore, in this case, the function $h$ is also continuously differentiable.)   It was Hoffman \cite{Hoffman_52} who first established  global error bounds  for \eqref{polyhedral_cone_inclusion} in the case where each $h_i$ is an affine function. If each $h_i$ is a convex quadratic function, the equivalence between a global error bound for \eqref{polyhedral_cone_inclusion} and  ACQ($S$) holds (see \cite[Theorem 4.2]{Li_1997}). In more general cases, global error bounds for \eqref{polyhedral_cone_inclusion} can be obtained under the Slater constraint qualification and an asymptotic qualification condition \cite{Magasarian_1985}. Without these two qualification conditions, it is proven in \cite[Theorem 3.5]{Li_1997} that a local error bound for \eqref{polyhedral_cone_inclusion} at $\bar x\in S$ is equivalent to ACQ($S\cap U$), where $U$ is a neighborhood of $\bar x$. 

The equivalence between local error bounds and the Abadie constraint qualification for \eqref{convex_cone_inclusion_pre} still holds when $K$ is an arbitrary polyhedral convex cone (see \cite[Theorem 3]{Burke_Deng_2009}). However, when $K$ is not polyhedral, this equivalence generally does not hold. In fact, we have the following result.

\begin{theorem}\label{error_bound_equivalence}{\rm (cf. \cite[Theorem 2]{Burke_Deng_2009})}
	Assume that $K\subset \mathbb{R}^m$ is a non-empty, closed and convex cone and $h:\mathbb{R}^n\to\mathbb{R}^m$ is a continuously differentiable and $K$-concave function. Consider the following condition:
	\begin{equation}\label{general_inclusion_pre}
	h'(x)^*\big[N_{K}(h(x))\big]\cap \bar B_{\mathbb{R}^n}\subset \tau h'(x)^*\big[N_{K}(h(x))\cap \bar B_{\mathbb{R}^m}\big].
	\end{equation}
	Then, the inclusion \eqref{convex_cone_inclusion_pre} satisfies a local error bound at $\bar x\in  S $ if and only if the exist $\delta>0$ and  $\tau>0$ such that both the ACQ and \eqref{general_inclusion_pre} hold for all $x\in  S \cap \bar B(\bar x,\delta)$. Moreover, a global error bound holds for \eqref{convex_cone_inclusion_pre}  if and only if the exists $\tau>0$ such that both the ACQ and \eqref{general_inclusion_pre} hold for all $x\in  S $.
\end{theorem}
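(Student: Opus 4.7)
The plan is to reinterpret the error bound condition via the convex function $\varphi(x) := d(h(x), K)$, whose zero set is precisely $S$. Since $K$ is a closed convex cone and $h$ is continuously differentiable and $K$-concave, a direct computation using the triangle inequality and the fact that $K+K=K$ shows that $\varphi$ is convex on $\mathbb{R}^n$, and the standard chain rule yields $\partial\varphi(x) = h'(x)^*\bigl[\partial d(\cdot,K)(h(x))\bigr]$, with $\partial d(\cdot,K)(y) = N_K(y) \cap \bar B_{\mathbb{R}^m}$ whenever $y \in K$. This identification converts the metric inequality in \eqref{error_bound_definition} into a statement about the sizes of subgradients of $\varphi$, which is exactly where ACQ and the quantitative condition \eqref{general_inclusion_pre} enter.

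For the implication ``ACQ and \eqref{general_inclusion_pre} $\Rightarrow$ error bound'', the approach is Ekeland's variational principle, equivalently the Az\'e--Corvellec slope characterization of error bounds for convex functions. Given $x$ near $\bar x$ with $x \notin S$, Ekeland's principle produces a nearby point $y$ (by a small further perturbation one can arrange $y\in S\cap\bar B(\bar x,\delta)$) at which $\varphi$ admits a small subgradient $v \in \partial\varphi(y)$. The chain-rule formula writes $v = h'(y)^* u$ with $u \in N_K(h(y)) \cap \bar B_{\mathbb{R}^m}$, but to contradict the hypothesis that $\|v\|$ is too small one needs a lower bound on $\|u\|$ transferable to $\|v\|$; this is exactly what condition \eqref{general_inclusion_pre}, read as a controlled inverse of the restriction of $h'(y)^*$ to $N_K(h(y))$, together with ACQ provides. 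Turning the estimate around gives $d(x,S) \le \tau\, d(h(x),K)$ locally.

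For the reverse implication, the inclusion $h'(x)^*[N_K(h(x))] \subset N_S(x)$ at any $x \in S$ is automatic from the convex chain rule. The error bound estimate $d(x,S) \le \alpha\, d(h(x),K)$ converts, via polarity and convex-analytic duality (or a direct support-function argument based on projecting $h(x)$ onto $K$ and pulling back through $h'(x)$), into the reverse inclusion $N_S(x) \cap \bar B_{\mathbb{R}^n} \subset \alpha\, h'(x)^*[N_K(h(x)) \cap \bar B_{\mathbb{R}^m}]$, which simultaneously delivers ACQ and \eqref{general_inclusion_pre} with $\tau := \alpha$. The global statement is obtained by running the same arguments with $\delta$ replaced by $+\infty$, noting that Ekeland's principle imposes no localisation.

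The principal obstacle is the non-polyhedrality of $K$: while ACQ forces $N_S(x)$ and $h'(x)^*[N_K(h(x))]$ to coincide as sets, nothing prevents a bounded-norm normal $v \in N_S(x)$ from being representable only through unbounded elements of $N_K(h(x))$. Condition \eqref{general_inclusion_pre} is exactly the quantitative ``openness'' of the restricted adjoint $h'(x)^*$ on $N_K(h(x))$ needed to run the slope/Ekeland estimate, and verifying that this single additional requirement is both necessary and sufficient once ACQ holds is the technical heart of the result.
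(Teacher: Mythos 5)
The paper does not actually prove this theorem: it is stated as a citation of \cite[Theorem~2]{Burke_Deng_2009} and used as a black box in Sections~4 and~5, so there is no in-paper proof to compare against. Measured against the argument in the cited source, your first and third paragraphs are essentially a correct reconstruction: the reduction to the convex function $\varphi(x)=d(h(x),K)$ (convexity following from the $K$-antitonicity of $d(\cdot,K)$ and the $K$-concavity of $h$), the chain rule $\partial\varphi(x)=h'(x)^*\bigl[N_K(h(x))\cap\bar B_{\mathbb{R}^m}\bigr]$ at points of $S$, and the translation of the error bound into the weak-sharp-minimum inclusion $N_S(x)\cap\bar B_{\mathbb{R}^n}\subset\tau\,\partial\varphi(x)$, which simultaneously yields the ACQ and \eqref{general_inclusion_pre}, are exactly the right ingredients, and your necessity argument is sound.

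The sufficiency direction as you sketch it has a genuine gap. Ekeland's principle applied to $\varphi$ at a point $x\notin S$ produces a nearby point $y$ carrying a small subgradient, but that information is only useful where $\varphi(y)>0$: if your ``small further perturbation'' lands $y$ in $S$, then $0\in\partial\varphi(y)$ and the small-subgradient conclusion is vacuous, whereas the hypotheses ACQ and \eqref{general_inclusion_pre} are assumed only at points of $S$ and so cannot be invoked at the off-$S$ point Ekeland actually delivers. The standard way to close this (the route of Burke--Deng, going back to Burke--Ferris) avoids Ekeland altogether: for $x$ near $\bar x$ let $y$ be the metric projection of $x$ onto $S$, note that $(x-y)/\|x-y\|\in N_S(y)\cap\bar B_{\mathbb{R}^n}$, use ACQ and \eqref{general_inclusion_pre} to write it as $\tau\,h'(y)^*u$ with $u\in N_K(h(y))\cap\bar B_{\mathbb{R}^m}\subset K^\circ$, and then combine the convexity of $z\mapsto\langle u,h(z)\rangle$ (as in the proof of Lemma~\ref{Ker_normal_cone}) with the support-function representation $d(h(x),K)\ge\langle u,h(x)\rangle-{\rm supp}_K(u)=\langle u,h(x)\rangle$ to obtain $d(x,S)=\|x-y\|\le\tau\,d(h(x),K)$ directly. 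Replacing your Ekeland paragraph by this projection argument would turn the sketch into a complete proof.
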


It is implied from the above theorem that, roughly speaking, the ACQ is a necessary condition for the existence of error bounds for \eqref{convex_cone_inclusion_pre}. It is also worth noting that the SCQ and MFCQ are sufficient conditions for the existence of a local error bound for \eqref{convex_cone_inclusion_pre}  at every point in $S$ \cite[Theorem 4]{Burke_Deng_2009}.

When $h$ is an affine function of the form $h(x)=Ax+b$, where $A: \mathbb{R}^{n}\to  \mathbb{R}^m$ is linear and $b\in \mathbb{R}^m$, the inclusion \eqref{convex_cone_inclusion_pre} simplifies to $Ax+b \in K$. The existence of global error bounds for this inclusion has been studied in \cite{Burke_Tseng_1996,Burke_Deng_2009,Ng_Yang_2002}. Specifically, for the case where $K$ is a second-order cone,  the authors in \cite[Section 5]{Ng_Yang_2002} identify exactly when the inclusion has a global and when it does not.

In \cite{Roshchina_Tucel_2019}, the authors introduce the concept of \textit{smooth cones}, a subclass of \textit{strictly convex cones}. (The definitions of these types of cones will be presented in the next section.)  Smooth cones include \textit{second-order cones}, which are among the most significant non-polyhedral cones.  Second-order cones play a crucial role in second order-cone programming (SOCP), which has numerous applications in optimization \cite{Alizadeh_Goldfarb_2003,LVBL_1997}. Notably, the feasible set of a SOCP problem with a single cone constraint can be formulated as  \eqref{convex_cone_inclusion_pre} when $K$ is a second-order cone and $h$ is an affine function \cite{LVBL_1997}. 
Two generalizations of second-order cones, $p$-\textit{order cones} ($p\in (1,+\infty)$) and \textit{circular cones}, are also included in the class of smooth cones. Their properties and applications have been investigated in various studies \cite{Ito_Luoenco_2017,Xue_Ye_2000,Zho_Chen_2013,Zho_Chen_Mordukhovich_2014}. 
These observations illustrate the wide range of potential applications for smooth cones, emphasizing their significance for further study.

The aim of this paper is to investigate error bounds for \eqref{convex_cone_inclusion_pre} under the assumptions that the cone $K$ is smooth and the function $h$ is continuously differentiable and $K$-concave. Our contributions can be outlined as follows:

\medskip
$\bullet$ We first present some properties of regular cones, strictly convex cones and smooth cones that will be  useful for our purposes. In particular, we demonstrate that every smooth cone is strictly convex and provide several characterizations of smooth cones.

\medskip
$\bullet$ We establish the equivalence between a local error bound for \eqref{convex_cone_inclusion_pre} at $\bar x\in S$ and the ACQ($S\cap U$),  where $U$ is some neighborhood of $\bar x$. Additionally, we present an example demonstrating that global error bounds for \eqref{convex_cone_inclusion_pre} cannot be characterized by the ACQ. 

\medskip
$\bullet$  We precisely determine the cases in which the inclusion $Ax+b\in K$ has a global error bound. These results extend those in \cite[Section 5]{Ng_Yang_2002}, where $K$ is assumed to be a second-order cone.

\medskip

The rest of the paper is organized as follows. In Section 2, we present some notations and preliminaries. A number of results involving regular cones, strictly convex cones and smooth cones are  stated in Section 3. Section 4 focuses on local error bounds for \eqref{convex_cone_inclusion_pre}. Several auxiliary results are also presented in this section. In Section 5, we address  global error bounds for \eqref{convex_cone_inclusion_pre} when $h$ is affine.  Some concluding remarks are given in the last section.

\section{Preliminaries}

In this paper, the spaces $\mathbb{R}^n$ and $\mathbb{R}^m$ are referred to Euclidean spaces with dimension $n$ and $m$, respectively. In the following, we will present some notations for a general (finite-dimensional) Euclidean space $X$. 

The inner product and norm in $X$ are defined as follows: for any $x,y\in X$, $\langle x,y\rangle=x^Ty$, and $\|x\|=(x^Tx)^{1/2}$. Open and closed balls with center $x$ and radius $\varepsilon$ are denoted by $B(x, \varepsilon)$ and $\bar B(x, \varepsilon)$, respectively. Specifically, the open and closed unit balls in $ X$ are represented as $B_{ X}$ and $\bar B_{ X}$, respectively.   If $C$ is a subset of $ X$, we denote its \textit{interior},\textit{ relative interior}, \textit{closure} and \textit{boundary}  by ${\rm int}C$, ${\rm ri}C$, ${\rm cl}C$ (or $\bar C$) and $ {\rm bd} C $, respectively. For a subspace $L$ of $ X$, its \textit{orthogonal complement} is defined as $L^\perp := \{y\in X : x^T y = 0, \forall x \in L\}$. 

Given a set $D\subset X$, 
we denote by ${\rm span}(D)$ the intersection of all linear subspaces of $X$ that contain
$D$.

Consider  a nonempty convex set $C\subset X$. By the dimension of $C$, we mean the dimension of its affine hull. The \textit{normal cone} $N_C(\bar x)$ to $C$ at $\bar x\in C$ is defined as 
$$N_C(\bar x):=\{y\in  X:\, \langle y,x-\bar x\rangle \le0\, \forall x\in C\}.$$
According to \cite{Rocka}, a \textit{supporting half-space} to $C$ is a closed half-space which contains $C$ with a point in its boundary. A  \textit{supporting hyperplane} to $C$ is a hyperplane forming the boundary of a  supporting half-space to $C$. 
A convex set $\mathcal{F} \subset C$ is said to be a \textit{face} of $C$ if the following  condition holds: if $x, y \in C$ and $\lambda x + (1 - \lambda)y \in \mathcal{F}$ for some $0 < \lambda < 1$, then $x, y \in \mathcal{F}$ .
A face $\mathcal{F}$ of $C$ is said to be \textit{proper} if $\mathcal{F} \ne C$.  An \textit{exposed face} of $C$ (aside from C itself and possibly  the empty set) is a set of the form $C\cap H$, where $H$ 
is a non-trivial supporting hyperplane to $C$ \cite[p. 162]{Rocka}. 

A set  $C\subset  X$ is called a \textit{cone} if for any $x\in C$ and $\lambda\ge0$ we have $\lambda x\in C$. Every face of a closed convex cone is also a closed convex cone. In fact, a face of a closed convex cone $C$ is a closed convex cone $\mathcal{F}\subset C$ satisfying that if $x\in C$, $y-x\in C$ and $y\in \mathcal{F}$, then $x\in\mathcal{F}$ (see \cite[Definition 1.3]{Barker_1981} and \cite[Definition 2.3]{Barker_1973}). The \textit{polar cone} of a convex cone $C$ is defined as 
$$C^\circ:=\{y\in  X: \langle x, y\rangle\le0\, \forall x\in C\}.$$
If $C$ is closed then $(C^\circ)^\circ=C$ (see \cite[Theorem 14.1]{Rocka}).  A set $\ell\subset C$ is called a \textit{ray} if it has the form $\ell=\{\lambda x: \lambda \ge0\}$, where $x \ne 0$. 

A ray of a convex cone $C$ is called an \textit{extreme ray} if it is a face of $C$ (see \cite[p. 162]{Rocka}). A ray which is an exposed face is called an \textit{exposed ray}. Every exposed ray is an extreme ray (see \cite[p. 163]{Rocka}).

We say that a cone $C$ is pointed if $C\cap (-C)=\{0\}$.  As defined in \cite{Roshchina_Tucel_2019}, a cone is regular if it is a pointed, closed and convex set with nonempty interior. 

\begin{definition}\label{strictlyconvex_smooth_definition}
	{\rm (i) (cf. \cite[pp. 328--329]{Ito_Luoenco_2017})
		A regular cone $C$ is called \textit{strictly convex} if every face of $C$,  except $C$ itself and $\{0\}$, has dimension one;
		
		(ii) (cf. \cite[p. 2372]{Roshchina_Tucel_2019}) A regular cone $C$ is said to be \textit{smooth}  if every boundary point of $C$ is on an
		extreme ray of $C$ and the normal cone of $C$ at every non-zero point of an arbitrary extreme ray of $C$ has dimension one.}
\end{definition}

It is well-known that the $p$-\textit{cone},  defined as
\begin{equation}\label{p_cones}
\mathcal{K}_p:=\Big\{x=(t,u)^T\in\mathbb{R}\times\mathbb{R}^{m-1}:t\ge \|u\|_p \Big\},
\end{equation}
is strictly convex \cite{Ito_Luoenco_2017}. 
Here,  $p\in (1,+\infty)$ and $\|\cdot\|_p$ denotes the $p$-norm in $\mathbb{R}^{m-1}$ defined as
$\|u\|_p=\displaystyle\big(\sum_{i=1}^{m-1}|u_i|^p\big)^{\frac{1}{p}}$ for all $u=(u_1,\cdots,u_{m-1})^T\in\mathbb{R}^{m-1}$.
The class of $p$-cones contains  one of the most important non-polyhedral cones, called the \textit{second-order cone}: 
\begin{equation}\label{2_cones}
\mathcal{K}_2=\Big\{x=(t,u)^T\in\mathbb{R}\times\mathbb{R}^{m-1}:t\ge \|u\|\Big\}.
\end{equation}
The second-order cone is also included in the class of \textit{circular-cones}, which are defined as
\begin{equation}\label{circular_cones}
\mathcal{L}_\theta:=\Big\{x=(t,u)^T\in\mathbb{R}\times\mathbb{R}^{m-1}:t\ge \cos\theta \|x\|\Big\},
\end{equation}
where $\theta\in (0,\pi/2)$ \cite{Zho_Chen_2013,Zho_Chen_Mordukhovich_2014}.

In the next section, we will see that  every smooth cone is strictly convex. Furthermore, we will show in detail that $p$-cones and circular cones, including the second-order cone, are smooth.

Consider a linear operator $T:\, X\to Y$ between Euclidean spaces $X$ and $Y$. The adjoint operator of $T$, $T^*:\, Y\to X$, is defined as 
$$\langle Tx,y\rangle=\langle x,T^*y\rangle\,\forall \, x\in X,\,y\in Y.$$
The \textit{image} and the \textit{kernel} of $T$ are denoted by ${\rm Im}\,T$ and ${Ker}\,T$, respectively. Then we have $({Ker}\,T^*)^\perp= {\rm Im}\,T$. For a linear subspace $L$ of $X$, denote by $T|_L$ the restriction of $T$ to $L$. 

For a multifunction $F: X\rightrightarrows Y$ between Euclidean spaces $X$ and $Y$, we denote by ${\rm dom} F$ its \textit{domain}. Following \cite[pp. 25--26]{Bank_et_al_1982}, the multifunction is called \textit{lower semicontinuous} (l.s.c.) at  a point $\bar x\in {\rm dom} F$ if for any open subset $V\subset Y$ satisfying $F(\bar x)\cap V\ne \emptyset$, there exists $\varepsilon>0$ such that $F(x)\cap V\ne \emptyset$ for all $x\in B_X(\bar x,\varepsilon)$. Moreover, it is called \textit{strongly lower semicontinuous} (strongly l.s.c.) at  a point $\bar x\in {\rm dom} F$ if for each $y\in F(\bar x)$ there exist $\varepsilon>0$ and $\delta>0$ such that $B_Y(y,\varepsilon)\subset F(x)$ for every $x\in B_X(\bar x,\delta)$. Clearly, if $F$ is strongly l.s.c. at $\bar x$, then it is l.s.c. at the same point.

\section{Properties of regular cones, strictly convex cones and smooth cones}

In this section, we present several properties of regular cones, strictly convex cones and smooth cones that are crucial for establishing error bounds of the inclusion \eqref{convex_cone_inclusion_pre}. While some of the results may be known, we provide detailed proofs for the sake of completeness. We begin with the following properties of regular cones. 
\begin{proposition} \label{regular_cone}
	Let $C\subset X$ be a regular cone. Then, the following hold:
	
	(i) The polar cone $C^\circ$ is regular and if $\bar x\in  {\rm bd} C\setminus\{0\}$, then 
	\begin{equation}\label{normal_cone_form}
	N_C(\bar x)=\{y\in{\rm bd} C^\circ:\, \langle y,\bar x\rangle=0\}. 
	\end{equation}
	
	(ii) If $x$ and $y$ are two non-zero vectors contained in a same ray of $ {\rm bd}  C$, then we have $N_C(x)=N_C(y)$. 
\end{proposition}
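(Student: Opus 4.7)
The plan is to prove (i) first by establishing regularity of $C^\circ$ and then deriving the explicit formula for $N_C(\bar x)$, after which (ii) will follow as a short corollary of (i).

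For the regularity of $C^\circ$: closedness and convexity are direct from the definition of the polar as an intersection of closed half-spaces through the origin. To see pointedness of $C^\circ$, I would note that $y\in C^\circ\cap(-C^\circ)$ forces $\langle y,x\rangle=0$ for every $x\in C$; since $\mathrm{int}\,C\neq\emptyset$, the set $C$ spans $X$, which forces $y=0$. For nonempty interior of $C^\circ$, I would invoke the standard duality fact (cf.\ Rockafellar's book) that a closed convex cone is pointed if and only if its polar has nonempty interior; alternatively, since $C$ is pointed, one can strictly separate the compact set $(-C)\cap \bar B_X$ from a suitable point and rescale the separating normal to produce an explicit element of $\mathrm{int}\,C^\circ$.

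For the formula $N_C(\bar x)=\{y\in\mathrm{bd}\,C^\circ:\langle y,\bar x\rangle=0\}$, I would exploit the cone structure: testing the defining inequality $\langle y,x-\bar x\rangle\le 0$ at $x=0$ and $x=2\bar x$ (both in $C$) yields $\langle y,\bar x\rangle=0$, after which the inequality reduces to $\langle y,x\rangle\le 0$ for all $x\in C$, i.e.\ $y\in C^\circ$. This gives $N_C(\bar x)=C^\circ\cap \bar x^\perp$, and the reverse inclusion is immediate. It then remains to upgrade membership in $C^\circ$ to membership in $\mathrm{bd}\,C^\circ$: if some $y\in N_C(\bar x)$ were in $\mathrm{int}\,C^\circ$, then for small $\varepsilon>0$ the perturbation $y+\varepsilon \bar x/\|\bar x\|$ would still lie in $C^\circ$, yet pairing it with $\bar x\in C$ gives $\varepsilon\|\bar x\|>0$, contradicting the polarity relation.

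For (ii): if $x$ and $y$ are two nonzero vectors on the same ray in $\mathrm{bd}\,C$, then $y=\lambda x$ for some $\lambda>0$, so the condition $\langle z,x\rangle=0$ is equivalent to $\langle z,y\rangle=0$. Since both $x,y\in\mathrm{bd}\,C\setminus\{0\}$, applying (i) to each gives $N_C(x)=N_C(y)$. The step I expect to require the most care is the nonempty interior of $C^\circ$, because it uses a duality property of pointedness that is clean to state but technical to derive from scratch; I would therefore rely on the bipolar theorem rather than reprove it.
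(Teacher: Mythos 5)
Your proposal is correct and follows essentially the same route as the paper: regularity of $C^\circ$ via standard polarity/duality facts, the normal-cone formula by testing the defining inequality at $x=0$ and $x=2\bar x$ to get $\langle y,\bar x\rangle=0$ and $y\in C^\circ$, and then ruling out $y\in\mathrm{int}\,C^\circ$. The only difference is cosmetic: where the paper cites a proposition of Auslender--Teboulle for pointedness of $C^\circ$ and for the strict inequality $\langle y,\bar x\rangle<0$ when $y\in\mathrm{int}\,C^\circ$, you substitute elementary self-contained arguments (the spanning argument for pointedness and the perturbation $y+\varepsilon\bar x/\|\bar x\|$ for the boundary step), both of which are valid.
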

\begin{proof}
	(i) Clearly, $C^\circ$ is closed and convex. The fact that $C^\circ$ is pointed and has non-empty interior is deduced from \cite[Propostition 1.1.15]{Auslender_Teboulle_2003}, along with the observation that $(C^\circ)^\circ=C$. Hence, $C^\circ$ is regular.
	
	Now, assume that $\bar x\in  {\rm bd}  C\setminus\{0\}$. Taking an arbitrary vector $y\in N_C(\bar x)$, by the definition of the normal cone, we have 
	$$
	\langle y,x-\bar x\rangle \le0\, \forall x\in C.
	$$
	Letting $x:=2\bar x\in C$, we get $\langle y,\bar x\rangle \le 0$; letting $x:=0\in C$, we get $\langle y,\bar x\rangle \ge 0$. Thus, $\langle y,\bar x\rangle =0$. It follows that
	$$ \langle y,x\rangle\le \langle y,\bar x\rangle=0\, \forall x\in C.$$
	This yields $y\in C^\circ$. If $y\in {\rm int}\, C^\circ$, then invoking  \cite[Proposition 1.1.15]{Auslender_Teboulle_2003} again, we get $\langle y,\bar x\rangle<0$, a contradiction. So, $y \in {\rm bd}  C^\circ$, implying \eqref{normal_cone_form}.

	(ii) Assuming that two non-zero vectors $x$ and $y$ are contained in the same ray of $ {\rm bd}  C$, there exists $\lambda>0$ such that $y=\lambda x$. Taking an arbitrary vector $z\in N_C(x)$, from the proof of part (i) we have $\langle z,x\rangle=0$ and $\langle z,u\rangle\le0$ for every $u\in C$. This implies that $\langle z,y\rangle=\langle z,\lambda x\rangle=\lambda\langle z, x\rangle=0$. Hence, $\langle z,u-y\rangle \le0$ for every $u\in C$. This means that $z\in N_C(y)$. Thus, $N_C(x)\subset N_C(y)$. Using similar arguments as above, we also have $N_C(y)\subset N_C(x)$. So, $N_C(x)= N_C(y)$. $\hfill\Box$
\end{proof}

Next, we consider regular cones in relation to linear subspaces.
\begin{proposition}\label{equality_of_subspaces}
	Let $L_1$ and $L_2$ be linear subspaces and  $C$ be a regular cone in $X$. Assume that  $L_1\cap {\rm int}C\ne \emptyset$. Then, the following assertions hold:
	
	(i)   $(u + L_1) \cap {\rm int} C \neq \emptyset$ for all $u \in X$;
	
	(ii)	If $L_1\cap C=L_2\cap C$, then $L_1=L_2$.
\end{proposition}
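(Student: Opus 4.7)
The plan is to prove (i) by a straightforward scaling argument exploiting both the cone structure of $C$ and the interior witness in $L_1$, and then to derive (ii) from (i) by pushing an arbitrary vector of one subspace into ${\rm int}\,C$ along a common interior point and invoking the identity $L_1\cap C=L_2\cap C$.

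For (i), I would fix once and for all a point $v\in L_1\cap{\rm int}\,C$ and choose $\varepsilon>0$ with $\bar B(v,\varepsilon)\subset C$. Because $C$ is a cone, scaling by any $\lambda>0$ gives $\bar B(\lambda v,\lambda\varepsilon)=\lambda\bar B(v,\varepsilon)\subset C$, hence $B(\lambda v,\lambda\varepsilon)\subset{\rm int}\,C$. Given $u\in X$, pick $\lambda>\|u\|/\varepsilon$; then $\|(u+\lambda v)-\lambda v\|=\|u\|<\lambda\varepsilon$, so $u+\lambda v\in{\rm int}\,C$. Since $\lambda v\in L_1$, we conclude $u+\lambda v\in (u+L_1)\cap{\rm int}\,C$.

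For (ii), the key observation is that any $v\in L_1\cap{\rm int}\,C$ also belongs to $L_2$, since $v\in L_1\cap C=L_2\cap C$; in particular $v\in L_2\cap{\rm int}\,C$ as well, so the hypothesis of (i) is automatically symmetric in $L_1$ and $L_2$. Given any $x\in L_1$, part (i) applied with $u=x$ yields $\lambda>0$ with $x+\lambda v\in{\rm int}\,C$; since $x,v\in L_1$, we have $x+\lambda v\in L_1\cap C=L_2\cap C\subset L_2$, and subtracting $\lambda v\in L_2$ gives $x\in L_2$. Thus $L_1\subset L_2$, and the reverse inclusion follows by the same argument with the roles of $L_1$ and $L_2$ swapped. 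There is no substantive obstacle here; the only step worth emphasizing is the symmetrization of the hypothesis via the common interior witness $v$, which makes the two inclusions in (ii) completely parallel.
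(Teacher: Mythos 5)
Your proof is correct, but it takes a noticeably different and more elementary route than the paper's. For (i), the paper argues by contradiction: it separates $\bar u+L_1$ from $C$ by a hyperplane, deduces that the separating functional vanishes on $L_1$ and is nonnegative on $C$, and then contradicts this using the interior witness. You instead give a direct construction: scale the interior witness $v$ so that the ball $\lambda\bar B(v,\varepsilon)=\bar B(\lambda v,\lambda\varepsilon)\subset C$ swallows the translate $u+\lambda v$. This is shorter, constructive, and avoids invoking a separation theorem; it also hands you an explicit point of $(u+L_1)\cap{\rm int}\,C$ of the special form $u+\lambda v$, which you then exploit in (ii). For (ii), the two arguments are essentially dual to each other: the paper perturbs the interior point $\bar u$ by a \emph{small} multiple of the given $u\in L_1$ to land in $B(\bar u,\varepsilon)\subset{\rm int}\,C$, whereas you push $x$ by a \emph{large} multiple of $v$ into ${\rm int}\,C$; both then land in $L_1\cap C=L_2\cap C$ and subtract a vector already known to lie in $L_2$. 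Your symmetrization step (observing directly that $v\in L_1\cap{\rm int}\,C\subset L_1\cap C=L_2\cap C$, hence $v\in L_2\cap{\rm int}\,C$) is cleaner than the paper's contradiction argument for why $L_2\cap{\rm int}\,C\ne\emptyset$. One presentational remark: when you say part (i) ``yields $\lambda>0$ with $x+\lambda v\in{\rm int}\,C$,'' you are really invoking the construction inside your proof of (i) rather than its bare statement, which only guarantees some point of $(x+L_1)\cap{\rm int}\,C$; this is harmless but worth phrasing explicitly.
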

\begin{proof} 
	(i) Given an arbitrary vector $\bar{u} \in X$, if 
	\begin{equation}\label{nonempty_interior_cap}
	(\bar u+L_1)\cap {\rm int} C= \emptyset,
	\end{equation}
	then there exists a hyperplane that separates $\bar{u} + L_1$ and $C$. This hyperplane is characterized by a non-zero vector $w \in \mathbb{R}^m$ and a scalar value $\alpha$, such that 	
	\begin{equation}\label{separation_condition_L_C}
	\left\langle w, \bar{u} + u \right\rangle \leq \alpha\le \langle w, v \rangle \quad \forall u \in L_1\, , v\in C.
	\end{equation}
	Since $C$ is a cone, it follows from the second inequality of \eqref{separation_condition_L_C} that $\langle w,v\rangle\ge0$ for all $v\in C$. Furthermore, as $L_1$ is a linear space, it follows from the first inequality of \eqref{separation_condition_L_C} that $\left\langle w, u \right\rangle = 0$ for all $u \in L_1$. 
	
	On the other hand, by the assumption, there exists a vector $\bar v \in  L_1\cap {\rm int}C$. Hence, $\langle w,\bar v\rangle=0$ and $B(\bar v,\varepsilon)\subset C$ for some $\varepsilon>0$. Since $w\ne0$, $B(\bar v,\varepsilon)$ is full-dimensional and $\langle w,v\rangle\ge0$ for all $v\in C$, there is a vector $\bar z\in B(\bar v,\varepsilon)$ satisfying $\langle w,\bar z\rangle>0$. This yields $\langle w,2\bar v-\bar z\rangle<0$. However, $2\bar v-\bar z\in B(\bar v,\varepsilon)\subset C$, as $\|	(2\bar v-\bar z)-\bar v\|=\|\bar v-\bar z\|<\varepsilon$.	This contradicts the fact that $\langle w,v\rangle\ge0$ for all $v\in C$.

	(ii) 	Take any vector $u\in L_1$. If $u=0$ then $u\in L_2$. Now, consider the case $u\ne0$. Since $L_1\cap {\rm int}C\ne \emptyset$, there is a vector $\bar u\in L_1\cap{\rm int}C$. Thus, $\bar u\in L_1$ and there exists $\varepsilon>0$ such that $B(\bar u,\varepsilon)\subset {\rm int}C$. Set $\tilde{u}=\bar u+\varepsilon u/(2\|u\|)$. We have $\tilde{u}\in B(\bar u,\varepsilon)$, as $\|\tilde{u}-\bar u\|<\varepsilon$, and $\tilde{u}\in L_1$, as $L_1$ is linear. Hence, 
	$$\tilde{u}\in L_1\cap B(\bar u,\varepsilon)\subset L_1\cap C=L_2\cap C\subset L_2. $$
	We also have
	$$\bar{u}\in  L_1\cap C=L_2\cap C\subset L_2. $$
	Accordingly, $\varepsilon u/(2\|u\|)=\tilde{u}-\bar u\in L_2$. This yields $u\in L_2$, as $L_2$ is linear. Therefore, $L_1\subset L_2$. 
	
	To prove the reverse inclusion $L_2\subset L_1$, we   observe that $L_2\cap {\rm int}C\ne \emptyset$. Indeed, if $L_2\cap {\rm int}C = \emptyset$ then $L_2\cap C\subset  {\rm bd}  C$. Thus, $L_1\cap C=L_2\cap C\subset  {\rm bd}  C$, implying $L_1\cap {\rm int}C = \emptyset$, which is a contradiction. Therefore, using similar arguments as above, we obtain $L_2\subset L_1$. $\hfill\Box$
\end{proof}

\begin{proposition}\label{interrior_perpendicular_cone}
	Let $C$ be a regular cone and $L$ be a linear subspace in $X$. Then, we have the following:
	
	(i) $L\cap {\rm int}C^\circ\ne \emptyset$ if and only if $L^\perp \cap C=\{0\}$;
	
	(ii) $\{0\}\ne L\cap C^\circ \subset  {\rm bd}  C^\circ$ if and only if $\{0\}\ne L^\perp \cap C\subset  {\rm bd}  C$.
	
\end{proposition}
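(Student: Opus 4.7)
The plan is to reduce everything to part (i) and to exploit bipolarity. For a regular cone $C$, Proposition \ref{regular_cone} ensures that $C^\circ$ is again regular, and $C$ being closed yields the bipolar identity $(C^\circ)^\circ=C$. Moreover, by \cite[Proposition 1.1.15]{Auslender_Teboulle_2003} — the same citation already invoked in the proof of Proposition \ref{regular_cone} — one has $y\in{\rm int}\,C^\circ$ if and only if $\langle y,x\rangle<0$ for every $x\in C\setminus\{0\}$. These are the only ingredients I will need.

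For part (i), the direction $(\Rightarrow)$ is immediate: if $y_0\in L\cap{\rm int}\,C^\circ$ and $0\ne x\in L^\perp\cap C$, then $\langle y_0,x\rangle=0$ because $y_0\in L$ and $x\in L^\perp$, contradicting the strict inequality $\langle y_0,x\rangle<0$. For $(\Leftarrow)$ I argue contrapositively. Assuming $L\cap{\rm int}\,C^\circ=\emptyset$, I separate the non-empty convex set $L$ from the non-empty open convex set ${\rm int}\,C^\circ$, obtaining a non-zero $w\in X$ and $\alpha\in\mathbb{R}$ with
\[
\langle w,y\rangle\le\alpha\le\langle w,z\rangle\quad\forall\,y\in L,\ \forall\,z\in{\rm int}\,C^\circ.
\]
Linearity of $L$ forces $\langle w,y\rangle\equiv 0$ on $L$, so $w\in L^\perp$ and $\alpha\ge 0$; on the other hand $0\in\overline{{\rm int}\,C^\circ}=C^\circ$ gives $\alpha\le 0$. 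Hence $\alpha=0$ and $\langle -w,z\rangle\le 0$ for every $z\in C^\circ$, which by bipolarity means $-w\in(C^\circ)^\circ=C$. Since $w\ne 0$, the vector $-w$ is a non-zero element of $L^\perp\cap C$, as required.

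For part (ii), I apply part (i) once to the pair $(L,C)$ and once to $(L^\perp,C^\circ)$; using $(L^\perp)^\perp=L$ and $(C^\circ)^\circ=C$ this yields the twin equivalences
\begin{align*}
L\cap{\rm int}\,C^\circ\ne\emptyset &\iff L^\perp\cap C=\{0\},\\
L^\perp\cap{\rm int}\,C\ne\emptyset &\iff L\cap C^\circ=\{0\}.
\end{align*}
Assuming $\{0\}\ne L\cap C^\circ\subset{\rm bd}\,C^\circ$, the inclusion forces $L\cap{\rm int}\,C^\circ=\emptyset$, so the first equivalence delivers $L^\perp\cap C\ne\{0\}$; combined with $L\cap C^\circ\ne\{0\}$ and the second equivalence, this forces $L^\perp\cap{\rm int}\,C=\emptyset$, i.e. $L^\perp\cap C\subset{\rm bd}\,C$. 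The reverse implication is obtained by swapping the roles of $L$ with $L^\perp$ and $C$ with $C^\circ$ in exactly the same argument.

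The main obstacle I foresee is the $(\Leftarrow)$ direction of part (i), where the separation step must actually produce a vector in $L^\perp\cap C$ rather than merely some separating normal: the regularity of $C^\circ$ (to guarantee ${\rm int}\,C^\circ\ne\emptyset$, so that the separation theorem applies) and the bipolar identity $(C^\circ)^\circ=C$ are both essential to close this step. Once part (i) is secured, part (ii) is essentially bookkeeping carried out by the two polar/perpendicular substitutions above.
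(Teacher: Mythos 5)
Your proposal is correct and follows essentially the same route as the paper: the forward direction of (i) via the strict-inequality characterization of ${\rm int}\,C^\circ$ from \cite[Proposition 1.1.15]{Auslender_Teboulle_2003}, the reverse direction via separation of $L$ from $C^\circ$ combined with the bipolar identity $(C^\circ)^\circ=C$, and (ii) deduced from (i) applied to the pair $(L^\perp, C^\circ)$ (the paper merely states (ii) is a direct consequence of (i), which is exactly the bookkeeping you carry out). No gaps.
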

\begin{proof}
	(i) First, assume that $L\cap \,{\rm int}C^\circ= \emptyset$ but $L^\perp \cap C\ne\{0\}$. Then, it follows from the latter that there exists a non-zero vector $x\in L^\perp \cap C$. Take any vector $y\in L\cap \,{\rm int}C^\circ$. Since $x\in C\setminus \{0\}$ and $y\in {\rm int}C^\circ$, by \cite[Propostition 1.1.15]{Auslender_Teboulle_2003} we derive $\langle x, y\rangle<0$. However, because $x\in L^\perp$ and $y\in L$, we have $\langle x, y\rangle=0$, leading to a contradiction.
	
	Now, assume that $L^\perp \cap C=\{0\}$ but $L\cap {\rm int}C^\circ\ne \emptyset$. Then,  there exists a non-zero vector $\bar z\in X$ and a real number $\alpha$ satisfying the following two conditions
	$$\langle \bar z, x\rangle\ge\alpha\, \forall x\in L$$
	and 
	$$\langle \bar z, u\rangle\le\alpha\, \forall u\in C^\circ.$$
	Combining the first condition with the fact that $L$ is linear and the second one with the fact that $C^\circ$ is a cone, we deduce that
	$$\langle \bar z, x\rangle=0\, \forall x\in L$$
	and
	$$\langle \bar z, u\rangle\le0\, \forall u\in C^\circ.$$
	Thus, $\bar z\in L^\perp$ and $\bar z\in (C^\circ)^\circ=C$, implying $L^\perp \cap C\ne\{0\}$, which is a contradiction.

	(ii) This is a direct consequence of assertion (i). $\hfill\Box$
\end{proof}

Now we discuss some properties of   strictly convex cones. The next result provides significant characterizations of strictly convex cones.

\begin{proposition} \label{strictly_convex_cone_0}
	Let $C\subset X$ be a regular cone. Then, the following statements hold:
	
	(i) The cone $C$ is strictly convex if and only if  every ray lying on the boundary of $C$ is an extreme ray;
	
	(ii)  The cone $C$ is  strictly convex if and only if  for every $x,y\in C\setminus\{0\}$, with $y\ne \lambda x$ for all $\lambda>0$, we have
	\begin{equation}\label{strictly_Convex_interior}
	(x,y):=\{(1-\lambda) x+\lambda y:\, 0<\lambda<1\}\subset {\rm int}C.
	\end{equation}

\end{proposition}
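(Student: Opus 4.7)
The plan is to prove both equivalences by analysing the minimal face of $C$ containing a given point and invoking a midpoint construction for the less direct direction of (i).

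For (i), the forward implication proceeds as follows: given a ray $\ell=\mathbb{R}_+ x$ with $x\in {\rm bd}\,C\setminus\{0\}$, let $F_x$ denote the minimal face of $C$ containing $x$ (the intersection of all faces containing $x$). Since $x\in {\rm bd}\,C$, the face $F_x$ is proper, and since $x\ne 0$ it is nonzero; strict convexity then forces $\dim F_x=1$, and since $\mathbb{R}_+ x\subset F_x$ equality gives that $\ell$ is itself a face, i.e.\ an extreme ray. For the converse, I would take any proper face $F$ with $\dim F\ge 1$ and show $\dim F\le 1$. A short preliminary is needed: any proper face $F$ of a regular cone lies in ${\rm bd}\,C$, since a face meeting ${\rm int}\,C$ must equal $C$ (by a standard convex combination argument through an interior point). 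If one could pick linearly independent $x,y\in F$, then $z:=(x+y)/2$ is a nonzero element of $F\subset{\rm bd}\,C$ (nonzero by pointedness), and by hypothesis $\mathbb{R}_+ z$ is a face of $C$; the face property applied to $z=\tfrac12 x+\tfrac12 y$ with $x,y\in C$ then forces $x,y\in\mathbb{R}_+z$, contradicting linear independence.

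For (ii), the forward direction is another minimal-face argument: if $z=(1-\lambda)x+\lambda y\in {\rm bd}\,C$ for some $\lambda\in(0,1)$, the minimal face of $C$ containing $z$ is proper, nonzero and, by strict convexity, one-dimensional; the face property then forces $x,y$ to lie on the ray through $z$, yielding $y\in\mathbb{R}_+ x$, which contradicts the hypothesis. For the converse, I would reduce to (i) by showing every boundary ray $\ell=\mathbb{R}_+ a$ is a face. Suppose $z=(1-\lambda)x+\lambda y\in\ell$ with $x,y\in C$ and $\lambda\in(0,1)$: the cases $z=0$, $x=0$ or $y=0$ are immediate from pointedness and the ray structure of $\ell$; if $x,y$ are nonzero and not positively collinear, the standing hypothesis gives $z\in{\rm int}\,C$, contradicting $z\in\ell\subset{\rm bd}\,C$; and if $y=\mu x$ with $\mu>0$, then $z$ is a positive multiple of $x$, so the rays $\mathbb{R}_+ x$ and $\ell$ coincide, placing both $x,y$ in $\ell$.

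I expect the main obstacle to be the backward direction of (i): the hypothesis supplies information only about rays on ${\rm bd}\,C$, yet one must extract a one-dimensional structural constraint on arbitrary proper faces. The midpoint trick is the device that makes this work --- it converts a hypothetical two-dimensional face into a boundary ray, which the hypothesis then declares to be a face, so that the face property collapses the supposed linear independence. Once (i) is established, the remaining implications follow from the minimal-face machinery and a straightforward case split.
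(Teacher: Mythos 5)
Your proof is correct, and while its overall architecture (relating faces, boundary rays, and the interior-segment condition) matches the paper's, several key steps are carried out by genuinely different devices. In the forward direction of (i) the paper separates the ray from $C$ by a hyperplane to produce a proper \emph{exposed} face of dimension one, whereas you work with the minimal face of a boundary point; these are close in spirit, but note that justifying properness of that minimal face essentially requires a supporting hyperplane anyway, so you should say a word about it. The real divergence is in the backward direction of (i): the paper takes a relative-interior point of an arbitrary proper face, observes that the ray through it is a face, and invokes \cite[Corollary 18.1.2]{Rocka} to conclude the face equals that ray, while your midpoint trick --- two linearly independent points of the face average to a nonzero boundary point whose ray is a face, and the face property then collapses the independence --- is more self-contained and avoids the face-of-a-face machinery entirely. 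Similarly, for the converse of (ii) the paper argues directly that a non-ray face would contain an open segment in ${\rm int}\,C$, whereas you reduce to (i) by verifying the face property for boundary rays via a case split; your route is slightly longer but makes the logical dependence on (i) explicit in both directions of (ii). Two small points to tidy up: in the forward direction of (ii) you should observe that $z=(1-\lambda)x+\lambda y\ne 0$ (this uses pointedness: $z=0$ would give $(1-\lambda)x\in C\cap(-C)=\{0\}$), since otherwise the minimal face of $z$ could be $\{0\}$; and in the backward direction of (i) the midpoint $z=(x+y)/2$ is nonzero simply by linear independence of $x$ and $y$, not by pointedness as you state.
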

\begin{proof}
	(i) Suppose that $C$ is strictly convex. Let $\ell$ be any ray  on $ {\rm bd}  C$. Since $\ell\cap {\rm int}C=\emptyset$, there is a hyperplane $H\subset X$ separating $\ell$ and $C$. Clearly, $\ell\subset C\cap H$ and $C\cap H\ne C$. This implies that $C\cap H$ is a proper exposed face of $C$. Combining this with the fact that $C$ is strictly convex yields  ${\rm dim} \,(C\cap H)=1$, implying $\ell=C\cap H$. It follows that $\ell$ is an exposed ray, hence an extreme ray.
	
	Now, suppose that every ray lying on the boundary of $C$ is an extreme ray. Let $\mathcal{F}\subset C$ be any proper face of $C$ such that $\mathcal{F}\ne \{0\}$. By \cite[Lemma 2.6]{Barker_1973}, we have $\mathcal{F}\subset  {\rm bd}  C$. Since $\mathcal{F}$ is convex, there exists a vector $x\in {\rm ri}\,\mathcal{F}$. Clearly, $0\ne x\in  {\rm bd}  C$. Hence, the ray $S$ containing $x$ lies on  $ {\rm bd}  C$ and  is therefore an extreme ray, implying that $S$ is a face. Moreover, observe that $x\in {\rm ri}S$. Invoking \cite[Corollary 18.1.2]{Rocka} we obtain $\mathcal{F}=S$. This means that ${\rm dim}\mathcal{F}=1$. So, $C$ is strictly convex.
	
	(ii)  Suppose that $C$ is strictly convex. Take any non-zero vectors $x,y\in C$, with $y\ne \lambda x$ for all $\lambda>0$. If  $x\in {\rm int}C$ or $y\in {\rm int}C$, then by \cite[Theorem 6.1]{Rocka}, we get \eqref{strictly_Convex_interior}. If $x,y\in  {\rm bd}  C\setminus\{0\}$ but \eqref{strictly_Convex_interior} does not hold then there exists a vector $u\in  {\rm bd}  C\cap(x,y)$. The ray $S\subset  {\rm bd}  C$ containing $u$ is a face and has a nonempty intersection with $(x,y)$. It follows that $x,y\in S$, contradicting the condition that  $y\ne \lambda x$ for all $\lambda>0$. Thus, \eqref{strictly_Convex_interior} holds.
	
	Conversely, suppose that   for every non-zero vectors $x,y\in C$, with $y\ne \lambda x$ for all $\lambda>0$,  \eqref{strictly_Convex_interior} holds. Let $\mathcal{F}\subset C$ be any non-zero and proper face of $C$. Invoking again \cite[Lemma 2.6]{Barker_1973}, we get $\mathcal{F}\cap  {\rm int}  C=\emptyset$. If $\mathcal{F}$ is not a ray, then there are two non-zero vectors $x,y\in \mathcal{F}$ such that $y\ne \lambda x$ for all $\lambda>0$. Since $\mathcal{F}\subset C$, it follows from \eqref{strictly_Convex_interior} that $(x,y)\subset {\rm int}C$. Moreover, as $\mathcal{F}$ is convex, we have $(x,y)\subset \mathcal{F}$. This implies that  $(x,y)\subset \mathcal{F}\cap{\rm int}C$, which is a contradiction. Hence, $\mathcal{F}$ is a ray, implying ${\rm dim}(\mathcal{F})=1$. So, $C$ is strictly convex. $\hfill\Box$
\end{proof}

\begin{remark}\label{circular_cone_sc} The circular cone $\mathcal{L}_\theta$ defined by \eqref{circular_cones} is strictly convex. 	
	Indeed, according to \cite[Theorem 2.1]{Zho_Chen_2013}, there exists a positive definite matrix $M$ depending on $\theta$ such that  $\mathcal{L}_\theta=M^{-1}(\mathcal{K}_2)$ and $\mathcal{K}_2=M(\mathcal{L}_\theta)$, where $\mathcal{K}_2$ is the second-order cone defined by \eqref{2_cones}. Thus, taking any $x,y\in \mathcal{L}_\theta\setminus\{0\}$ with $y\ne \lambda x$ for all $\lambda>0$, we have $Mx,My\in \mathcal{K}_2\setminus\{0\}$ and $My\ne \gamma Mx$ for all $\gamma>0$. Since $\mathcal{K}_2$ is strictly convex, invoking Proposition \ref{strictly_convex_cone_0}(ii), we obtain $(Mx,My)\subset {\rm int}\mathcal{K}_2$. It follows that
	$$(x,y)=M^{-1}[(Mx,My)]\subset M^{-1}({\rm int}\mathcal{K}_2)={\rm int}[M^{-1}(\mathcal{K}_2)]={\rm int}\mathcal{L}_\theta.$$
	So, $\mathcal{L}_\theta$ is strictly convex.
\end{remark}

\begin{proposition} \label{strictly_convex_cone}
	Let $C\subset X$ be a strictly convex cone. Then, the following statements hold:
	
	(i) For every linear subspace $L\subset  X$ satisfying $\{0\}\ne L\cap C \subset  {\rm bd}  C$, the set  $L\cap C$ is a ray;
	
	(ii) If $x$ and $y$  are two non-zero vectors in $ {\rm bd}  C$ satisfying $N_C(x) = N_C(y)$, then $x$ and $y$ are contained in the same ray.
\end{proposition}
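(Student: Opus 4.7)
The plan is to handle (i) by contradiction, reducing it to Proposition~\ref{strictly_convex_cone_0}(ii), and then to derive (ii) by constructing a suitable linear subspace $L$ to which (i) applies.

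For part (i), I will assume that $L\cap C$ is not a ray. Since $L\cap C$ is a pointed convex cone (being contained in the pointed regular cone $C$) and is not reduced to $\{0\}$, the remaining possibility is that it contains two rays in different directions; that is, there exist $u,v\in (L\cap C)\setminus\{0\}$ with $v\ne\lambda u$ for every $\lambda>0$. Strict convexity of $C$, via Proposition~\ref{strictly_convex_cone_0}(ii), then forces $(u,v)\subset{\rm int}\,C$. But $u,v\in L$ and $L$ is a linear subspace, hence $(u,v)\subset L$. Consequently $L\cap{\rm int}\,C\ne\emptyset$, contradicting the hypothesis $L\cap C\subset{\rm bd}\,C$. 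Therefore $L\cap C$ must be a ray.

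For part (ii), set $N:=N_C(x)=N_C(y)$ and let
$$L:=\{u\in X:\langle w,u\rangle=0\text{ for every }w\in N\},$$
i.e., the orthogonal complement of ${\rm span}(N)$. I will first verify $x,y\in L$: from the proof of Proposition~\ref{regular_cone}(i), every $w\in N_C(x)$ satisfies $\langle w,x\rangle=0$, and analogously for $y$. Next I will show $L\cap C\subset{\rm bd}\,C$. Since $x$ is a nonzero boundary point of the regular cone $C$, a supporting hyperplane at $x$ produces a nonzero vector $w\in N\subset C^\circ\setminus\{0\}$; for any hypothetical $u\in L\cap{\rm int}\,C$, \cite[Proposition~1.1.15]{Auslender_Teboulle_2003} yields $\langle w,u\rangle<0$, whereas $u\in L$ forces $\langle w,u\rangle=0$, a contradiction. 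Combined with $x\in L\cap C\setminus\{0\}$, this gives $\{0\}\ne L\cap C\subset{\rm bd}\,C$, so part (i) applies and $L\cap C$ is a ray. Since $x$ and $y$ are nonzero elements of this ray, they must lie on the same ray.

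The main obstacle I expect is the initial step in (i), namely justifying that a nontrivial pointed convex cone which is not a ray necessarily contains two vectors $u,v$ with $v\ne\lambda u$ for every $\lambda>0$. The key observation is that a convex subcone of a pointed cone contained in a line through the origin can only be $\{0\}$ or a ray (it cannot be the full line, as that would violate pointedness), so any non-ray, nontrivial convex subcone must have linear span of dimension at least two and hence contains two linearly independent directions. Once this is in place, both parts are short.
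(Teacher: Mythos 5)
Your proposal is correct. Part (i) is essentially the paper's own argument: pick two non-proportional nonzero vectors in $L\cap C$, apply Proposition~\ref{strictly_convex_cone_0}(ii) to place the open segment in ${\rm int}\,C$, and contradict $L\cap C\subset{\rm bd}\,C$; your extra justification that a nontrivial pointed convex cone which is not a ray must contain two such vectors is a welcome (if routine) addition the paper leaves implicit. Part (ii), however, takes a genuinely different route. The paper argues directly: if $x$ and $y$ were not on the same ray, strict convexity via \eqref{strictly_Convex_interior} would put $(x+y)/2$ in ${\rm int}\,C$, while orthogonality of $(x+y)/2$ to a nonzero vector of the common normal cone forces $(x+y)/2\in{\rm bd}\,C$ by \cite[Proposition~1.1.15]{Auslender_Teboulle_2003} --- a contradiction. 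You instead reduce (ii) to (i) by forming $L=({\rm span}\,N_C(x))^\perp$, checking $x,y\in L$ and $\{0\}\ne L\cap C\subset{\rm bd}\,C$, and concluding that $L\cap C$ is a single ray containing both points. Both arguments rest on the same two ingredients (the orthogonality $\langle w,x\rangle=0$ for $w\in N_C(x)$ and the strict-interior characterization of strict convexity), so neither is more general; the paper's is slightly shorter, while yours makes the logical dependence of (ii) on (i) explicit and avoids invoking the midpoint trick a second time. One cosmetic point: you write $w\in N\subset C^\circ\setminus\{0\}$, but $0\in N$ since $N$ is a cone; you mean $w\in N\setminus\{0\}\subset C^\circ$.
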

\begin{proof}

	(i) If $L\cap C$ is not a ray, then there exist non-zero vectors  $x,y\in L\cap C$ with $y\ne \lambda x$ for all $\lambda>0$. By \eqref{strictly_Convex_interior}, $(x,y)\subset {\rm int}C$, impyling that $L\cap {\rm int}C\ne\emptyset$. This contradicts the assumption.

	(ii) Suppose that $x$ and $y$  are two non-zero vectors in $ {\rm bd}  C$ satisfying the condition $N_C(x) = N_C(y)$. If $x$ and $y$ are not contained in the same ray, then by \eqref{strictly_Convex_interior}, we have $(x,y)\subset {\rm int}C$. In particular, $(x+y)/2\in {\rm int}C$. On the other hand, taking any $z\in N_C(x)$, we get $\langle z,x\rangle=\langle z,y\rangle=0$. It follows that $\langle z,(x+y)/2\rangle=0$. Hence, by applying \cite[Propostition 1.1.15]{Auslender_Teboulle_2003}, we obtain $(x+y)/2\in {\rm bd}C$, which is a  contradiction.  $\hfill\Box$
\end{proof}

\begin{proposition} \label{strictly_convex_cone_1}
	Assume that the cone $C\subset X$ is regular and that its polar cone $C^0$ is  strictly convex. If $x$ is a non-zero vector in $ {\rm bd}  C$, then $N_C(x)$ is a ray. $\hfill\Box$

\end{proposition}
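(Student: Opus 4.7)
The plan is to apply Proposition \ref{strictly_convex_cone}(i) to the strictly convex polar cone $C^\circ$ with a well-chosen linear subspace, after rewriting $N_C(x)$ as the intersection of $C^\circ$ with a hyperplane. The natural choice is the hyperplane $L := \{y \in X : \langle y, x \rangle = 0\}$, which is a linear subspace of dimension $\dim X - 1$ since $x \ne 0$.

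First I would verify the identity $N_C(x) = L \cap C^\circ$. The inclusion ``$\subset$'' follows from Proposition \ref{regular_cone}(i): any $y \in N_C(x)$ satisfies $\langle y, x \rangle = 0$ (so $y \in L$) and lies in $\mathrm{bd}\, C^\circ \subset C^\circ$. For the reverse inclusion, any $y \in L \cap C^\circ$ satisfies $\langle y, u\rangle \le 0$ for all $u \in C$ and $\langle y, x\rangle = 0$, whence $\langle y, u - x\rangle \le 0$ for all $u \in C$, i.e. $y \in N_C(x)$.

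Next I would verify the hypotheses required by Proposition \ref{strictly_convex_cone}(i). Since $C$ is regular, Proposition \ref{regular_cone}(i) gives that $C^\circ$ is regular as well, and since $C^\circ$ is assumed strictly convex, Proposition \ref{strictly_convex_cone}(i) is applicable to $C^\circ$. To apply it with the subspace $L$, I need $\{0\} \ne L \cap C^\circ \subset \mathrm{bd}\, C^\circ$. The inclusion $L \cap C^\circ \subset \mathrm{bd}\, C^\circ$ follows from \cite[Proposition 1.1.15]{Auslender_Teboulle_2003}: if some $y \in L \cap \mathrm{int}\, C^\circ$ existed, then since $x \in C \setminus \{0\}$ we would have $\langle y, x \rangle < 0$, contradicting $y \in L$. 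Nonemptiness beyond zero follows because $x \in \mathrm{bd}\, C$ admits a nonzero supporting functional, i.e.\ $N_C(x) \ne \{0\}$, and by the identity above $L \cap C^\circ = N_C(x)$.

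Having checked these conditions, Proposition \ref{strictly_convex_cone}(i) applied to the strictly convex cone $C^\circ$ and the subspace $L$ yields that $L \cap C^\circ$ is a ray, and hence $N_C(x)$ is a ray. No serious obstacle is expected: the argument is essentially a bookkeeping reduction to Proposition \ref{strictly_convex_cone}(i), with the only subtlety being the need to correctly identify $N_C(x)$ with $L \cap C^\circ$ and to confirm that this intersection lies on the boundary of $C^\circ$.
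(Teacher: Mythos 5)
Your proof is correct. You reduce the statement to Proposition \ref{strictly_convex_cone}(i) by identifying $N_C(x)$ with $x^\perp\cap C^\circ$, whereas the paper argues directly: it invokes \eqref{normal_cone_form} to place $N_C(x)$ inside ${\rm bd}\,C^\circ$, then supposes $N_C(x)$ contains two non-zero, non-proportional vectors $u,v$, and uses the strict convexity of $C^\circ$ via \eqref{strictly_Convex_interior} together with the convexity of $N_C(x)$ to land $(u,v)$ simultaneously in ${\rm int}\,C^\circ$ and in ${\rm bd}\,C^\circ$, a contradiction. The two arguments rest on exactly the same mechanism --- indeed the paper's proof of Proposition \ref{strictly_convex_cone}(i) is word-for-word the same contradiction with $L\cap C$ in place of $N_C(x)$ --- so your version is a repackaging rather than a new idea. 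What your route buys is modularity (the boundary-intersection lemma is quoted rather than reproved), at the cost of two extra verifications the paper does not need: the identity $N_C(x)=x^\perp\cap C^\circ$ and the non-triviality $N_C(x)\ne\{0\}$, the latter requiring the supporting-hyperplane theorem at the boundary point $x$. Both of those steps are carried out correctly in your write-up, so there is no gap.
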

\begin{proof}
	Since $C$ is regular, invoking \eqref{normal_cone_form}, we derive $N_C(x)\subset {\rm bd}C^\circ$. If $N_C(x)$ is not a ray, then there exist non-zero vectors  $u,v\in N_C(x)$ with $v\ne \lambda u$ for all $\lambda>0$. Since $C^\circ$ is strictly convex, by \eqref{strictly_Convex_interior}, we get $(u,v)\subset {\rm int}C^\circ$. Since $N_C(x)$ is convex, we have $(u,v)\subset N_C(x)$. Therefore, $N_C(x)\cap {\rm int}C^\circ\ne\emptyset$, contradicting the fact that $N_C(x)\subset {\rm bd}C^\circ$. So, $N_C(x)$ is a ray.  $\hfill\Box$
\end{proof}

The rest of this section focuses on studying properties of smooth cones. An important characterization of smooth cones is stated as follows.

\begin{proposition} \label{smooth_cone_then_strictly_convex}
	A cone $C$ in a Euclidean space $X$ is  smooth if and only if it is strictly convex and the normal cone of $C$ at every point of its boundary (except the origin) has dimension one.
\end{proposition}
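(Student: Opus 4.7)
My plan is to prove the two implications separately, with Proposition \ref{strictly_convex_cone_0}(i) -- which characterizes strict convexity through rays lying on the boundary -- as the principal tool on both sides.

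For the forward direction, I would assume $C$ is smooth. To verify strict convexity via Proposition \ref{strictly_convex_cone_0}(i), let $\ell \subset  {\rm bd}  C$ be an arbitrary ray; picking a non-zero $x_0 \in \ell$, the definition of smoothness places $x_0$ on some extreme ray $\ell'$. Since a ray is completely determined by any one of its non-zero points (both $\ell$ and $\ell'$ equal $\{\lambda x_0 : \lambda \ge 0\}$), we get $\ell = \ell'$, so $\ell$ is extreme. The dimension condition on normal cones is then immediate: any $x \in  {\rm bd}  C \setminus \{0\}$ lies on an extreme ray by smoothness, so $\dim N_C(x) = 1$ directly from the second clause of the smoothness definition.

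For the reverse direction, assume $C$ is strictly convex and $\dim N_C(x) = 1$ for every $x \in  {\rm bd}  C \setminus \{0\}$. To show that every boundary point lies on an extreme ray, take $x \in  {\rm bd}  C$ with $x \neq 0$ and consider the ray $\ell = \{\lambda x : \lambda \ge 0\}$; this ray lies entirely in $ {\rm bd}  C$, since if some $\lambda x \in {\rm int}\,C$ with $\lambda > 0$ then $x = (1/\lambda)(\lambda x) \in {\rm int}\,C$, a contradiction. Proposition \ref{strictly_convex_cone_0}(i) then makes $\ell$ an extreme ray. The origin lies trivially on any extreme ray, and regularity of $C$ ensures at least one such ray exists. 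For the remaining clause, let $\ell$ be any extreme ray and $x \in \ell \setminus \{0\}$; since $\ell$ is a proper face of $C$, \cite[Lemma 2.6]{Barker_1973} (already used earlier in the paper) forces $\ell \subset  {\rm bd}  C$, so $x \in  {\rm bd}  C \setminus \{0\}$ and the hypothesis immediately gives $\dim N_C(x) = 1$.

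I anticipate no substantive obstacle: the argument is essentially a careful unpacking of the two definitions once Proposition \ref{strictly_convex_cone_0}(i) is in hand. The only points requiring mild care are the separate handling of the origin when showing every boundary point lies on an extreme ray, and the observation that a positive scalar multiple of a boundary point cannot land in the interior -- both dispensed with by elementary cone arithmetic.
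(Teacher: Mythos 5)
Your argument for the forward direction is essentially the paper's: both deduce that any boundary ray is extreme because it is the unique ray through any of its non-zero points, then invoke Proposition \ref{strictly_convex_cone_0}(i) for strict convexity, and read off the normal-cone dimension from the smoothness definition. The reverse direction, which the paper dismisses as straightforward and omits, you spell out correctly (boundary rays stay in the boundary under positive scaling, hence are extreme by Proposition \ref{strictly_convex_cone_0}(i), and \cite[Lemma 2.6]{Barker_1973} places every extreme ray in the boundary so the dimension hypothesis applies), so the proposal is complete and in line with the paper's intent.
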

\begin{proof} Assume that the cone $C$ is smooth. Let $\ell$ be any ray on the boundary of $C$. Then, there exists a non-zero vector $x\in {\rm bd}C$ such that $\ell=\{\lambda\,x\,: \lambda\ge0\}$. Since $C$ is smooth and $\ell$ is the only ray containing $x$, it follows that $\ell$ is an extreme ray. By invoking Proposition \ref{strictly_convex_cone_0}(i), we deduce that $C$ is strictly convex. 
	
	Now, let $x$ be any non-zero vector on the boundary of $C$. Since $C$ is strictly convex, invoking  Proposition \ref{strictly_convex_cone_0}(i) again, we conclude that the ray $\ell=:\{\lambda\,x\,: \lambda\ge0\}$ is an extreme ray. Thus, $N_C(x)$ has dimension one due to the fact that the cone $C$ is smooth.
	
	The reverse implication is straightforward, and its proof is therefore omitted.   $\hfill\Box$
\end{proof}

\begin{remark} It follows directly from the above proposition that every smooth cone is strictly convex.
\end{remark}

A further relationship between smooth cones and strictly convex cones is given as follows.

\begin{proposition} \label{smooth_cone_then_strictly_convex_1}
	A cone $C$ in a Euclidean space $X$ is  smooth if and only if both  $C$ and $C^\circ$ are strictly convex.
\end{proposition}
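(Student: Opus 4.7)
The plan is to prove the equivalence by combining Propositions \ref{smooth_cone_then_strictly_convex} and \ref{strictly_convex_cone_1} with a duality argument that exploits the polar relationship $(C^\circ)^\circ = C$.

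For the reverse direction, the work is essentially already done. Assume both $C$ and $C^\circ$ are strictly convex. To apply Proposition \ref{smooth_cone_then_strictly_convex}, I only need to verify that $N_C(x)$ has dimension one for every $x \in {\rm bd}\,C \setminus \{0\}$. Since $C$ is regular (being strictly convex) and $C^\circ$ is strictly convex, Proposition \ref{strictly_convex_cone_1} immediately gives that $N_C(x)$ is a ray, hence one-dimensional. So $C$ is smooth.

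For the forward direction, I assume $C$ is smooth; by Proposition \ref{smooth_cone_then_strictly_convex} (or the Remark following it), $C$ is strictly convex, so the only remaining task is to show $C^\circ$ is strictly convex. I would use the characterization in Proposition \ref{strictly_convex_cone_0}(ii): pick arbitrary non-zero $u, v \in C^\circ$ with $v \neq \lambda u$ for all $\lambda > 0$, and argue by contradiction that the open segment $(u,v)$ lies entirely in ${\rm int}\,C^\circ$. If some $w = (1-t)u + tv$ with $0<t<1$ lies on ${\rm bd}\,C^\circ$, then since $C^\circ$ is regular, Proposition \ref{regular_cone}(i) applied to $C^\circ$ (using $(C^\circ)^\circ = C$) produces a vector $y \in {\rm bd}\,C \setminus \{0\}$ with $\langle y, w\rangle = 0$ and $\langle y, z\rangle \leq 0$ for all $z \in C^\circ$. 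Testing against $u$ and $v$ and using the convex combination $\langle y, w\rangle = 0$ forces $\langle y, u\rangle = \langle y, v\rangle = 0$, which in turn shows $u, v \in N_C(y)$.

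The key step is now to invoke the smoothness of $C$: by Proposition \ref{smooth_cone_then_strictly_convex}, $N_C(y)$ has dimension one, so $u$ and $v$ must be collinear. Since $C^\circ$ is pointed, the possibility $v = -\lambda u$ with $\lambda > 0$ is excluded, leaving only $v = \lambda u$ with $\lambda > 0$, which contradicts the choice of $u,v$. Hence $(u,v) \subset {\rm int}\,C^\circ$, and Proposition \ref{strictly_convex_cone_0}(ii) yields that $C^\circ$ is strictly convex.

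The main obstacle I anticipate is the forward direction, specifically the bookkeeping of showing that the separating normal $y$ obtained at $w$ simultaneously kills both endpoints $u$ and $v$ and lies in $N_C(y)$ in the sense needed, so that the one-dimensionality granted by smoothness can be applied to $y$ rather than to $w$. Once that duality swap is in place, pointedness of $C^\circ$ cleanly closes the argument.
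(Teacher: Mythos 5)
Your proposal is correct, and while the reverse implication coincides exactly with the paper's (Proposition \ref{strictly_convex_cone_1} gives that each boundary normal cone is a ray, then Proposition \ref{smooth_cone_then_strictly_convex} applies), the forward implication follows a genuinely different route. The paper works at the level of faces: it takes a ray $\ell\subset {\rm bd}\, C^\circ$, forms the face of $C^\circ$ generated by a point $\bar x\in\ell$ via Barker's lemma, shows this face is contained in $N_C(\bar y)$ for a non-zero $\bar y\in N_{C^\circ}(\bar x)$, concludes from smoothness that $\ell$ equals that face, and then still needs a second step (a relative-interior argument with \cite[Corollary 18.1.2]{Rocka}) to pass from ``every boundary ray is a face'' to ``every proper face is one-dimensional.'' You instead verify the segment characterization of Proposition \ref{strictly_convex_cone_0}(ii) directly: a boundary point $w$ of the open segment $(u,v)$ yields, via Proposition \ref{regular_cone}(i) applied to $C^\circ$, a non-zero $y\in {\rm bd}\, C$ with $\langle y,w\rangle=0$ and $\langle y,\cdot\rangle\le 0$ on $C^\circ$; the sign argument forces $\langle y,u\rangle=\langle y,v\rangle=0$, hence $u,v\in N_C(y)$, and smoothness collapses $u$ and $v$ onto a single ray, with pointedness of $C^\circ$ ruling out opposite directions. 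The underlying duality mechanism (a normal at a boundary point of $C^\circ$ lies on ${\rm bd}\, C$ and has a one-dimensional normal cone by smoothness) is the same in both proofs, but your version avoids Barker's face-generation lemma and the extra relative-interior step, so it is more self-contained given the propositions already established. One small detail to insert: before applying Proposition \ref{regular_cone}(i) to $w$ you must check $w\ne 0$; this follows from pointedness of $C^\circ$, since $w=(1-t)u+tv=0$ would give $v=-\frac{1-t}{t}u$ and hence $u\in C^\circ\cap(-C^\circ)=\{0\}$, contradicting $u\ne 0$.
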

\begin{proof} Assume that the cone $C$ is smooth. 	Clearly, $C$ is strictly convex. Next, we will show that $C^\circ$ is strictly convex.
	
	Since $C$ is regular, by Proposition \ref{regular_cone}(i), we have $C^\circ$ is regular.
	
	We now show that every ray in $ {\rm bd}  C^\circ$ is  a face of $C^\circ$. Let $\ell\subset  {\rm bd}  C^\circ$ be a ray. Then, there exists a vector $\bar x \in  {\rm bd}  C^\circ\setminus \{0\}$ such that $\ell=\{\lambda \bar x:\, \lambda\ge0\}$. It follows from \cite[Lemma 2.8]{Barker_1973} that the set $$\Phi(\bar    x):=\{x\in C^\circ:\, \exists \lambda>0,\bar x-\lambda x\in C\}$$
	is a face of $C$ (generated by $\bar x$). It is clear that $\ell\subset \Phi(\bar x)$.
	
	Moreover, for any $x\in \Phi(\bar x)$, there exists $\lambda>0$ such that $\bar x-\lambda x\in C^\circ$. Since $\bar x\in  {\rm bd}  C^\circ$, we get $N_{C^\circ}(\bar x)\ne \{0\}$. Hence, there exists a vector $\bar y\in N_{C^\circ}(\bar x)\setminus\{0\}$, and  we have 
	\begin{equation*}
	\langle \bar y, (\bar x-\lambda x)-\bar x\rangle=-\lambda\langle \bar y,x\rangle\le 0. 
	\end{equation*}
	This yields $\langle \bar y,x\rangle\ge 0$. Thus, noting that $x\in C^0$, we derive 
	$$\langle x,y-\bar y\rangle=\langle x,y\rangle-\langle x,\bar y\rangle\le 0\quad \forall y\in C.$$
	This implies that $x\in N_C(\bar y)$, yielding $\Phi(\bar x)\subset N_C(\bar y)$.
	Additionally, since $C$ is smooth, $N_C(\bar y)$ is a ray. Combining this with the fact that $\ell\subset \Phi(\bar x)$, we can deduce that  $\ell=\Phi(\bar x)(=N_C(\bar y))$. So, $\ell$ is a face. 
	
	Next, let $\mathcal{F}\subset C^\circ$ be any non-zero and proper face of $C^\circ$. By \cite[Lemma 2.6]{Barker_1973}, we have $\mathcal{F}\subset  {\rm bd}  C^\circ$. Since $\mathcal{F}$ is convex, there exists a vector $x\in {\rm ri}\,\mathcal{F}$. Clearly, $x\in  {\rm bd}C\setminus \{0\}$. Therefore, the ray $\ell_x:=\{\lambda x:\,\lambda \ge0\}$ lies on  $ {\rm bd} C^\circ$. It follows that $\ell_x$ is a face of $C^\circ$.  Furthermore, since $x\in {\rm ri}\ell_x$, by \cite[Corollary 18.1.2]{Rocka}, we obtain $\mathcal{F}=\ell_x$. This means that ${\rm dim}\mathcal{F}=1$. So, $C^\circ$ is strictly convex.
	
	Now, assume that  $C$ and $C^\circ$ are strictly convex. For any non-zero vector $x\in {\rm bd}C$, by Proposition \ref{strictly_convex_cone_1}, we get the normal cone $N_{C}(x)$ is a ray. This means that it has dimension one. Consequently, it follows from Proposition \ref{smooth_cone_then_strictly_convex} that $C$ is smooth. 
	
	The proof is complete. $\hfill\Box$
\end{proof}

The following result is a direct consequence of Proposition \ref{smooth_cone_then_strictly_convex_1}.
\begin{corollary} \label{smooth_cone_and_polar}
	Assume that $C$ is a smooth cone in a Euclidean space $X$. Then, its polar cone $C^\circ$ is also smooth. 
\end{corollary}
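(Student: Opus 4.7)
The plan is to deduce this corollary immediately from Proposition \ref{smooth_cone_then_strictly_convex_1} combined with the bipolar identity. First, since $C$ is smooth, Proposition \ref{smooth_cone_then_strictly_convex_1} yields that both $C$ and $C^\circ$ are strictly convex. Moreover, a smooth cone is regular by Definition \ref{strictlyconvex_smooth_definition}(ii), so Proposition \ref{regular_cone}(i) guarantees that $C^\circ$ is itself regular.

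Next, I would invoke the bipolar identity $(C^\circ)^\circ = C$, which is valid because $C$ is closed and convex (as noted in Section 2 just after the definition of the polar cone). Consequently, applying Proposition \ref{smooth_cone_then_strictly_convex_1} to the cone $C^\circ$ requires checking that both $C^\circ$ and $(C^\circ)^\circ = C$ are strictly convex; both of these have already been established. This gives that $C^\circ$ is smooth.

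There is no real obstacle here; the corollary is essentially a symmetry statement in which the two conditions appearing in Proposition \ref{smooth_cone_then_strictly_convex_1} are symmetric in $C$ and $C^\circ$ once the bipolar identity is used. The only small point to be careful about is making sure $C^\circ$ is regular before applying Proposition \ref{smooth_cone_then_strictly_convex_1} to it, but this is already recorded in Proposition \ref{regular_cone}(i).
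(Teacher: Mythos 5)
Your proposal is correct and follows exactly the route the paper intends: the paper states the corollary as a direct consequence of Proposition \ref{smooth_cone_then_strictly_convex_1}, and your argument (apply the proposition to $C$ to get that $C$ and $C^\circ$ are strictly convex, then apply it to $C^\circ$ using the bipolar identity $(C^\circ)^\circ=C$) is precisely that deduction. Your extra remark on the regularity of $C^\circ$ is harmless but not strictly needed, since strict convexity already presupposes regularity by Definition \ref{strictlyconvex_smooth_definition}(i).
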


To conclude this section, we will prove the claim made in the previous section.
\begin{corollary} \label{smooth_cone_example}
	Both the $p$-cone defined in \eqref{p_cones} and the circular cone  defined in \eqref{circular_cones} are smooth.
\end{corollary}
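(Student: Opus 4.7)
The plan is to reduce both assertions to Proposition \ref{smooth_cone_then_strictly_convex_1}, which tells us that a regular cone is smooth precisely when it and its polar are strictly convex. For each family, strict convexity of the primal cone has already been established (see the reference to \cite{Ito_Luoenco_2017} for $\mathcal{K}_p$ just after \eqref{p_cones}, and Remark \ref{circular_cone_sc} for $\mathcal{L}_\theta$), so in each case the only remaining task is to verify strict convexity of the polar cone.

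For the $p$-cone $\mathcal{K}_p$, I would first identify its polar explicitly. Using H\"older's inequality with conjugate exponent $q=p/(p-1)\in(1,+\infty)$, one checks that
\begin{equation*}
\mathcal{K}_p^\circ=\Big\{(s,v)\in\mathbb{R}\times\mathbb{R}^{m-1}:s\le -\|v\|_q\Big\}=-\mathcal{K}_q.
\end{equation*}
The inclusion $-\mathcal{K}_q\subset\mathcal{K}_p^\circ$ is immediate from $ts+u^Tv\le ts+\|u\|_p\|v\|_q$, while the reverse uses the fact that H\"older's inequality is sharp: for a given $v\ne 0$ one exhibits a unit $p$-norm vector $u$ attaining $u^Tv=\|v\|_q$ and then tests the inner product against $(\|u\|_p,u)\in\mathcal{K}_p$. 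Since $\mathcal{K}_q$ is itself a $p$-cone and therefore strictly convex, so is its negation $-\mathcal{K}_q=\mathcal{K}_p^\circ$. Proposition \ref{smooth_cone_then_strictly_convex_1} then yields that $\mathcal{K}_p$ is smooth; in particular, the second-order cone $\mathcal{K}_2$ is smooth.

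For the circular cone $\mathcal{L}_\theta$, I would exploit the representation $\mathcal{L}_\theta=M^{-1}(\mathcal{K}_2)$ with symmetric positive definite $M$ used in Remark \ref{circular_cone_sc}. Because $M$ is symmetric and invertible, a direct computation gives
\begin{equation*}
\mathcal{L}_\theta^\circ=M(\mathcal{K}_2^\circ)=-M(\mathcal{K}_2),
\end{equation*}
using the already-established identity $\mathcal{K}_2^\circ=-\mathcal{K}_2$. It remains to show that the linear image $M(\mathcal{K}_2)$ is strictly convex; this is just the argument of Remark \ref{circular_cone_sc} run in the forward direction: given nonzero $Mx,My\in M(\mathcal{K}_2)$ with $My\ne\lambda Mx$ for every $\lambda>0$, invertibility of $M$ transfers the hypothesis to $x,y\in\mathcal{K}_2$, strict convexity of $\mathcal{K}_2$ and Proposition \ref{strictly_convex_cone_0}(ii) give $(x,y)\subset\mathrm{int}\,\mathcal{K}_2$, and applying $M$ yields $(Mx,My)\subset\mathrm{int}\,M(\mathcal{K}_2)$. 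Negation preserves strict convexity, so $\mathcal{L}_\theta^\circ$ is strictly convex, and Proposition \ref{smooth_cone_then_strictly_convex_1} delivers smoothness of $\mathcal{L}_\theta$.

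The main obstacle is the explicit identification $\mathcal{K}_p^\circ=-\mathcal{K}_q$; everything else is a mechanical application of the machinery already built up in the section. Once that polar calculation is in hand, both parts of the corollary follow by combining it with Remark \ref{circular_cone_sc} and Proposition \ref{smooth_cone_then_strictly_convex_1}.
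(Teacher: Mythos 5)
Your proposal is correct and follows essentially the same route as the paper: both reduce to Proposition \ref{smooth_cone_then_strictly_convex_1} by checking strict convexity of each cone and its polar, with the paper simply asserting $(\mathcal{K}_p)^\circ=-\mathcal{K}_q$ (where you supply the H\"older argument, and correctly use $\tfrac{1}{p}+\tfrac{1}{q}=1$ rather than the paper's typo $=2$) and citing $(\mathcal{L}_\theta)^\circ=-\mathcal{L}_{\frac{\pi}{2}-\theta}$ from Zhou--Chen where you instead write the polar as $-M(\mathcal{K}_2)$ and rerun the Remark \ref{circular_cone_sc} argument forward. These are cosmetic differences, not a different method.
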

\begin{proof} Note that  $(\mathcal{K}_p)^\circ=-\mathcal{K}_q$, where $q$ is a positive number satisfying $\frac{1}{p}+\frac{1}{q}=2$. Hence, both $\mathcal{K}_p$ and $(\mathcal{K}_p)^\circ$ are strictly convex. By applying Proposition \ref{smooth_cone_then_strictly_convex_1}, we obtain $\mathcal{K}_p$ is smooth.
	
	It follows from \cite[Theorem 2.1(c)]{Zho_Chen_2013} that $(\mathcal{L}_\theta)^\circ=-\mathcal{L}_{\frac{\pi}{2}-\theta}$. Thus, both $\mathcal{L}_\theta$ and $(\mathcal{L}_\theta)^\circ$ are strictly convex, due to  Remark \ref{circular_cone_sc}. Applying  Proposition \ref{smooth_cone_then_strictly_convex_1} again, we get $\mathcal{L}_\theta$ is smooth. $\hfill\Box$
\end{proof}
\section{Local Error Bounds for Smooth Cone-Convex Inclusions}
In this section, we assume that $K$ is a smooth cone in $\mathbb{R}^m$ and that $h: \mathbb{R}^n\to\mathbb{R}^m$ is a continuously differentiable and $K$-concave function. Recall that $S$ is assumed to be the non-empty solution set of \eqref{convex_cone_inclusion_pre} defined in \eqref{sigma_define_pre}. We will establish the equivalence between local error bounds and the ACQ around the reference point for \eqref{convex_cone_inclusion_pre}. Before stating and proving the main result of this section, we need several auxiliary results.

\begin{lemma}\label{Ker_normal_cone} Under the above assumptions about the cone $K$ and the function $h$, we have
	\begin{equation*}\label{Kernel_intersection_cone}
	{\rm Ker}h'(x)^*\cap K^\circ={\rm Ker}h'(u)^*\cap K^\circ\,
	\end{equation*} 
	for every $ x,u\in  S $ satisfying $h(x)=h(u)=0$.
\end{lemma}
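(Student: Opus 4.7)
The plan is to exploit the differentiable characterization of $K$-concavity, which reduces the lemma to an elementary application of Fermat's rule. First, I would derive the gradient inequality
\[
h(w) + h'(w)(z-w) - h(z) \in K \qquad \forall\, w,z \in \mathbb{R}^n.
\]
Starting from the defining inclusion $h[w+\lambda(z-w)] - (1-\lambda)h(w) - \lambda h(z) \in K$ for $\lambda\in(0,1]$, rearranging to $[h(w+\lambda(z-w))-h(w)]/\lambda - [h(z)-h(w)] \in (1/\lambda)K = K$, and letting $\lambda\downarrow 0$, the inclusion follows from the differentiability of $h$, the closedness of $K$, and the fact that $K$ is a cone.

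Next, I would fix an arbitrary $y \in {\rm Ker}\,h'(x)^* \cap K^\circ$ and pair the gradient inequality at $w=x$ with $y$. Since $y\in K^\circ$, this yields
\[
\langle y,h(x)\rangle + \langle h'(x)^* y,\, z-x\rangle - \langle y,h(z)\rangle \le 0 \qquad \forall\, z\in\mathbb{R}^n,
\]
which, upon using $h(x)=0$ and $h'(x)^* y=0$, collapses to $\langle y,h(z)\rangle \ge 0$ for every $z\in\mathbb{R}^n$.

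Consequently the scalar $C^1$ function $F(z) := \langle y,h(z)\rangle$ is globally nonnegative on $\mathbb{R}^n$, and $F(u) = \langle y, h(u)\rangle = 0$ because $h(u)=0$. Thus $u$ is a global minimizer of $F$, and Fermat's rule gives $0 = \nabla F(u) = h'(u)^* y$, so $y\in {\rm Ker}\,h'(u)^*\cap K^\circ$. The reverse inclusion follows by interchanging the roles of $x$ and $u$ in the argument.

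I do not anticipate any serious obstacle; the only genuinely technical step is the derivation of the gradient inequality, and the rest is a one-line application of Fermat's rule. It is worth noting that the argument never invokes smoothness of $K$: it uses only that $K$ is a closed convex cone and that $h$ is differentiable and $K$-concave.
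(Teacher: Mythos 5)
Your proof is correct and follows essentially the same route as the paper: both arguments scalarize with a vector $y\in {\rm Ker}\,h'(x)^*\cap K^\circ$, observe that $x$ is a global minimizer of the convex function $z\mapsto\langle y,h(z)\rangle$ (your gradient inequality paired with $y$ is exactly the first-order convexity inequality for this function), and conclude by Fermat's rule at $u$. Your version is marginally more direct, since $\langle y,h(u)\rangle=0$ follows at once from $h(u)=0$, whereas the paper detours through the support-function formula for $d(h(\cdot),K)$ to show that every point of $S$ minimizes the scalarized function; and your closing observation is accurate --- neither argument uses smoothness of $K$, only that $K$ is a closed convex cone.
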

\begin{proof} Fix any  $x\in S$ and $u\in S$ satisfying $h(x)=h(u)=0$. 
	By the symmetric role of $x$ and $u$, we need only prove that 
	\begin{equation}\label{Kernel_intersection_cone_inclusion}
	{\rm Ker}\,h'(x)^*\cap K^\circ\subset{\rm Ker}\,h'(u)^*\cap K^\circ.
	\end{equation}  
	Take any vector $w\in {\rm Ker}\,h'(x)^*\cap K^\circ$. If $w=0$ then $w\in {\rm Ker}\,h'(u)^*\cap K^\circ$. If $w\ne0$,  then 
	$\bar w:=w/||w||\in{\rm Ker}\,h'(x)^*\cap K^\circ.$ Hence, $h'(x)^* \bar w=0$ and $\bar w\in K^\circ  \cap\bar  B_{\mathbb{R}^m}$. 
	
	Now, utilizing the arguments in the proof of \cite[Proposition 1]{Burke_Deng_2009}, we consider the function $\varphi (z)=\langle \bar w,h(z)\rangle$ for every $z\in \mathbb{R}^n$. It follows from \cite[Lemma 2]{Burke_Deng_2009} that $\varphi$ is convex. Moreover, $\varphi'(x)=h'(x)^*\bar w=0$. This implies that $x$ is a global solution of the problem $\displaystyle\min\{\varphi(z):\,z\in\mathbb{R}^n\}$. If there exists a point $\hat z\in S$ satisfying $\varphi(\hat z)>\varphi(x)$, then, by noting that $\bar w\in \bar B_{\mathbb{R}^m}\cap K^\circ$, we obtain
	\begin{align*}
	d(h(\hat z),K) &=\sup\big\{\langle v,h(\hat z)\rangle-{\rm supp}_K(v):\, \|v\|\le1\big\} \\
	&\ge \langle \bar w,h(\hat z)\rangle -{\rm supp}_K(\bar w)\\
	&> \langle \bar w,h(x)\rangle -{\rm supp}_K(\bar w)\\
	&=d(h(x),K) \\
	&=0,
	\end{align*}
	where ${\rm supp}_K(\cdot)$ denotes the support function of $K$, the first equation follows from \cite[Theorem A.1, Part 2]{Burke_Deng_2002}, and the second equation follows from \cite[Theorem A.1, Part 5]{Burke_Deng_2002}. This contradicts the fact that $\hat z\in S$. Thus, $S$ belongs to the solution set of the problem $\displaystyle\min\{\varphi(z):\,z\in\mathbb{R}^n\}$. This implies that $h'(y)^*\bar w=0$ for every $y\in S $. In particular, $h'(u)^*\bar w=0$, yielding $h'(u)^* w=0$. Therefore, $w\in {\rm Ker}\,h'(u)^*\cap K^\circ$. So, the inclusion \eqref{Kernel_intersection_cone_inclusion} holds. 	 $\hfill\Box$
\end{proof}

\begin{lemma}\label{lsc_of_multifunction}
	Assume that $T:X\to  Y$ is a linear mapping between Euclidean spaces  and that $C\subset X$ is a strictly convex cone satisfying ${\rm Ker}T\cap {\rm int}C\ne\emptyset$. Consider a multifunction $F: {\rm Im}T\rightrightarrows  X $ defined as 
	$$F(v)= \{u\in  X :\, Tu=v \}\cap C$$
	for every $v\in {\rm Im}T$. Then, $F$ is l.s.c. at every non-zero point $ v\in {\rm dom}F$.
	
	Moreover, if $T(C)$ is closed, then there exists a positive number $\tau$ such that
	\begin{equation}\label{cone_image_inclusion}
	T(C)\cap \bar B_{ Y}\subset \tau\, T(C\cap \bar B_X ).
	\end{equation}
\end{lemma}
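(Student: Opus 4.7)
The plan has two stages: first prove lower semicontinuity of $F$ at each non-zero $\bar v\in {\rm dom}\,F$, then use this to obtain the inclusion \eqref{cone_image_inclusion}.

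For the l.s.c.\ at $\bar v\in T(C)\setminus\{0\}$, fix an open $V\subset X$ with $F(\bar v)\cap V\ne\emptyset$ and pick $\bar u\in F(\bar v)\cap V$. Invoking the hypothesis, select $\bar z\in {\rm Ker}\,T\cap {\rm int}C$. The elementary inclusion $C+{\rm int}C\subset {\rm int}C$ (verified by thickening: if $B(\bar z,\eta)\subset C$ then $\bar u+B(\bar z,\eta)\subset C+C\subset C$) gives $\bar u+t\bar z\in {\rm int}C$ for every $t>0$. Choose $t>0$ small enough that $\bar u':=\bar u+t\bar z\in V$, and then $\eta>0$ with $B(\bar u',\eta)\subset C\cap V$; note that $T\bar u'=T\bar u+tT\bar z=\bar v$ since $\bar z\in {\rm Ker}\,T$. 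Writing $T_0$ for the linear isomorphism $T|_{({\rm Ker}\,T)^{\perp}}:({\rm Ker}\,T)^{\perp}\to {\rm Im}\,T$, the map $\Phi(v):=T_0^{-1}v+\bar u'-T_0^{-1}\bar v$ is a continuous right inverse of $T$ with $\Phi(\bar v)=\bar u'$. Hence for every $v\in {\rm Im}\,T$ sufficiently close to $\bar v$ one has $\Phi(v)\in B(\bar u',\eta)\subset C\cap V$ and $T\Phi(v)=v$, so $\Phi(v)\in F(v)\cap V$, giving the l.s.c.

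For the second assertion, assume $T(C)$ is closed; the case $T(C)=\{0\}$ is trivial, so assume otherwise. Define $\psi:T(C)\to[0,+\infty)$ by $\psi(v):=\min\{\|u\|:u\in F(v)\}$, the minimum being attained because $F(v)$ is non-empty, closed, and $\|\cdot\|$ is coercive. The plan is to establish upper semicontinuity of $\psi$ on $T(C)\setminus\{0\}$: given $\bar v\ne 0$ in $T(C)$, $\varepsilon>0$, and a minimiser $\bar u\in F(\bar v)$, applying the l.s.c.\ of $F$ with $V=B(\bar u,\varepsilon)$ produces a neighbourhood of $\bar v$ in $T(C)$ on which $\psi(v)<\psi(\bar v)+\varepsilon$. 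Since the unit sphere of $Y$ is compact and $T(C)$ is closed, the set $\Sigma:=T(C)\cap\{y\in Y:\|y\|=1\}$ is compact, so $\psi$ attains a finite supremum $\tau$ on $\Sigma$. For $v\in T(C)\cap \bar B_Y\setminus\{0\}$, pick $u'\in F(v/\|v\|)$ with $\|u'\|\le\tau$; then $u:=(\|v\|/\tau)u'\in C\cap \bar B_X$ and $\tau Tu=v$, so $v\in \tau T(C\cap \bar B_X)$, while $v=0$ is immediate.

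The main subtlety is the interior-shift step in the first stage: an arbitrary feasible $\bar u\in F(\bar v)$ may sit on ${\rm bd}\,C$, and to apply a continuous right inverse of $T$ one must first exhibit an element of $F(\bar v)$ that lies in ${\rm int}\,C$. The hypothesis ${\rm Ker}\,T\cap {\rm int}C\ne\emptyset$ is precisely what permits the translation $\bar u\mapsto \bar u+t\bar z$ to achieve this without disturbing the image under $T$. Once this interior point has been produced, the remainder of stage 1 is mechanical linear algebra, and stage 2 reduces to a positive-homogeneity argument in which the closedness of $T(C)$ is used solely to ensure compactness of the cross-section $\Sigma$ with the unit sphere.
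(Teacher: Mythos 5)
Your proof is correct, but it takes a noticeably different route from the paper's on both halves. For the lower semicontinuity, the paper assembles $F$ as the intersection of three multifunctions and invokes a general intersection theorem of Bank et al., with the key density step $(\bar u+{\rm Ker}\,T)\cap C\subset{\rm cl}\bigl[(\bar u+{\rm Ker}\,T)\cap{\rm int}\,C\bigr]$ verified via Proposition \ref{equality_of_subspaces}(i) and the strict-convexity characterization \eqref{strictly_Convex_interior}; you instead build an explicit affine continuous right inverse of $T$ through the interior point $\bar u+t\bar z$, which is self-contained and, notably, uses only ordinary convexity of the cone (the strict-convexity hypothesis plays no role in your argument, and indeed it is not truly needed in the paper's density step either, since ${\rm int}\,C\ne\emptyset$ and \cite[Theorem 6.1]{Rocka} would suffice). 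The shared essential idea is the same in both: the hypothesis ${\rm Ker}\,T\cap{\rm int}\,C\ne\emptyset$ lets one slide a boundary-feasible point into ${\rm int}\,C$ without changing its image under $T$. For the inclusion \eqref{cone_image_inclusion}, the paper argues by contradiction with a normalized sequence $v^k$, a convergent subsequence, and upper semicontinuity of $d(0,F(\cdot))$ via \cite[Proposition 5.11]{Rocka_Wets_1998}; you give the direct version of the same estimate, showing the minimal-norm selection $\psi$ is upper semicontinuous on the compact cross-section $\Sigma=T(C)\cap\{\|y\|=1\}$ (closedness of $T(C)$ entering exactly where the paper uses it, to keep the limit, respectively the cross-section, inside $T(C)$) and then concluding by positive homogeneity. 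Your version is cleaner and avoids the external citations; the only cosmetic points worth tightening are to note explicitly that $\tau>0$ because $0\notin F(v)$ for $v\ne0$, and that $T(C)$ being a cone is what places $v/\|v\|$ in $\Sigma$.
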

\begin{proof} Fix an arbitrary non-zero vector $\bar v \in {\rm dom}F$.
	Setting
	\begin{align*}
	\Gamma_1:\,\, &{\rm Im}T\hspace{0.3cm}\rightrightarrows \hspace{0.9cm} X \\
	&\hspace{0.3cm} v\hspace{0.5cm}\mapsto \{u\in  X :\, Tu=v \},
	\end{align*}
	\begin{align*}
	\Gamma_2:\,\, &{\rm Im}T\hspace{0.2cm}\rightrightarrows \hspace{0.3cm} X \\
	&\hspace{0.3cm} v\hspace{0.4cm}\mapsto \hspace{0.1cm} {\rm int}C,
	\end{align*}
	and
	\begin{align*}
	\Gamma_3:\,\, &{\rm Im}T\hspace{0.2cm}\rightrightarrows \hspace{0.3cm} X \\
	&\hspace{0.3cm} v\hspace{0.4cm}\mapsto \hspace{0.2cm} C,
	\end{align*}
	we will verify that $\Gamma_1,\Gamma_2$ and $\Gamma_3$ satisfy the conditions in \cite[Corollary 2.2.5.1]{Bank_et_al_1982}.
	
	Clearly, $F=\Gamma_1\cap\Gamma_3$. From \cite[Corollary 2.1]{Robinson_73} we can deduce that $\Gamma_1$ is l.s.c. at $\bar v$. Since ${\rm int}C$ is open and nonempty, $\Gamma_2$ is strongly l.s.c. at $\bar v$. Moreover, $\Gamma_2( v)\subset \Gamma_3( v)$ for every $v\in {\rm Im}T$.  Therefore, the conditions (1)--(3) as in \cite[Corollary 2.2.5.1]{Bank_et_al_1982} are satisfied. 
	
	Next, we will show that	
	$$\Gamma_1(\bar v)\cap\Gamma_3(\bar v)\subset {\rm cl}\big( \Gamma_1(\bar v)\cap\Gamma_2(\bar v) \big) $$
	or, equivalently, 
	\begin{equation}\label{closure_inclusion}
	\{u\in  X :\, Tu=\bar v \}\cap C\subset {\rm cl}\big[\{u\in  X :\, Tu=\bar v \}\cap {\rm int}C\big].
	\end{equation}
	Since $F(\bar v)\ne \emptyset$, there exists $\bar u\in C$ such that $T \bar u=\bar v$. Hence, we deduce that 
	$ \{u\in  X :\, Tu=\bar v \}=\bar u+{\rm Ker} T.$
	Thus, inclusion \eqref{closure_inclusion} becomes 
	\begin{equation}\label{closure_inclusion_2}
	(\bar u+{\rm Ker} T)\cap C\subset {\rm cl}\big[(\bar u+{\rm Ker} T)\cap {\rm int} C\big].
	\end{equation}
	
	Now, we are going to prove \eqref{closure_inclusion_2}.
	Since ${\rm Ker} T\cap {\rm int}C\ne\emptyset$, by Proposition \ref{equality_of_subspaces}(i), we get 
	$(\bar u+{\rm Ker} T)\cap {\rm int} C\ne \emptyset.$
	Hence, there exists $\bar z\in (\bar u+{\rm Ker} T)\cap {\rm int} C$. Fix any  $z\in(\bar u+{\rm Ker} T)\cap C$.   Invoking \eqref{strictly_Convex_interior}, we obtain the interval $(\bar z,z)$ is contained in ${\rm int}C$. Clearly, $(\bar z,z)\subset \bar u+{\rm Ker} T$. Thus, $(\bar z,z)\subset (\bar u+{\rm Ker} T)\cap {\rm int} C$. From this we deduce that the sequence $\{\frac{1}{k}\bar z+\frac{k-1}{k} z\}_{k\ge1}$ is contained in $(\bar u+{\rm Ker} T)\cap {\rm int} C$ and is convergent to $z$. Therefore, \eqref{closure_inclusion_2} is proved. 
	
	Now suppose that \eqref{cone_image_inclusion} does not hold. Then, for each $k=1,2,\ldots$, there exists $v^k\in Y$ such that
	\begin{equation}\label{v_k_inclusion_2}
	v^k\in T(C)\cap \bar B_{Y} 
	\end{equation}
	and
	\begin{equation}\label{v_k_not_inclusion_2}
	v^k\notin  k \,T\big[C\cap \bar B_{X}\big].
	\end{equation}
	
	It follows from \eqref{v_k_inclusion_2} and \eqref{v_k_not_inclusion_2} that $\|v^k\|\le 1$ and $v^k\ne0$. If $\|v^k\|<1$, then by replacing $v^k$ with the new $v^k:=v^k/\|v^k\|$, this new $v^k$ also satisfies \eqref{v_k_inclusion_2} and \eqref{v_k_not_inclusion_2}. Consequently, we can choose $\{v^k\}$ so that $\|v^k\|=1$ for every $k=1,2,\ldots$. Since the sequence $\{v^k\}$ is bounded, by extracting a subsequence if necessary, we can assume that $\lim v^k= \bar v$. Clearly, $\bar v\ne 0$. Moreover, since $T(C)$ is closed, it follows from \eqref{v_k_inclusion_2} that $\bar v\in T(C)\subset {\rm Im}T$. Hence, $F$ is l.s.c. at $\bar v$. By invoking \cite[Proposition 5.11]{Rocka_Wets_1998}, we deduce that the function $f: {\rm Im}T\to \mathbb{R}$ defined as $f(v)=d(0,F(v))$ for every $v\in {\rm Im}T$ is upper semicontinuous at $\bar v$, i.e.,  
	for any $\varepsilon>0$, the exists $\delta>0$ such that
	$$ d(0,F(v))<d(0,F(\bar v))+\varepsilon\, \forall v\in B(\bar v,\delta).$$
	It follows that $d(0,F(v^k))<d(0,F(\bar v))+\varepsilon$ for all sufficiently large $k$. This yields $F(v^k)\ne\emptyset$ for all sufficiently large $k$.  Since $F(\bar v)$ and $F(v^k)$ are closed, convex and nonempty for all sufficiently large $k$, there are $\bar z \in F(\bar v)$ such that $\|\bar z\|=d(0,F(\bar v))$ and $z^k\in F(v^k)$ such that $\|z^k\|=d(0,F( v^k))$. Therefore, $\|z^k\|<\|\bar z\|+\varepsilon$ for all sufficiently large $k$. This implies that
	$$v^k=T(z^k)\in (\|\bar z\|+\varepsilon)\,T\big(C\cap \bar B_{ X }\big)$$
	for all sufficiently large $k$, contradicting  \eqref{v_k_not_inclusion_2}. So, \eqref{cone_image_inclusion} holds.
	
	The proof is complete.  $\hfill\Box$
\end{proof}

\begin{lemma}\label{image_of_pointed_cone}
	Let $C\subset  X $ be a strictly convex cone and $T: X \to Y$ be a linear mapping. Assume that $\{0\}\ne{\rm Ker}T\cap C\subset  {\rm bd}  C$. Then $T(C)$ is a pointed cone in $ Y$.    	
\end{lemma}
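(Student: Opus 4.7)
The plan is to prove that $T(C)$ is pointed by contradiction. Since $T$ is linear and $C$ is a cone, $T(C)$ is clearly a cone, so the substance is to show $T(C)\cap(-T(C))=\{0\}$. Suppose toward a contradiction that there exists a nonzero $v\in T(C)\cap(-T(C))$, and write $v=Tx$, $-v=Ty$ for some $x,y\in C$. Adding the two identities yields $T(x+y)=0$, and because $x+y\in C$ (as $C$ is a convex cone), we obtain $x+y\in{\rm Ker}\,T\cap C$.

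The crux of the argument is to recognize that ${\rm Ker}\,T\cap C$ is in fact a face of $C$. From the hypothesis $\{0\}\ne{\rm Ker}\,T\cap C\subset{\rm bd}\,C$, Proposition~\ref{strictly_convex_cone}(i) applied with $L={\rm Ker}\,T$ says this intersection is a ray lying on ${\rm bd}\,C$. Strict convexity of $C$ then allows us to invoke Proposition~\ref{strictly_convex_cone_0}(i), which promotes every ray on ${\rm bd}\,C$ to an extreme ray; and an extreme ray is, by the definition recalled in Section~2, a face of $C$. Denote this face by $F:={\rm Ker}\,T\cap C$.

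I would then apply the face property of a closed convex cone stated in Section~2 (namely: if $u\in F$, $x\in C$, and $u-x\in C$, then $x\in F$) with $u:=x+y$. Indeed $x+y\in F$, $x\in C$, and $(x+y)-x=y\in C$, so $x\in F\subset{\rm Ker}\,T$. But then $v=Tx=0$, contradicting $v\ne 0$. Hence $T(C)$ is pointed.

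The main obstacle is purely conceptual: one has to spot that the two hypotheses on ${\rm Ker}\,T\cap C$ combine, via Propositions~\ref{strictly_convex_cone} and \ref{strictly_convex_cone_0}, to identify it as a face of $C$. Once this identification is made, the cone-face property delivers the contradiction in a single line, and no separate treatment of the degenerate case $x+y=0$ is required, since $0\in F$ and the face property still applies verbatim.
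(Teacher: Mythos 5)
Your proof is correct, but it follows a genuinely different route from the paper's. The paper also begins by invoking Proposition~\ref{strictly_convex_cone}(i) to identify ${\rm Ker}\,T\cap C$ as a ray on ${\rm bd}\,C$, but it does so via a separating hyperplane $H$ with ${\rm Ker}\,T\subset H$ and $C$ on one side, concluding $H\cap C={\rm Ker}\,T\cap C$; it then shows that for any nonzero $y=Tx\in T(C)$ one has $x\notin H$, so $-x$ lies strictly on the opposite side of $H$ from $C$, whence $(-x+{\rm Ker}\,T)\cap C=\emptyset$ and $-y\notin T(C)$. You instead suppose $v=Tx=-Ty\ne 0$ with $x,y\in C$, note $x+y\in{\rm Ker}\,T\cap C$, promote that intersection to an extreme ray (hence a face) via Proposition~\ref{strictly_convex_cone_0}(i), and apply the cone-face property recalled in Section~2 to force $x\in{\rm Ker}\,T$ --- a contradiction. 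Each step checks out: $C$ is regular (hence closed and pointed) by the definition of strict convexity, the face property applies verbatim to $u=x+y$ even when $x+y=0$, and the equivalence between the two notions of face for closed convex cones is exactly the one the paper records. Your argument buys a purely algebraic, separation-free proof once the face identification is made; the paper's argument buys independence from the extreme-ray characterization and the cone-face property, relying only on elementary separation. (A still shorter variant of your idea: since $x+y\in{\rm bd}\,C$, Proposition~\ref{strictly_convex_cone_0}(ii) forces $x$ and $y$ onto a common ray, so $x\in{\rm Ker}\,T$ directly, bypassing faces altogether.)
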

\begin{proof} It is implied from the assumption that ${\rm Ker T}\cap \,{\rm int}C=\emptyset$. Hence,  there exists a hyperplane $H\subset  X $ separating ${\rm Ker T}$ and $C$. In fact, $C$ is contained in a closed half-space defined by $H$ and ${\rm Ker}T\subset H$. Combining this with the assumption implies that $\{0\}\ne H\cap C\subset  {\rm bd}  C$. So, applying  Proposition \ref{strictly_convex_cone}(i), we get $H\cap C={\rm Ker}T\cap C$ is a ray lying in $ {\rm bd}  C$. 
	
	Now, taking any non-zero vector $y\in T(C)$, there is a non-zero vector $x\in C$ such that $y=Tx$. If $x\in H$ then $x\in H\cap C={\rm Ker}T\cap C$, implying that $x\in {\rm Ker}T$, which is a contradiction. Thus, $x\notin H$. It follows that $u:=-x\notin H$. Noting that $C$ is pointed, we obtain $u\notin C$. Therefore, $(u+H)\cap C=\emptyset$, yielding $(u+{\rm Ker}T)\cap C=\emptyset$. This implies that $Tu=T(-x)=-y\notin T(C)$. So, $T(C)$ is pointed.  $\hfill\Box$
\end{proof}

\begin{lemma}\label{Span_Sigma}
	Let $D$ be a nonempty set in a Euclidean space $X$ and  $x$ and $y$ be arbitrary vectors in $ D $. Then, we have $${\rm span}( D -x)={\rm span}( D -y).$$
	
\end{lemma}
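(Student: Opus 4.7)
The plan is to prove both inclusions ${\rm span}(D-x)\subset {\rm span}(D-y)$ and ${\rm span}(D-y)\subset {\rm span}(D-x)$, with only one direction requiring argument since the roles of $x$ and $y$ are symmetric. The essential observation is the trivial identity $d-x=(d-y)+(y-x)$ valid for any $d\in D$, together with the hypothesis that both $x$ and $y$ lie in $D$, which is what makes the translation vectors $x-y$ and $y-x$ themselves available as elements of $D-y$ and $D-x$ respectively.

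First I would fix an arbitrary element $z\in{\rm span}(D-x)$. By the definition of span, $z$ can be written as a finite linear combination $z=\sum_{i=1}^{k}\lambda_i(d_i-x)$ with $d_i\in D$ and $\lambda_i\in\mathbb{R}$. Using the identity above term by term, this rewrites as
\begin{equation*}
z=\sum_{i=1}^{k}\lambda_i(d_i-y)+\Big(\sum_{i=1}^{k}\lambda_i\Big)(y-x).
\end{equation*}
Each vector $d_i-y$ belongs to $D-y$ and therefore to ${\rm span}(D-y)$. Moreover, because $x\in D$, the vector $x-y$ lies in $D-y$, hence $y-x=-(x-y)\in{\rm span}(D-y)$. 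Consequently, $z$ is a linear combination of elements of ${\rm span}(D-y)$, so $z\in{\rm span}(D-y)$. This yields ${\rm span}(D-x)\subset{\rm span}(D-y)$.

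Finally, swapping the roles of $x$ and $y$ and invoking the symmetric hypothesis $y\in D$ gives the reverse inclusion by exactly the same argument, establishing the claimed equality. The proof is entirely formal with no genuine obstacle; the only point to highlight is that the hypothesis $x,y\in D$ is used in an essential way precisely to guarantee that the ``translation correction'' $y-x$ (respectively $x-y$) lies in the relevant span.
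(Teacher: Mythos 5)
Your proof is correct and follows essentially the same route as the paper: both arguments rest on the identity $d-x=(d-y)+(y-x)$ together with the observation that $x\in D$ makes $x-y$ an element of $D-y$, and then conclude by symmetry. The only cosmetic difference is that the paper fixes a finite spanning set of ${\rm span}(D-x)$ containing $y-x$ and transforms it, whereas you manipulate an arbitrary linear combination directly.
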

\begin{proof} We can assume that $${\rm span}( D -x)={\rm span}\{y-x,z^1-x,\ldots,z^p-x\},$$
	where $z^i\in  D , i=1,\ldots, p$. Moreover,
	$${\rm span}\{y-x,z^1-x,\ldots,z^p-x\}={\rm span}\{x-y,z^1-y,\ldots,z^p-y\}\subset {\rm span}( D -y).$$
	Hence, ${\rm span}( D -x)\subset{\rm span}( D -y).$ Since the roles of $x$ and $y$ are equivalent, we also have ${\rm span}( D -y)\subset{\rm span}( D -x).$ 
 $\hfill\Box$
\end{proof}

\begin{lemma}\label{Boundedness_of_norm_of_coefficients}
	Assume that the system $\{\bar z_1, \dots, \bar z_d\}$ is a basis of a Euclidean space $X$. Then, there exists a number $\varepsilon>0$ such that $\{ z_1, \dots,  z_d\}$ is also a basis of  $X$ for every $z_i\in \bar B(\bar z_i,\varepsilon) (i=1,\ldots,d) $. Moreover, the set
	$$\big\{(\alpha_1,\dots,\alpha_d)^T\in \mathbb{R}^d: \|\sum_{i=1}^{d}\alpha_i z_i\|\le 1,\,z_i\in \bar B(\bar z_i,\varepsilon) (i=1,\ldots,d) \big\}$$
	is bounded.
\end{lemma}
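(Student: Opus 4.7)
The plan is to prove the two claims in sequence. For the basis preservation claim, I would fix an auxiliary orthonormal basis of $X$ and represent each $\bar z_i$ by its coordinate column, assembling them into a $d\times d$ matrix $\bar Z$. The hypothesis that $\{\bar z_1,\dots,\bar z_d\}$ is a basis is equivalent to $\det\bar Z\neq 0$. Since the determinant is a continuous (polynomial) function of the entries, there is an $\varepsilon_1>0$ such that any matrix $Z$ whose $i$th column lies within Euclidean distance $\varepsilon_1$ of the $i$th column of $\bar Z$ also satisfies $\det Z\neq 0$. Translating back, for every choice of $z_i\in\bar B(\bar z_i,\varepsilon_1)$ $(i=1,\dots,d)$, the system $\{z_1,\dots,z_d\}$ is linearly independent, and hence a basis of $X$.

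For the boundedness claim, I would fix $\varepsilon\in(0,\varepsilon_1]$ and introduce the compact set
\[
\mathcal{C}:=\{\alpha\in\mathbb{R}^d:\|\alpha\|=1\}\times\prod_{i=1}^{d}\bar B(\bar z_i,\varepsilon),
\]
together with the continuous function $f:\mathcal{C}\to\mathbb{R}$ defined by
\[
f(\alpha,z_1,\dots,z_d):=\Big\|\sum_{i=1}^{d}\alpha_i z_i\Big\|.
\]
By the first part of the lemma, for every $(z_1,\dots,z_d)$ in the second factor the system $\{z_1,\dots,z_d\}$ is a basis, so $\sum_{i=1}^d\alpha_i z_i\neq 0$ whenever $\alpha\neq 0$. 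Consequently $f>0$ on $\mathcal{C}$, and by compactness $f$ attains a positive minimum $m:=\min_{\mathcal{C}}f>0$.

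Finally, take any $(\alpha_1,\dots,\alpha_d)$ in the target set and any admissible $z_i\in\bar B(\bar z_i,\varepsilon)$ witnessing $\|\sum_{i=1}^{d}\alpha_i z_i\|\le 1$. If $\alpha=0$ the conclusion is trivial; otherwise set $\hat\alpha:=\alpha/\|\alpha\|$, so that $(\hat\alpha,z_1,\dots,z_d)\in\mathcal{C}$. Then
\[
1\ge \Big\|\sum_{i=1}^{d}\alpha_i z_i\Big\|=\|\alpha\|\cdot f(\hat\alpha,z_1,\dots,z_d)\ge \|\alpha\|\, m,
\]
which yields $\|\alpha\|\le 1/m$. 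Since this bound is independent of the choice of the $z_i$'s, the target set is contained in $\bar B_{\mathbb{R}^d}(0,1/m)$ and hence bounded. The main subtlety is coordinating the two parts: the $\varepsilon$ used in the compactness argument must be no larger than the $\varepsilon_1$ from the determinant continuity step, so that $f$ does not vanish on $\mathcal{C}$; taking the minimum of the two thresholds handles this cleanly.
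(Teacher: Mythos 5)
Your proposal is correct and follows essentially the same route as the paper: the determinant's continuity yields the $\varepsilon$ for the basis claim, and the boundedness claim follows by minimizing $\|\sum_i\alpha_i z_i\|$ over the compact set $\{\|\alpha\|=1\}\times\prod_i\bar B(\bar z_i,\varepsilon)$ and then using positive homogeneity in $\alpha$. No substantive differences.
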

\begin{proof}
	Consider the function $f(z_1,\ldots,z_d)$ defined as the determinant of the matrix formed by $d$ vectors $z_1,\ldots,z_d$ in $X$. We have that $f$ is continuous and that $f(\bar z_1,\ldots,\bar z_d)\ne 0$. So, there exists $\varepsilon>0$ satisfying $f(z_1,\ldots,z_d)\ne 0$ for every $z_i\in \bar B(\bar z_i,\varepsilon) (i=1,\ldots,d) $. It follows that $\{ z_1, \dots,  z_d\}$ is a basis of  $X$ for every $z_i\in \bar B(\bar z_i,\varepsilon) (i=1,\ldots,d) $.

	Put $H=\mathbb{R}^d\times (\underbrace{X\times\cdots\times X}_\text{$d$ products})$, $\alpha =(\alpha_1,\ldots,\alpha_d)^T\in \mathbb{R}^d$, and consider the function
	\begin{align*}
	\varphi:\,&\hspace{1cm}H\hspace{1cm}\to\hspace{1.5   cm}\mathbb{R}\\
	&(\alpha,z_1,\ldots,z_d)\hspace{0.2   cm}\mapsto \|\alpha_1 z_1+\cdots+\alpha_n z_d\|
	\end{align*}
	Then, $\varphi$ is continuous in $H$ and $\varphi(\alpha,z_1,\ldots,z_d)\ne 0$ for every 
	$$(\alpha,z_1,\ldots,z_d)\in D:= {\rm bd}  \big(\bar B_{\mathbb{R}^d}(0,1)\big)\times \bar B_X(\bar z_1,\varepsilon)\times\cdots\bar B_X(\bar z_d,\varepsilon),$$
	since on this set $\alpha\ne0$ and $\{ z_1, \dots,  z_n\}$ is a basis of  $\mathbb{R}^n$.
	Hence, noting that $D$ is compact in $H$, $\varphi$ attains its minimum $m_0>0$ on $D$. Moreover, if $\|\alpha\|\ne0$ then $$\varphi(\alpha,z_1,\ldots,z_n)=\|\alpha\| \big\|\frac{\alpha_1}{\|\alpha\|}z_1+\cdots+\frac{\alpha_n}{\|\alpha\|}z_n\big\|\ge \|\alpha\|m_0.$$
	So, the requirement $\varphi(\alpha,z_1,\ldots,z_n)=\|\sum_{i=1}^{n}\alpha_i z_i\|\le 1$ implies that $\|\alpha\|m_0\le 1$ or $\|\alpha\|\le 1/m_0$.
 $\hfill\Box$
\end{proof}

\begin{lemma}\label{Boundary_point_of_set_isomorphism}
	Let  $T: X\to Y$ be a linear bijection between Euclidean spaces and $D$ be a subset of $X$ with nonempty interior. Then, for every $x\in  {\rm bd}  D$ we have $Tx\in  {\rm bd}  \big(T(D)\big)$.
\end{lemma}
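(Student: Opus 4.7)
The plan is to exploit the fact that a linear bijection between finite-dimensional Euclidean spaces is automatically a homeomorphism: both $T$ and its inverse $T^{-1}$ are continuous (since any linear map on a finite-dimensional Euclidean space is continuous), and they are inverses of each other as set maps because $T$ is a bijection. Once this is established, the statement reduces to the standard topological fact that a homeomorphism sends boundary points to boundary points, applied to the set $D$ and its image $T(D)$.

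First I would verify the two ingredients separately. Fix $x\in \text{bd}\, D$, so $x\in \text{cl}\,D$ and $x\notin \text{int}\, D$. For the first inclusion $Tx\in \text{cl}(T(D))$, pick a sequence $\{x_k\}\subset D$ with $x_k\to x$; continuity of $T$ gives $Tx_k\to Tx$ with $Tx_k\in T(D)$, so $Tx\in \text{cl}(T(D))$.

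Next, for the exclusion $Tx\notin \text{int}(T(D))$, I would argue by contradiction. Suppose $B(Tx,\varepsilon)\subset T(D)$ for some $\varepsilon>0$. By continuity of $T^{-1}$ at $Tx$, there exists $\delta>0$ with $T(B(x,\delta))\subset B(Tx,\varepsilon)\subset T(D)$. Applying $T^{-1}$ to both sides and using bijectivity (so $T^{-1}(T(D))=D$) yields $B(x,\delta)\subset D$, contradicting $x\notin \text{int}\, D$. Combining the two facts gives $Tx\in \text{cl}(T(D))\setminus \text{int}(T(D))=\text{bd}(T(D))$.

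There is no real obstacle here; the only subtle point is remembering that the continuity of $T^{-1}$ is a genuine input (it is valid precisely because $X$ and $Y$ are finite-dimensional, which forces them to have equal dimension once $T$ is a linear bijection). The hypothesis that $\text{int}\, D \ne \emptyset$ is not actually used in the argument itself; it serves mainly to guarantee that $\text{int}(T(D))$ is likewise nonempty, so the statement $Tx\in \text{bd}(T(D))$ is nontrivial and will be meaningful in the subsequent applications in the paper.
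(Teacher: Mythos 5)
Your proof is correct. The paper itself omits the proof of this lemma as ``straightforward,'' and your argument---that a linear bijection between finite-dimensional Euclidean spaces is a homeomorphism (continuity of $T^{-1}$ being the one genuine input) and that homeomorphisms carry boundary points to boundary points---is exactly the standard argument the authors presumably had in mind; your observation that the hypothesis ${\rm int}\,D\neq\emptyset$ is not actually needed for the conclusion is also accurate.
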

\begin{proof}
	The proof is straightforward and is therefore omitted. $\hfill\Box$
\end{proof}

Now, we are ready to state our main result of this section.

\begin{theorem}\label{nonlinear_case_theorem} Assume that $K$ is a smooth cone in  $\mathbb{R}^m$ and $h:\mathbb{R}^n\to\mathbb{R}^m$ is a continuously differentiable and $K$-concave function. Then, the inclusion \eqref{convex_cone_inclusion_pre} has a local error bound at a vector $\bar x\in S $ if and only if there exists  a neighborhood $U$ of $\bar x$ such that the ${\rm ACQ}( S \cap U$) holds for \eqref{convex_cone_inclusion_pre}.
\end{theorem}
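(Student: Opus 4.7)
The proof splits into the routine necessity direction and the substantial sufficiency direction, both funneled through Theorem \ref{error_bound_equivalence}. For necessity, if a local error bound holds at $\bar x$, then Theorem \ref{error_bound_equivalence} already yields that ACQ holds on $S \cap \bar B(\bar x, \delta)$ for some $\delta > 0$, so we simply take $U = B(\bar x, \delta)$.

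For sufficiency, assuming ACQ$(S \cap U)$ is given, the plan is to produce constants $\delta, \tau > 0$ such that $h'(x)^*[N_K(h(x))] \cap \bar B_{\mathbb{R}^n} \subset \tau\, h'(x)^*[N_K(h(x)) \cap \bar B_{\mathbb{R}^m}]$ holds uniformly for $x \in S \cap \bar B(\bar x, \delta)$; Theorem \ref{error_bound_equivalence} then closes the argument. The starting observation is that, because $K$ is smooth, Corollary \ref{smooth_cone_and_polar} makes $K^\circ$ smooth as well (hence strictly convex), so $N_K(h(x))$ admits exactly three possible shapes dictated by the location of $h(x)$: the trivial cone $\{0\}$ when $h(x) \in \mathrm{int}\,K$; a single ray when $h(x) \in \mathrm{bd}\,K \setminus \{0\}$ (by Proposition \ref{strictly_convex_cone_1}); and the full strictly convex cone $K^\circ$ when $h(x) = 0$.

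I would then carry out a case analysis on $h(\bar x)$. If $h(\bar x) \in \mathrm{int}\,K$, continuity gives $h(x) \in \mathrm{int}\,K$ nearby, $N_K(h(x)) = \{0\}$, and the inclusion is vacuous (in fact $\bar x \in \mathrm{int}\,S$). If $h(\bar x) \in \mathrm{bd}\,K \setminus \{0\}$, then $N_K(h(x))$ is either $\{0\}$ or a ray for $x$ near $\bar x$, so the required inclusion reduces to a scalar estimate; the uniform $\tau$ is obtained by lower-bounding $\|h'(x)^* y_0(x)\|$, where $y_0(x)$ is a unit generator of $N_K(h(x))$, using continuity of $h'$, continuity of the ray-valued normal-cone selection on the smooth cone $K$, and the ACQ hypothesis at $\bar x$ to rule out $h'(\bar x)^* y_0(\bar x) = 0$ (which would force $N_S(\bar x) = \{0\}$ and return us to the previous trivial case). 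If $h(\bar x) = 0$, the dichotomy breaks down because $N_K(0) = K^\circ$ is full-dimensional; here I would invoke Lemma \ref{Ker_normal_cone} to show that on $\{x : h(x) = 0\}$ the kernel $\mathrm{Ker}\,h'(x)^* \cap K^\circ$ is independent of $x$, then apply Lemma \ref{lsc_of_multifunction} at $\bar x$ with $T = h'(\bar x)^*$ and $C = K^\circ$ (the closedness of $T(C)$ being delivered by ACQ, which identifies $T(C)$ with $N_S(\bar x)$, and the hypothesis $\mathrm{Ker}\,T \cap \mathrm{int}\,C \neq \emptyset$ being analyzed via Proposition \ref{interrior_perpendicular_cone} and Lemma \ref{image_of_pointed_cone}) to obtain a constant $\tau_0$ at $\bar x$. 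Propagation of $\tau_0$ to nearby $x$ uses Lemmas \ref{Span_Sigma}, \ref{Boundedness_of_norm_of_coefficients} and \ref{Boundary_point_of_set_isomorphism} to express elements of $h'(x)^*[K^\circ] \cap \bar B_{\mathbb{R}^n}$ in a slightly perturbed basis with uniformly controlled coefficients.

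The main obstacle I anticipate is the last case, and specifically the seamless gluing between the sub-regime $h(x) = 0$, where the full strictly convex cone $K^\circ$ governs the estimate, and the sub-regime $h(x) \in \mathrm{bd}\,K \setminus \{0\}$, where only a single ray contributes but that ray may rotate as $h(x) \to 0$. The battery of auxiliary Lemmas \ref{Ker_normal_cone}--\ref{Boundary_point_of_set_isomorphism} appears to have been assembled precisely to reconcile these two regimes and deliver a single $\tau$ on $S \cap \bar B(\bar x, \delta)$, at which point Theorem \ref{error_bound_equivalence} produces the desired local error bound.
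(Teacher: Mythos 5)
Your overall skeleton is the same as the paper's: reduce everything to producing a uniform $\tau$ for condition \eqref{general_inclusion_pre} via Theorem \ref{error_bound_equivalence}, and exploit smoothness of $K$ to force $N_K(h(x))$ to be a ray whenever $h(x)\in{\rm bd}K\setminus\{0\}$. Your treatment of the cases $h(\bar x)\in{\rm int}K$ and $h(\bar x)\in{\rm bd}K\setminus\{0\}$ is sound and in fact slightly more direct than the paper's (which runs a proof by contradiction with a normalized sequence $v^k$, using that failure of the local error bound forces failure of MFCQ at $\bar x$, whence the ray $N_K(h(\bar x))$ lies in ${\rm Ker}\,h'(\bar x)^*$); your observation that $h'(\bar x)^*y_0(\bar x)=0$ would put $\bar x$ in ${\rm int}S$ and trivialize the claim is a legitimate shortcut.

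The genuine gap is in the case $h(\bar x)=0$, which you only gesture at. The paper's argument there is a trichotomy you never articulate: either SCQ holds (done), or ${\rm Ker}\,h'(\bar x)^*\cap{\rm int}K^\circ\ne\emptyset$, or $\{0\}\ne{\rm Ker}\,h'(\bar x)^*\cap K^\circ\subset{\rm bd}K^\circ$. Your plan to apply Lemma \ref{lsc_of_multifunction} with $T=h'(\bar x)^*$, $C=K^\circ$ only functions in the second branch, because that lemma \emph{requires} ${\rm Ker}\,T\cap{\rm int}C\ne\emptyset$; in the third branch this hypothesis fails by definition, Lemma \ref{image_of_pointed_cone} does not rescue it (it only tells you $h'(\bar x)^*(K^\circ)$ is pointed), and an entirely different argument is needed. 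The paper's route there is structural: it shows ${\rm Im}\,h'(\bar x)\cap K$ is a single ray $\ell^+$, that $S=\{x:h(x)\in\ell^+\}$ with ${\rm int}S\ne\emptyset$, that $h_k'(x)=0$ on $S$ for all coordinates orthogonal to $\ell^+$, and then extracts $\tau$ from a compactness bound $m_1>0$ on $\|h_1'(x)\|$ over $S\cap\bar B(\bar x,\delta)$. Relatedly, your anticipated obstacle of a ``rotating ray'' as $h(x)\to0$ does not actually arise: in branch two the paper proves $S=\{x:h(x)=0\}$ (so the normal cone is always the full $K^\circ$), and in branch three $N_K(h(x))$ is the \emph{constant} ray $\ell^-$ for $h(x)\in\ell^+\setminus\{0\}$ and is moreover annihilated by $h'(x)^*$. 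Without identifying this rigidity of $S$, the ``propagation of $\tau_0$ to nearby $x$'' you describe has no basis, so the sufficiency proof is incomplete precisely in the hardest configuration.
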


\begin{proof}   We need only to prove that  if there exists  a neighborhood $U$ of $\bar x$ such that the ${\rm ACQ}( S \cap U$) holds for \eqref{convex_cone_inclusion_pre}, then the inclusion has a local error bound at $\bar x $. Moreover, by Theorem \ref{error_bound_equivalence},    the inclusion \eqref{convex_cone_inclusion_pre}  has a local error bound at $\bar x$ if and only if there exist $\delta>0$ and  $\tau>0$  such that the ACQ($ S \cap \bar B(\bar x,\delta)$) holds, i.e., 
	\begin{equation}\label{abadie_general_case}
	h'(x)^*[N_{K}(h(x))]=N_{ S }(x)\,
	\end{equation}
	for every $x\in  S \cap \bar B(\bar x,\delta)$,  and
	\begin{equation}\label{general_inclusion}
	h'(x)^*\big[N_{K}(h(x))\big]\cap\bar B_{\mathbb{R}^n}\subset \tau h'(x)^*\big[N_{K}(h(x))\cap\bar B_{\mathbb{R}^m}\big]
	\end{equation}
	for every $x\in  S \cap \bar B(\bar x,\delta)$. 
	Hence, to prove the theorem, it is sufficient to show that the ${\rm ACQ}( S \cap U$) implies  \eqref{abadie_general_case} and \eqref{general_inclusion}.
	We divide the proof into two cases. 
	
	\noindent {\bf Case 1.} $h(\bar x)\neq 0$.
	
	Since ${\rm ACQ}( S \cap U$) holds for \eqref{convex_cone_inclusion_pre} with some neighborhood $U$ of $\bar x$, there exists $\delta>0$ such that \eqref{abadie_general_case} is satisfied for every $x\in  S \cap \bar B(\bar x,\delta)$. 
	
	Suppose, on the contrary, that \eqref{convex_cone_inclusion_pre} does not have a local error bound at $\bar x$. Then, there is no $\tau>0$ such that \eqref{general_inclusion} holds for all $x\in  S \cap \bar B(\bar x,\delta)$. Thus, for every $k=1,2,\ldots$, there exist $x^k\in  S \cap \bar B(\bar x,1/k)$ and $v^k\in \mathbb{R}^n$  satisfying 
	\begin{equation}\label{v_k_belong}
	v^k\in h'(x^k)^*\big[N_{K}(h(x^k))\big]\cap \bar B_{\mathbb{R}^n}
	\end{equation}
	and
	\begin{equation}\label{v_k_not_belong}
	v^k\notin  k h'(x^k)^*\big[N_{K}(h(x^k))\cap \bar B_{\mathbb{R}^m}].
	\end{equation}
	From \eqref{v_k_belong} and \eqref{v_k_not_belong}, and by using similar arguments in the proof of Lemma \ref{lsc_of_multifunction}, we can assume that  $\|v^k\|=1$ for every $k=1,2,\ldots$, and that $\displaystyle\lim_{k\to\infty} v^k= \bar v$. Then, we have $\|\bar v\|=1$. 
	
	It is easy to see that  $\displaystyle\lim_{k\to\infty} x^k= \bar x$. In addition, combining \eqref{abadie_general_case} and \eqref{v_k_belong}, we get $v^k\in N_{S}(x^k)$.  Hence, by invoking \cite[Proposition 6.6]{Rocka_Wets_1998}, we obtain
	\begin{equation}\label{bar_v_inclusion-2}
	\bar v\in N_{S}(\bar x)= h'(\bar x)^*[N_{K}(h(\bar x)].
	\end{equation}
	
	Since \eqref{convex_cone_inclusion_pre} has not a local error bound at $\bar x$, the MFCQ($\{\bar x\}$) does not hold, i.e., ${\rm Ker}\,h'(\bar x)^*\cap N_{K}(h(\bar x))\ne \{0\}$. Moreover, it follows from the facts that $K$ is smooth and  $h(\bar x)\ne 0$ that the normal cone $N_{K}(h(\bar x))$ is a ray. Thus, we derive the inclusion ${\rm Ker}\,h'(\bar x)^*\supset N_{K}(h(\bar x))$, yielding $\,h'(\bar x)^*[N_{K}(h(\bar x))]=\{0\}$. From this and \eqref{bar_v_inclusion-2} we get $\bar v =0$, contradicting the fact that $\|\bar v\|=1$. So,  there exists $\tau>0$ satisfying \eqref{general_inclusion} holds for all $x\in  S \cap \bar B(\bar x,\delta)$, implying that \eqref{convex_cone_inclusion_pre} has a local error bound. 
	
	\noindent {\bf Case 2.} $h(\bar x)= 0$. 
	
	In this case, we have $N_{K}(h(\bar x))=N_K(0)=K^\circ$. According to the first assertion of \cite[Lemma 3]{Burke_Deng_2009}, we get
	\begin{equation*}\label{inclusion_function_extra}
	h( u)+h'( u)(x- u)-h(x)\in K
	\end{equation*}
	for every $u\in S$ and $x\in\mathbb{R}^n$. 
	It follows that 
	\begin{equation}\label{image_equation}
	h'( u)(x- u)=h(x)+w(x),
	\end{equation}
	where $w(x)\in K$, for every $u\in S$ satisfying $h(u)=0$ and $x\in\mathbb{R}^n$.
	
	Next, note that if the inclusion \eqref{convex_cone_inclusion_pre} satisfies the SCQ,  then it has a local error bound at $\bar x$. Hence, we arrive at the conclusion.
	
	We now assume that  \eqref{convex_cone_inclusion_pre} does not satisfy the SCQ.
	Then, the MFCQ($\{\bar x\})$ does not hold, i.e., ${\rm Ker}\,h'(\bar x)^*\cap K^\circ\ne \{0\}$. Consequently, one of the following  two subcases must occur: either ${\rm Ker}\,h'(\bar x)^*\cap {\rm int}K^\circ\ne \emptyset$ or  $\{0\}\ne{\rm Ker}\,h'(\bar x)^*\cap K^\circ\subset  {\rm bd}  K^\circ$. 
	
	\noindent {\bf Subcase 2.1.} ${\rm Ker}\,h'(\bar x)^*\cap {\rm int}K^\circ\ne \emptyset.$
	
	Since the ${\rm ACQ}( S \cap U$) holds for \eqref{convex_cone_inclusion_pre} with some neighborhood $U$ of $\bar x$, there exists $\delta_0>0$ such that \eqref{abadie_general_case} holds for every $x\in  S \cap \bar B(\bar x,\delta_0)$.

	Now, we will show that 
	\begin{equation}\label{Sigma_subcase_1}
	S =\{x\in \mathbb{R}^n: h(x)=0\}.
	\end{equation}
	By the facts that  ${\rm Ker}\,h'(\bar x)^*\cap {\rm int}K^\circ\ne \emptyset$ and $({\rm Ker}\,h'(\bar x)^*)^\perp={\rm Im }\,h'(\bar x)$, applying Lemma \ref{interrior_perpendicular_cone}(i), we obtain 
	\begin{equation}\label{Ker_image_perpencular}
	{\rm Im }\,h'(\bar x)\cap K=\{0\}.
	\end{equation} 
	Noting that $h(\bar x)=0$, that $h(x)\in K$ if $x\in  S $ and that $K$ is a convex cone, it follows from \eqref{image_equation} that $h'(\bar x)(x-\bar x)=h(x)+w(x)\in K$ for every $x\in  S $. Combining this with \eqref{Ker_image_perpencular}, we derive 
	\begin{equation}\label{image_equal_0}
	h'(\bar x)(x-\bar x)=0
	\end{equation}
	for every $x\in S$. 
	Hence, $h(x)+w(x)=0$. This yields $h(x)=w(x)=0$, as $K$ is pointed. Therefore, we get \eqref{Sigma_subcase_1}.
	
	From \eqref{Sigma_subcase_1} and Lemma \ref{Ker_normal_cone} we obtain 
	\begin{equation*}\label{kernel_intersection_cone_2}
	{\rm Ker}\,h'(x)^*\cap K^\circ={\rm Ker}\,h'(\bar x)^*\cap K^\circ
	\end{equation*}
	for every $x\in  S $. Combining this with the assumption ${\rm Ker}\,h'(\bar x)^*\cap {\rm int}K^\circ\ne \emptyset$ yields 
	\begin{equation*}\label{kernel_intersection_interrior}
	{\rm Ker}\,h'( x)^*\cap {\rm int}K^\circ\ne \emptyset
	\end{equation*}
	for every $x\in  S $. 
	Thus, invoking  of Proposition \ref{equality_of_subspaces}(ii), we get 
	\begin{equation}\label{equality_of_kernel}
	{\rm Ker}\,h'( x)^*={\rm Ker}\,h'(\bar x)^*
	\end{equation}
	for every $x\in  S $.
	
	Using \eqref{Sigma_subcase_1} and \eqref{equality_of_kernel} and repeating the same arguments as in \eqref{Ker_image_perpencular}-\eqref{image_equal_0}, we also get 
	\begin{equation*}\label{image_equal_x}
	h'( x)(y-x)=0.
	\end{equation*}
	for every $x,\,y\in  S $. It follows that $y-x\in {\rm Ker}\, h'(x)$ for every $x,\,y\in  S $. This yields  
	\begin{equation*}\label{Sigma_Ker_inclusion}
	S -x\subset {\rm Ker}\, h'(x)
	\end{equation*} 
	for every $x\in S $.  Combining this with the fact that ${\rm Ker}\, h'(x)$ is a subspace of $\mathbb{R}^n$, we deduce that 
	$${\rm span }\,( S -x)\subset {\rm Ker}\, h'(x),$$
	for every $x\in S $. By Lemma \ref{Span_Sigma}, the subspace $L:=   {\rm span }\,( S -x)$ is constant for all $x\in S $. Moreover, we have
	$$L\subset  {\rm Ker}\, h'(x)=({\rm Im}\,h'(x)^*)^\perp$$
	for every $x\in S $. Hence
	\begin{equation}\label{Image_sigma_normal_cone_inclusion_1}
	{\rm Im}\,h'(x)^*\subset L^\perp,
	\end{equation}
	for every $x\in S $.
	On the other hand, since \eqref{abadie_general_case} holds for every $x\in  S \cap\bar B(\bar x,\delta_0)$, we infer that
	\begin{equation*}\label{Image_sigma_normal_cone_inclusion_2}
	{\rm Im}h'(x)^*\supset h'(x)^*(K^\circ)=N_{ S }(x)=N_{ S -x}(0)\supset N_L(0)=L^\perp\, 
	\end{equation*}
	for every $x\in  S \cap\bar B(\bar x,\delta_0)$.
	Combining this with \eqref{Image_sigma_normal_cone_inclusion_1}, we obtain
	\begin{equation}\label{Image_sigma_normal_cone_equation}
	{\rm Im}\,h'(x)^*= h'(x)^*(K^\circ)=L^\perp
	\end{equation}
	for every $x\in  S \cap\bar B(\bar x,\delta_0)$. 
	
	Now, let $\{\bar u_{k+1},\dots,\bar u_m\}$ be a basis of ${\rm Ker}h'(\bar x)^*$. By \eqref{equality_of_kernel},  $\{\bar u_{k+1},\dots,\bar u_m\}$ is also  a basis of ${\rm Ker}h'( x)^*$ for every $x\in  S$. Add vectors $\bar u_1,\dots,\bar u_k$ to get a basis $\{\bar u_1,\dots,\bar u_m\}$  of $\mathbb{R}^m$.  
	Set
	\begin{equation}\label{z_representation}
	\bar z_i = h'(\bar x)^*\bar u_i
	\end{equation}
	and
	\begin{equation}\label{w_representation}
	w_i(x) = h'( x)^*\bar u_i
	\end{equation}
	for $i=1,\dots,k$.
	Then, we have that $\{\bar z_1,\dots,\bar z_k\}$ and $\{w_1(x),\dots,w_k(x)\}$ are bases of ${\rm Im}h'(\bar x)^*={\rm Im}h'(x)^*$ for every $x\in  S \cap\bar B(\bar x,\delta_0)$.  
	
	We extend $\{\bar z_1,\dots,\bar z_k\}$ to obtain a basis of $\{\bar z_1,\dots,\bar z_n\}$ of $\mathbb{R}^n$. By Lemma \ref{Boundedness_of_norm_of_coefficients}, there exists $\varepsilon>0$ such that $\{z_1,\ldots,z_n\}$ is also a basis of $\mathbb{R}^n$ for every $z_i\in \bar B(\bar z_i,\varepsilon)$ with $i=1,\ldots,n$. Moreover, since $h'$ is continuous at $\bar x$, we have $\displaystyle\lim_{x\to \bar x}\|h'(x)^*-h'(\bar x)^*\|=0$. Hence, there is a number $\delta\in (0,\delta_0)$ such that $\|h'(x)^*-h'(\bar x)^*\|<\frac{\varepsilon}{d}$, where 
	$$d:=\max\{\|\bar u_1\|,\dots,\|\bar u_k\|\}>0,$$  
	for every $x\in  S \cap\bar B(\bar x,\delta)$. Combining this with \eqref{z_representation} and \eqref{w_representation}, we get
	\begin{equation}\label{z_w_norm_relation}
	\|w_i(x)-\bar z_i\| = \|(h'(x)^*-h'(\bar x)^*)\bar u_i\|\le \|(h'(x)^*-h'(\bar x)^*)\|\|\bar u_i\|\le (\frac{\varepsilon}{d})d=\varepsilon,
	\end{equation}
	for every $x\in  S \cap\bar B(\bar x,\delta),\, i=1,\ldots,k$.
	
	Now, fix any $x\in  S \cap \bar B(\bar x,\delta)$. Recall that $\{z_1,\ldots,z_n\}$ is a basis of $\mathbb{R}^n$ for every $z_i\in \bar B(\bar z_i,\varepsilon),\, i=1,\ldots,n$. Hence, invoking \eqref{z_w_norm_relation}, we can extend the system  
	$\{w_{1}(x),\dots,w_k(x)\}$
	to get a basis 
	$\{w_{1}(x),\dots,w_n(x)\}$
	of $\mathbb{R}^n$ so that $\|w_i(x)-\bar z_i\|\le \varepsilon$ for every $i=1,\dots,n$. Thus, there is a (unique)  linear bijection $P(x):\, \mathbb{R}^n\to\mathbb{R}^n$ such that
	$$  \bar z_i = P(x)(w_i(x)), \, i=1,\dots n.  $$
	Furthermore, we have
	$$(P(x)\,o\, h'( x)^*)\bar u_i = P(x)(w_i(x)) =\bar  z_i = h'(\bar x)^*\bar u_i, i=1,\ldots, k$$
	and
	$$
	(P(x)\,o \,h'( x)^*)\bar u_j = 0 =  h'(\bar x)^*\bar u_j, j=k+1,\ldots, n.
	$$
	Therefore, 
	\begin{equation}\label{derivative_of_h_isomorphism}
	h'(\bar x)^*=P(x)\, o\, h'( x)^*
	\end{equation} 
	for every $x\in  S \cap\bar B(\bar x,\delta)$.
	
	Next, we will show that there exists a positive number $M$ such that 
	\begin{equation}\label{P_x_norm_upper_bound}
	\|P(x)\|\le M
	\end{equation}
	for every $x\in  S \cap \bar B(\bar x,\delta)$. 
	Since $\{w_1(x),\cdots,w_n(x)\}$ is a basis of $\mathbb{R}^n$, for every $w\in \mathbb{R}^n$, we have the representation 
	$$w=\sum_{i=1}^n \alpha_i(x) w_i(x).$$ 
	Invoking  Lemma \ref{Boundedness_of_norm_of_coefficients} again, we get the set
	$$ \bigg\{(\alpha_1(x),\ldots,\alpha_n(x))^T\in\mathbb{R}^n: \|w\|=\|\sum_{i=1}^n \alpha_i(x) w_i(x)\|\le1,\, x\in  S \cap \bar B(\bar x,\delta)\bigg\}$$
	is bounded by a positive number $m_0$, which does not depend on $x\in  S \cap \bar B(\bar x,\delta)$.
	Hence, fixing  any $x\in  S \cap \bar B(\bar x,\delta)$, for every $w\in\mathbb{R}^n$ satisfying $\|w\|\le1$, we have
	\begin{align*}
	\|(P(x)-I)w\|&=\big\|\sum_{i=1}^n\alpha_i(x)(P(x)-I)w_i(x)\big\|=\big\|\sum_{i=1}^n\alpha_i(x     )(P(x)w_i(x)-w_i(x))\big\|\\
	&=\big\|\sum_{i=1}^n\alpha_i(x)(z_i-w_i(x))\big\|\le\sum_{i=1}^n |\alpha_i(x)|\|z_i-w_i(x)\|\\
	&\le \varepsilon\sum_{i=1}^n |\alpha_i(x)|\\
	&\le \varepsilon\sqrt{n}\big\|(\alpha_1(x),\cdots,\alpha_n(x))^T\big\|\\
	&\le \varepsilon\sqrt{n}m_0,
	\end{align*}
	where $I$ denotes the identity operator in $\mathbb{R}^n$.
	It follows that
	$$\|P(x)-I\|=\sup_{\|w\|\le1}\|(P(x)-I)w\| \le \varepsilon\sqrt{n}m_0.$$
	Thus,
	$$\|P(x)\|\le \|I\|+ \varepsilon\sqrt{n}m_0=1+ \varepsilon\sqrt{n}m_0$$
	for all $x\in  S \cap \bar B(\bar x,\delta)$.
	Putting $M= 1+ \varepsilon\sqrt{n}m_0,$
	we obtain \eqref{P_x_norm_upper_bound}.
	
	Now, note that ${\rm Ker}\,h'(x)^*\cap {\rm int}K^\circ\ne \emptyset$ and that $h'(x)^*(K^\circ)=N_S(x)$ is closed for every $x\in  S \cap \bar B(\bar x,\delta)$. Hence, by Lemma \ref{lsc_of_multifunction}, for every $x\in  S \cap \bar B(\bar x,\delta)$, there exists $\tau_x>0$ such that
	\begin{equation*}\label{inclusion_for_x}
	L^\perp\cap \bar B_{\mathbb{R}^n}=h'(x)^*(K^\circ)\cap \bar B_{\mathbb{R}^n}\subset \tau_x h'(x)^*(K^\circ\cap \bar B_{\mathbb{R}^m}),
	\end{equation*}
	where the equation is due to \eqref{Image_sigma_normal_cone_equation}. This implies that the zero vector is an interior point of $h'(x)^*(K^\circ\cap\bar B_{\mathbb{R}^m})$ restricted to $L^\perp$.
	In addition, the set $h'(x)^*(K^\circ\cap\bar B_{\mathbb{R}^m})$ is  convex and compact.  Thus, 
	\begin{equation*}\label{inclusion_for_h_x}
	m_x:=\min \big\{ \|v\|: u\in  {\rm bd} _{L^\perp}(h'(x)^*(K^\circ\cap\bar B_{\mathbb{R}^m}))\big\}>0
	\end{equation*}
	for every $x\in  S \cap \bar B(\bar x,\delta)$. (Here, ${\rm bd}_{L^\perp} (D)$ denotes the boundary of $D$ in the subspace $L^\perp$.) It follows that
	\begin{equation}\label{inclusion_for_h_x_2}
	h'(x)^*(K^\circ)\cap \bar B_{\mathbb{R}^n}= L^\perp\cap \bar B_{\mathbb{R}^n}\subset \max\{\frac{1}{m_x},1\} h'(x)^*(K^\circ\cap \bar B_{\mathbb{R}^m})
	\end{equation}
	for every $x\in  S \cap \bar B(\bar x,\delta)$.
	
	Fixing $x\in  S \cap \bar B(\bar x,\delta)$, there exists a vector $v_0(x)\in  {\rm bd} _{L^\perp}(h'(x)^*(K^\circ\cap\bar B_{\mathbb{R}^m})$ such that $\|v_0(x)\|=m_x$.  Noting that  $h'(x)^*(K^\circ\cap\bar B_{\mathbb{R}^m})\subset L^\perp$, by \eqref{derivative_of_h_isomorphism},  we have 
	$$h' ( \bar x)^*(K^\circ\cap\bar B_{\mathbb{R}^m})=P(x)\big[h' ( x)^*(K^\circ\cap\bar B_{\mathbb{R}^m})\big]=P(x)|_{L^\perp}\big[h' ( x)^*(K^\circ\cap\bar B_{\mathbb{R}^m})\big].$$ 
	Noting also that  $P(x)|_{L^\perp}$ is  a linear bijection from $L^\perp$ onto ${\rm Im}(P(x)|_{L^\perp})$, by  Lemma \ref{Boundary_point_of_set_isomorphism}, we can find a vector $\bar v_0\in  {\rm bd} _{L^\perp} \big(h'(\bar x)^*(K^\circ\cap\bar B_{\mathbb{R}^m})\big)$ satisfying 
	$$\bar v_0=P(x)|_{L^\perp}( v_0(x))=P(x)( v_0(x)).$$ 
	It follows that 
	$$\|\bar v_0\|\le\|P(x)\|\| v_0(x)\|.$$ 
	Thus, by \eqref{P_x_norm_upper_bound}, 
	$$m_x=\| v_0(x)\|\ge\|\bar v_0\|/\|P(x)\|\ge m_{\bar x}/M$$ 
	for every $x\in  S \cap \bar B(\bar x,\delta)$.  Combining this with \eqref{inclusion_for_h_x_2}, we obtain
	
	$$h'(x)^*(K^\circ)\cap\bar B_{\mathbb{R}^n}\subset \tau h'(x)^*(K^\circ\cap\bar B_{\mathbb{R}^m}),$$
	where $\tau =\max\{\frac{M}{m_{\bar x}},1\}$, for every $x\in  S \cap \bar B(\bar x,\delta)$. Hence, we deduce that \eqref{general_inclusion} is satisfied for every $x\in  S \cap \bar B(\bar x,\delta)$. Moreover, since $\delta<\delta_0$, \eqref{abadie_general_case} is also satisfied for every $x\in  S \cap \bar B(\bar x,\delta)$. So, in Subcase 2.1, the inclusion \eqref{convex_cone_inclusion_pre} has a local error bound at $\bar x$.
	
	\medskip
	\noindent {\bf Subcase 2.2.} $\{0\}\ne{\rm Ker}\,h'(\bar x)^*\cap K^\circ\subset  {\rm bd}  K^\circ$.
	
	Note that  $K^\circ$ is strictly convex due to Proposition \ref{smooth_cone_then_strictly_convex_1}. Combining this with the assumption in this subcase, by Lemma \ref{image_of_pointed_cone}, we derive $h'(\bar x)^*(K^\circ)$ is a pointed cone. Since the ${\rm ACQ}( S \cap U$) holds for \eqref{convex_cone_inclusion_pre}, it is implied that $N_ S (\bar x)=h'(\bar x)^*(K^\circ)$. Hence, $N_ S (\bar x)$ is a pointed cone. Thus, from the claim in \cite[p. 7]{Auslender_Teboulle_2003} we infer that ${\rm int} S \ne\emptyset$. Moreover, it is clear that ${\rm Ker}\,h'(\bar x)^*\ne \mathbb{R}^m$, yielding $h'(\bar x)\ne 0$. Since $h'$ is continuous at $\bar x$, there exists $\delta_0>0$ such that $h'(x)\ne 0$ for every $x\in \bar B(\bar x,\delta_0)$.
	
	Next, from the assumption, by invoking Proposition \ref{interrior_perpendicular_cone}(ii),  we can deduce  that $\{0\}\ne{\rm Im}h'(\bar x)\cap K\subset  {\rm bd}  K$. Combining this with the fact that $K$ is strictly convex, we infer that ${\rm Im}h'(\bar x)\cap K$ is a ray of $ {\rm bd}  K$, denoted by $\ell^+$. Additionally, using  \eqref{image_equation} with $u:=\bar x$, we see that for every $x\in S $, there exists $w(x)\in K$ satisfying
	\begin{equation}\label{image_cone_ray}
	h(x)+w(x)=h'(\bar x)(x-\bar x)\in {\rm Im}h'(\bar x).
	\end{equation}
	From this and the fact that $h(x)+w(x)\in K$ we get
	$h(x)+w(x)\in {\rm Im}h'(\bar x)\cap K=\ell^+,$ 
	yielding $(h(x)+w(x))/2\in \ell^+$.
	If $h(x)$ and $w(x)$ are not on the same ray, then by \eqref{strictly_Convex_interior}, we derive $(h(x)+w(x))/2\in {\rm int }K$. Hence, $\ell^+\cap {\rm int}K\ne\emptyset$, a contradiction. Thus, $h(x)$ and $w(x)$ are on the same ray. Combining this with \eqref{image_cone_ray}, we derive that $h(x)\in \ell^+$ for every $x\in S$. It follows that
	\begin{equation}\label{S_representation}
	S =\{x\in\mathbb{R}^n:\, h(x)\in  \ell^+ \}.
	\end{equation}
	
	Now, denote by $\{e_1,\ldots,e_m\}$ an orthogonal basis of $\mathbb{R}^m$ such that $e_1\in\ell^+$. In this basis, we have the representation $h(x)=(h_1(x),\cdots,h_m(x))^T$ for every $x\in \mathbb{R}^n$. Then, using \eqref{S_representation}, we obtain
	\begin{equation*}\label{Sigma_repenstation}
	S =\{x\in\mathbb{R}^n:\, h_1(x)\ge0,h_2(x)=\cdots=h_m(x)=0\}.
	\end{equation*}
	We will show that, for each $k=2,\ldots,m$, 
	\begin{equation}\label{Derivative_of_h_k}
	h_k'(x)=0\, \forall x\in S.
	\end{equation}
	Fixing $k\in\{2,\ldots,m\}$ and $\tilde x\in S $, since $S$ is convex and ${\rm int}S\ne\emptyset$, there exists a number $\delta_1\in (0,\delta_0)$ such that $B(\tilde x,\delta_1)\cap {\rm int} S \ne \emptyset$. Hence, we can choose a point $x^0\in B(\tilde x,\delta_1)\cap {\rm int} S $ and a number $\delta_2\in(0,\delta_1)$ such that $B(x^0,\delta_2)\subset B(\tilde x,\delta_1)\cap {\rm int} S $. Thus, for every $x\in B(x^0,\delta_2)$ and all non-zero real numbers $\lambda$ sufficiently small, we derive
	$$h_k(\tilde x +\lambda (x-\tilde x))=h_k(\tilde x)+\lambda h_k'(\tilde x)(x-\tilde x)+o(\lambda),$$
	with $\displaystyle \lim_{\lambda\to 0}\frac{o(\lambda)}{\lambda}=0$.
	From this and the fact that $h_k(\tilde x +\lambda (x-\tilde x))=h_k(\tilde x)=0$ we get
	$h_k'(\tilde x)(x-\tilde x)=0$
	for every $x\in B(x^0,\delta_2)$.
	It follows that $h_k'(\tilde x)[B(x^0-\tilde x,\delta_1)]=\{0\}$. This yields $B(x^0-\tilde x,\delta_1)\subset {\rm Ker}h_k'(\tilde x)$. Therefore, ${\rm Ker}h_k'(\tilde x)=\mathbb{R}^n$, and so \eqref{Derivative_of_h_k} holds.
	
	Next, by representing $h'(x)=(h'_{ij}(x))_{m\times n}$ for every $x\in \mathbb{R}^n$, it follows from \eqref{Derivative_of_h_k} that $h'_{ij}(x)=0$ for every $x\in  S $, $i=2,\ldots,m,j=1,\ldots n$. Hence,
	\begin{equation}\label{image_of_normal_cone_equation_0}
	h'(x)^*[N_K(h(x))]=\big\{ (h_{11}'(x)z_1,\cdots,h_{1n}'(x)z_1)^T:\,z=(z_1,\cdots,z_m)^T\in N_K(h(x)) \big\}
	\end{equation}
	for every $x\in  S $.
	This infers that 
	\begin{equation}\label{image_normal_cone_equation_2}
	\begin{aligned}
	&h'(x)^*[N_K(h(x))\cap\bar B_{\mathbb{R}^m}]\\
	&=\Bigg\{ (h_{11}'(x)z_1,\cdots,h_{1n}'(x)z_1)^T:z\in N_K(h(x))\cap\bar B_{\mathbb{R}^m}\Bigg\}
	\end{aligned}
	\end{equation}
	for every $x\in  S $.
	
	It is not difficult to see that for every $x\in S$ and $z=(z_1,\ldots,z_m)^T\in N_K(h(x))$, we have $z_1\le0$.
	Hence, \eqref{image_of_normal_cone_equation_0} implies that
	\begin{equation}\label{image_normal_cone_equation}
	\begin{aligned}
	&h'(x)^T[N_K(h(x))]\cap\bar B_{\mathbb{R}^n}\\
	&=\Bigg\{ (h_{11}'(x)z_1,\cdots,h_{1n}'(x)z_1)^T: z\in N_K(h(x)),\,(-z_1)\sqrt{\sum_{j=1}^n h'_{1j}(x)^2}\le 1 \Bigg\}.
	\end{aligned}
	\end{equation}
	for every $x\in  S $.
	
	Recall that $h'(x)\ne0$ for every $x\in \bar B(\bar x,\delta_0)$ and that $h'_{ij}(x)=0$ for every $x\in  S $, $i=2,\ldots,m,\,j=1,\ldots n$. Consequently, observing that $\delta_2<\delta_1<\delta_0$, we get $\displaystyle\sqrt{\sum_{j=1}^n h'_{1j}(x)^2}>0$ for every $x\in  S \cap \bar B(\bar x,\delta_2)$. Moreover, since $h$ is continuously differentiable and $S \cap \bar B(\bar x,\delta_2)$ is compact, it is inferred that 
	
	$$m_1:=\min\Bigg\{\sqrt{\sum_{j=1}^n h'_{1j}(x)^2}:\, x\in  S \cap \bar B(\bar x,\delta_2)\Bigg\}>0.$$
	Combining this with \eqref{image_normal_cone_equation}, we obtain
	\begin{equation}\label{image_normal_cone_inclusion}
	h'(x)^*[N_K(h(x))]\cap\bar B_{\mathbb{R}^n}\subset\big   \{ (h_{11}'(x)z_1,\ldots,h_{1n}'(x)z_1)^T:
	z_1\in [-1/m_1,0]\big\}
	\end{equation}
	for every $x\in S\cap\bar B(\bar x,\delta_2)$.
	
	By \eqref{S_representation}, for every $x\in  S $, if $h(x)=0$ then $N_K(h(x))=K^\circ$; if $h(x)\ne 0$ then $N_K(h(x))=\ell^-$, a constant ray in $K^\circ$, due to  Proposition \ref{regular_cone}(ii). Furthermore, from \eqref{normal_cone_form} it is not difficult to see that $\langle u,v\rangle=0$ for every $u\in \ell^-$ and $v\in \ell^+$. This means that
	$$C_1:=\{z_1:\, z=(z_1,\ldots,z_m)^T\in \ell^-\}=\{0\}.$$
	The latter, in conjunction with \eqref{image_of_normal_cone_equation_0}, implies that  
	\begin{equation}\label{image_normal_cone_zero}
	h'(x)^*[N_K(h(x))]=\{0\}
	\end{equation}
	for all $x\in S$ satisfying $h(x)\ne0$.
	
	On the other hand, by putting
	$$C_2=\{z_1:\, z=(z_1,\ldots,z_m)^T\in K^\circ\cap\bar B_{\mathbb{R}^m}\},$$
	we have $C_2\ne\{0\}$.  Additionally, $C_2$ is connected, compact and $z_1\le0$ for every $z_1\in C_2$. It follows that $ m_2:=\min C_2<0$ and 
	$C_2=\{z_1:\, z_1\in[m_2,0]\}.$
	Combining this with \eqref{image_normal_cone_equation_2}, we get 
	\begin{equation*}\label{image_normal_cone_equation_22}
	h'(x)^*[N_K(h(x))\cap\bar B_{\mathbb{R}^m}]
	=\Bigg\{ (h_{11}'(x)z_1,\ldots,h_{1n}'(x)z_1)^T:z_1\in[m_2,0]\Bigg\}
	\end{equation*}
	for all $x\in S$ satisfying $h(x)=0$.
	Based on this,  and setting $\tau=-\frac{1}{m_1m_2}>0$, along with \eqref{image_normal_cone_inclusion}, we deduce that \eqref{general_inclusion} holds for all $x\in S\cap \bar B(\bar x,\delta_2)$ satisfying $h(x)=0$. In addition, by \eqref{image_normal_cone_zero}, it is evident that \eqref{general_inclusion} also holds for all $x\in S\cap \bar B(\bar x,\delta_2)$ satisfying $h(x)\ne0$. In summary, with $\tau=-\frac{1}{m_1m_2}>0$,  we conclude that \eqref{general_inclusion} is satisfied for all $x\in S\cap \bar B(\bar x,\delta_2)$.
	
	Moreover, since the ${\rm ACQ}( S \cap U$) holds for \eqref{convex_cone_inclusion_pre}, there exists $\delta\in(0,\delta_2)$ such that \eqref{abadie_general_case} holds for every $x\in  S \cap \bar B(\bar x,\delta)$. Clearly,  \eqref{general_inclusion} also holds for every $x\in S\cap \bar B(\bar x,\delta)$. So, we have proved that, the inclusion \eqref{convex_cone_inclusion_pre} has a local error bound at $\bar x$.
	
	The proof is complete. $\hfill\Box$
\end{proof}

\medskip
The following example shows that, in general, the ACQ($S$) does not imply the existence of a global error bound for \eqref{convex_cone_inclusion_pre}. 
\begin{example}(cf. \cite[Example 4.1]{Li_2010})\label{Example_1}
	Consider the continuously differentiable function $h=(h_1,h_2): \mathbb{R}^4\to \mathbb{R}^2$ with its components defined as:
	$$h_1(x_1,x_2,x_3,x_4)= -x_1$$ and
	\begin{align*}
	h_2(x_1,x_2,x_3,x_4)=-x_1^{16}-x_2^8-x_3^6-x_1x_2^3x_3^3-x_1^2x_2^4x_3^2&-x_2^2x_3^4-x_1^4x_3^4-x_1^4x_2^6
	\\
	&-x_1^2-x_2^2-x_3^2+x_4.	
	\end{align*}
	Setting $K=\mathbb{R}_+^2 $, we have $K$ is a smooth cone in $\mathbb{R}^2$.

	According to \cite[Example 4.1]{Li_2010}, the functions $h_1$ and $h_2$ are concave.  Hence, the function $h$ is $K$-concave. Thus, the inclusion $h(x)\in K$ satisfies all the conditions in Theorem \ref{nonlinear_case_theorem}.
	
	Note that $h(1,0,0,3)=(1,1)\in {\rm int}K$. Therefore,  the inclusion satisfies the SCQ. Consequently, the ACQ($ S $) is satisfied.
	
	On the other hand, putting $f=\max\{-f_1,-f_2\}$, we get 
	$$ S =\{x\in \mathbb{R}^4: h(x)\in K\}=\{x\in \mathbb{R}^4: f(x)\le 0\}. $$ 
	We also have $d(h(x)| K)= \max(f(x),0)$ for all $x\in\mathbb{R}^4$.  Therefore, the existence of a global error bound for the inclusion is equivalent to the existence of  a Lipschitz-type global error bound of $f$ defined in \cite[Definition 4.1]{Li_2010}. However, by \cite[Example 4.1]{Li_2010}, the function $f$ has no Lipschitz-type global error bounds. It follows that the inclusion has no global error bounds.

\end{example}

\section{Global Error Bounds for Smooth Cone-Affine Inclusions}
In this  section, we consider the case where $h(x)=Ax+b$, with $A: \mathbb{R}^{n}\to  \mathbb{R}^m$ being a linear function and $b\in \mathbb{R}^m$, and assume that the cone $K$ is smooth.  Then, the inclusion \eqref{convex_cone_inclusion_pre} and its solution set become
\begin{equation}\label{convex_cone_inclusion_linear_case}
Ax+b\in K.
\end{equation}
and
\begin{equation}\label{sigma_define_linear_case}
S =\{x \in  Y\, : Ax+b\in K \big\},
\end{equation}
respectively.
Moreover, the ACQ($ S$) for this inclusion simplifies to
\begin{equation}\label{abadie_linear_case} 
A^*[N_{K}(Ax+b)]=N_{ S }(x)\, \forall x\in  S.
\end{equation}
By invoking Theorem \ref{error_bound_equivalence}, we derive that the inclusion \eqref{convex_cone_inclusion_linear_case} has a global error bound if and only if  the ACQ($S$) holds for \eqref{convex_cone_inclusion_linear_case} and there exists a constant $\tau>0$ such that
\begin{equation}\label{linear_inclusion}
A^*\big[N_{K}(Ax+b)\big]\cap \bar B_{\mathbb{R}^n}\subset \tau A^*\big[N_{K}(Ax+b)\cap \bar B_{\mathbb{R}^m}\big]\,\forall x\in S.
\end{equation}

As mentioned earlier, the authors in \cite{Ng_Yang_2002} provide conditions for the existence of global error bounds for \eqref{convex_cone_inclusion_linear_case} when 
$K$ is a second-order cone. We will extend these results to the scenario where 
$K$  is a smooth cone. In doing so, we precisely identify the cases in which \eqref{convex_cone_inclusion_linear_case} admits a global error bound. Our proofs rely extensively on the relationship between error bounds and the Abadie constraint qualification. Consequently, we also determine the exact conditions under which the ACQ($S$) holds for \eqref{convex_cone_inclusion_linear_case}.

\begin{theorem}\label{affine_case_theorem_1} If ${\rm Im}A\cap {\rm int}K\ne\emptyset$, then the inclusion \eqref{convex_cone_inclusion_linear_case} has a global error bound.
\end{theorem}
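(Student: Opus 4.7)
The plan is to invoke Theorem~\ref{error_bound_equivalence} in its global form: the existence of a global error bound for \eqref{convex_cone_inclusion_linear_case} is equivalent to the conjunction of ${\rm ACQ}(S)$ and the existence of a single constant $\tau>0$ for which \eqref{linear_inclusion} holds uniformly in $x\in S$. Under the hypothesis ${\rm Im}\,A\cap{\rm int}\,K\neq\emptyset$, both requirements will be derived from the equivalent dual formulation ${\rm Ker}\,A^*\cap K^\circ=\{0\}$.

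First, I would show that the hypothesis forces the SCQ. Indeed, picking $\bar u\in\mathbb{R}^n$ with $A\bar u\in{\rm int}\,K$ and any $x_0\in S$ (which exists by the standing assumption), the general fact that $u+v\in{\rm int}\,K$ whenever $u\in K$ and $v\in{\rm int}\,K$ (valid for any convex cone) gives $A(x_0+\bar u)+b=(Ax_0+b)+A\bar u\in{\rm int}\,K$, so $x_0+\bar u$ is a Slater point. Via the chain ${\rm SCQ}\Rightarrow{\rm ACQ}(S)$ recalled in Section~1, the first requirement is established at once.

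For the uniform inclusion, I would apply Proposition~\ref{interrior_perpendicular_cone}(i) to the regular cone $K^\circ$ (regular by Proposition~\ref{regular_cone}(i), with $(K^\circ)^\circ=K$) and $L={\rm Im}\,A$, obtaining ${\rm Ker}\,A^*\cap K^\circ=\{0\}$. The set $\{y\in K^\circ:\|y\|=1\}$ is nonempty (since $K$ is pointed, $K^\circ\neq\{0\}$) and compact, so the continuous map $y\mapsto\|A^*y\|$ attains a strictly positive minimum $c$ on it; by positive homogeneity, $\|A^*y\|\geq c\|y\|$ for every $y\in K^\circ$. Given any $x\in S$ and any $z=A^*y\in A^*[N_K(Ax+b)]\cap\bar B_{\mathbb{R}^n}$ with $y\in N_K(Ax+b)\subset K^\circ$, this forces $\|y\|\leq\|z\|/c\leq 1/c$, so $y\in N_K(Ax+b)\cap(1/c)\bar B_{\mathbb{R}^m}$ and hence $z\in\frac{1}{c}\,A^*\bigl[N_K(Ax+b)\cap\bar B_{\mathbb{R}^m}\bigr]$. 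Thus \eqref{linear_inclusion} holds uniformly with $\tau=1/c$, and Theorem~\ref{error_bound_equivalence} delivers the global error bound.

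The only mildly delicate point is the \emph{uniformity} of $\tau$ across all $x\in S$, but this is handled for free: once $A^*$ is known to be injective on the entire closed cone $K^\circ$, which contains $N_K(Ax+b)$ for every $x\in S$, the compactness argument on the unit sphere of $K^\circ$ produces a single $c$ valid at every feasible point. No appeal to the perturbation-of-basis machinery used in the proof of Theorem~\ref{nonlinear_case_theorem} is needed in this affine setting.
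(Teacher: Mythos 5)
Your proof is correct. It begins exactly as the paper's does: Proposition~\ref{interrior_perpendicular_cone}(i), applied with $C=K^\circ$ and $L={\rm Im}\,A$ (using $({\rm Im}\,A)^\perp={\rm Ker}\,A^*$ and $(K^\circ)^\circ=K$), converts the hypothesis into the dual condition ${\rm Ker}\,A^*\cap K^\circ=\{0\}$. At that point the paper simply cites Theorem~5 of Burke and Deng \cite{Burke_Deng_2009}, which states that this dual condition implies a global error bound, and stops. You instead unpack that citation: you verify the two conditions of Theorem~\ref{error_bound_equivalence} directly, obtaining ${\rm ACQ}(S)$ through the Slater point $x_0+\bar u$ (the observation $K+{\rm int}\,K\subset{\rm int}\,K$ is valid for a convex cone) and obtaining a uniform $\tau=1/c$ from the compactness of $\{y\in K^\circ:\|y\|=1\}$ together with the injectivity of $A^*$ on $K^\circ$. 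Every step checks out; in particular the uniformity of $c$ across $x\in S$ is handled correctly because $N_K(Ax+b)\subset K^\circ$ for every feasible $x$. The trade-off is transparency versus length: the paper's two-line proof leans on an external result, while yours is self-contained modulo Theorem~\ref{error_bound_equivalence} and makes visible why neither smoothness of $K$ nor any structure of $b$ is needed under this hypothesis.
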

\begin{proof}
	Note that $({\rm Im}A)^\perp={\rm Ker}A^*$ and $(K^\circ)^\circ=K$. Hence, from the assumption and by invoking Proposition \ref{interrior_perpendicular_cone}, we obtain ${\rm Ker}A^*\cap K^\circ=\{0\}$. Applying \cite[Theorem 5]{Burke_Deng_2009}, it follows that \eqref{convex_cone_inclusion_linear_case} has a global error bound. $\hfill\Box$
\end{proof}	

\begin{theorem}\label{affine_case_theorem_2} Assume that ${\rm Im}A\cap K=\{0\}$. Then the inclusion \eqref{convex_cone_inclusion_linear_case} has a global error bound  if and only if   ACQ($ S $) holds for \eqref{convex_cone_inclusion_linear_case}. 	
	Moreover, the following statements hold:
	
	(i)	 If $ {(\rm Im}A+b)\cap {\rm int}K\ne \emptyset$, then \eqref{convex_cone_inclusion_linear_case} has a global error bound;

	(ii) If  $b\in {\rm Im}A$, then \eqref{convex_cone_inclusion_linear_case} has a global error bound;
	
	(iii) If $ {(\rm Im}A+b)\cap {\rm int}K= \emptyset$ and $b\notin {\rm Im}A$, then \eqref{convex_cone_inclusion_linear_case} has no global error bounds.
	
\end{theorem}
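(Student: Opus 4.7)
The plan is to prove the three assertions (i), (ii), (iii) first and then deduce the equivalence. The direction ``global error bound $\Rightarrow$ ACQ($S$)'' is immediate from Theorem \ref{error_bound_equivalence}. For the converse, note that the three conditions in (i)--(iii) are exhaustive under the assumption ${\rm Im}A \cap K = \{0\}$, so ACQ($S$) rules out case (iii) (where ACQ actually fails, as I show below), forcing case (i) or (ii), both of which yield a global error bound. In (i) and (ii), my strategy is to invoke Theorem \ref{error_bound_equivalence} by establishing both ACQ($S$) and a uniform constant $\tau$ satisfying \eqref{linear_inclusion}.

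For (iii), I would first analyze the convex set $M := ({\rm Im}A + b) \cap K$. By strict convexity of $K$ (Proposition \ref{strictly_convex_cone_0}(ii)), any two points of $M$ must lie on a common extreme ray of ${\rm bd}K$, since otherwise the open segment between them would enter ${\rm int}K$, contradicting the case assumption. Hence $M$ lies in a single ray $\ell$ of ${\rm bd}K$. The condition $b \notin {\rm Im}A$ forces the direction of $\ell$ not to lie in ${\rm Im}A$, so $\ell \cap ({\rm Im}A + b)$ reduces to a single point $v_0$. Consequently $S = \bar x + {\rm Ker}A$ for any $\bar x \in S$, with $N_S(\bar x) = {\rm Im}A^*$. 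By smoothness of $K$ (Proposition \ref{smooth_cone_then_strictly_convex}), $N_K(v_0)$ is a ray, so $A^*[N_K(v_0)]$ is a ray or $\{0\}$; this cannot equal the positive-dimensional subspace ${\rm Im}A^*$ in the non-trivial case $A\ne 0$, so ACQ($S$) fails and Theorem \ref{error_bound_equivalence} rules out any global error bound.

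For (ii), the conditions $b \in {\rm Im}A$ and ${\rm Im}A \cap K = \{0\}$ force $Ax + b = 0$ for every $x \in S$, so $S = \{x : Ax = -b\}$ is an affine subspace with $N_S(x) = {\rm Im}A^*$. To verify ACQ, I would invoke Proposition \ref{interrior_perpendicular_cone}(i) to obtain ${\rm Ker}A^* \cap {\rm int}K^\circ \ne \emptyset$, and then a recession-type shift shows that every affine subspace $w + {\rm Ker}A^*$ meets ${\rm int}K^\circ$, whence $A^*(K^\circ) = {\rm Im}A^*$. Since $K^\circ$ is strictly convex (Proposition \ref{smooth_cone_then_strictly_convex_1}) and $A^*(K^\circ)$ is a subspace and therefore closed, Lemma \ref{lsc_of_multifunction} yields $\tau > 0$ with $A^*(K^\circ) \cap \bar B_{\mathbb{R}^n} \subset \tau A^*(K^\circ \cap \bar B_{\mathbb{R}^m})$, which is exactly \eqref{linear_inclusion} since $N_K(Ax+b) = K^\circ$ throughout $S$.

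For (i), SCQ holds at some $\hat x$ with $A\hat x + b \in {\rm int}K$, giving ACQ. The main obstacle is the uniform bound in \eqref{linear_inclusion}. At $x \in S$ with $Ax + b \in {\rm int}K$ the inclusion is trivial; at $x \in S$ with $Ax + b \in {\rm bd}K \setminus \{0\}$, smoothness of $K$ reduces the task to showing $\inf_x \|A^*v_0(x)\| > 0$, where $v_0(x)$ is the unit generator of the ray $N_K(Ax+b)$. I would argue by contradiction, extracting a sequence $v_0(x_k) \to v^* \in {\rm bd}K^\circ \cap {\rm Ker}A^*$ with $\|v^*\| = 1$ and, after replacing each $x_k$ by its projection on $({\rm Ker}A)^\perp$ (which leaves $Ax_k + b$ unchanged), considering two subcases. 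If $\{x_k\}$ is bounded, extracting a limit $x^*$, the normal-cone identity gives $\langle v^*, Ax^* + b \rangle = 0$ and hence $\langle v^*, b\rangle = 0$; but \cite[Proposition 1.1.15]{Auslender_Teboulle_2003} applied at $A\hat x + b \in {\rm int}K$ with $v^* \in K^\circ \setminus \{0\}$ yields $\langle v^*, A\hat x + b\rangle < 0$, that is $\langle v^*, b\rangle < 0$, a contradiction. If $\|x_k\| \to \infty$, then $w^* := \lim x_k/\|x_k\| \in ({\rm Ker}A)^\perp$ with $\|w^*\| = 1$ satisfies $Aw^* \in {\rm Im}A \cap K = \{0\}$, so $w^* = 0$, contradicting $\|w^*\| = 1$. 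This unbounded subcase, where SCQ alone is insufficient and the recession information ${\rm Im}A \cap K = \{0\}$ becomes indispensable, is the principal difficulty of the proof.
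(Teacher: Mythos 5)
Your proof is correct, but it is organized differently from the paper's and, in case (i), does genuinely more work. The paper establishes the equivalence first: ACQ($S$) gives ACQ($S\cap U$) at each point of $S$, Theorem \ref{nonlinear_case_theorem} then yields a local error bound at every point of $S$, and \cite[Proposition 3]{Burke_Deng_2009} upgrades this to a global bound; parts (i)--(iii) are then settled purely by deciding whether ACQ($S$) holds (SCQ in (i); the identity $A^*(K^\circ)={\rm Im}A^*$ in (ii); the single-point intersection argument in (iii)). You instead prove (i)--(iii) directly and recover the equivalence from the exhaustiveness of the three cases, which is a valid reorganization. Your treatments of (ii) and (iii) coincide in substance with the paper's, except that in (ii) you must also supply the $\tau$-condition \eqref{linear_inclusion}, which your appeal to Lemma \ref{lsc_of_multifunction} with $T=A^*$ and $C=K^\circ$ handles correctly (the hypotheses ${\rm Ker}A^*\cap{\rm int}K^\circ\ne\emptyset$ and closedness of $A^*(K^\circ)={\rm Im}A^*$ are both available). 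The real divergence is (i): where the paper gets the global bound for free from the already-proved equivalence, you verify \eqref{linear_inclusion} by hand, reducing it to $\inf\|A^*v_0(x)\|>0$ over boundary solutions and excluding a vanishing subsequence via the Slater point in the bounded subcase and via the recession condition ${\rm Im}A\cap K=\{0\}$ in the unbounded subcase. This is more self-contained --- it avoids both the nonlinear local theorem and the external Proposition 3 --- at the price of an explicit compactness argument. One shared caveat: both your proof of (iii) and the paper's implicitly assume $A\ne 0$; in the degenerate case $A=0$ with $b\in{\rm bd}K\setminus\{0\}$ one has $S=\mathbb{R}^n$, so the error bound and ACQ($S$) hold trivially and assertion (iii) fails as literally stated, so the non-triviality of $A$ should be made explicit.
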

\begin{proof}
	To show the equivalence between the existence of global error bounds for \eqref{convex_cone_inclusion_linear_case} and the satisfaction of ACQ($ S $) for the inclusion,  we need only to show that if ACQ($ S $) holds for \eqref{convex_cone_inclusion_linear_case}, then the inclusion admits a global error bound.
	
	For every $x\in S$, since ACQ($ S $) holds, we have the ACQ($S\cap U$) holds for some neighborhood of $x$. By Theorem \ref{nonlinear_case_theorem}, \eqref{convex_cone_inclusion_linear_case} has a local error bound at $x$. This implies that  \eqref{convex_cone_inclusion_linear_case} has a local error bound at every point of $ S $. Thus, by applying \cite[Proposition 3]{Burke_Deng_2009}, we arrive at the conclusion.
	
	Using the above equivalence, to prove the statements (i)-(iii), we need to investigate the existence of ACQ($ S $) for \eqref{convex_cone_inclusion_linear_case}.

	(i) Since $ {(\rm Im}A+b)\cap {\rm int}K\ne \emptyset$, the SCQ holds for \eqref{convex_cone_inclusion_linear_case}, which yields the ACQ($ S $) holds for \eqref{convex_cone_inclusion_linear_case}.

	(ii) From the assumption and the fact that $b\in {\rm Im}A$, we derive $$S=\{x\in \mathbb{R}^n:\, Ax+b\in K\}=\{x\in \mathbb{R}^n:\, Ax+b=0\}.$$
	Then, putting $b=-A\bar x$ with $\bar x\in \mathbb{R}^n$, we get $S=\bar x+{\rm Ker}A$. Hence, for any $x\in S$, we have $N_S(x)={\rm Im}A^*$. Moreover, it is clear that $A^*[N_K(Ax+b)]=A^*(K^\circ)$ for all $x\in S$. 
	
	Clearly, $A^*(K^\circ)\subset {\rm Im}A^*$. We also have ${\rm Im}A^*\subset A^*(K^\circ)$. Indeed, taking any $y\in{\rm Im}A^*$, there exists $ z\in \mathbb{R}^n$ satisfying $y=A^* z$.  By the assumption and Proposition \ref{interrior_perpendicular_cone}, we deduce that ${\rm Ker}A^*\cap {\rm int}K^\circ\ne \emptyset$. Thus, we can find a vector $\bar u\in {\rm Ker}A^*\cap {\rm int}K^\circ$. This yields $A^*\bar u=0$ and $\bar u+z/\lambda\in {\rm int} K^\circ$ for some sufficiently large positive number $\lambda$. Combining this with the fact that $K^\circ$ is a cone, we derive $\lambda(\bar u+z/\lambda)=\lambda \bar u+z\in  K^\circ$. Moreover,
	$$A^*(\lambda \bar u+z)=\lambda A^*(\bar u)+A^*(z)=A^*(z)=y.$$ 
	It follows that $y\in A^*(K^\circ)$, implying ${\rm Im}A^*\subset A^*(K^\circ)$. Therefore, $A^*(K^\circ)= {\rm Im}A^*$. This infers that \eqref{abadie_linear_case} holds. So, ACQ($S$) holds for \eqref{convex_cone_inclusion_linear_case}.
	
	(iii) Since $S\ne\emptyset$, we have ${(\rm Im}A+b)\cap K\ne\emptyset$. Using similar arguments as in the proof of \cite[Theorem 5.8 (c)]{Ng_Yang_2002}, we will show that ${(\rm Im}A+b)\cap K$ contains only one non-zero vector, denoted by $\bar z$. For any  $z\in{(\rm Im}A+b)\cap K$, we have $0\notin [\bar z,z]$, as $b\notin {\rm Im}A$, and $ [\bar z,z]\subset {\rm bd}K$, because  $ {(\rm Im}A+b)\cap {\rm int}K= \emptyset$. Combining this with Proposition \ref{strictly_convex_cone_0}(ii) yields $\bar z$ and $z$ lie on the same ray of ${\rm bd}K$. This means that there exists $\lambda>0$ satisfying $z=\lambda \bar z$. If $\lambda>1$, then $0\ne z-\bar z\in {\rm Im}A\cap K$, which is a contradiction. Similarly, if $\lambda<11$, then $0\ne \bar z- z\in {\rm Im}A\cap K$, which is also a contradiction. Hence, $\lambda=1$, implying that $z=\bar z$. Thus, ${(\rm Im}A+b)\cap K=\{\bar z\}$.
	
	It follows from the latter assertion that 
	$$S=\{x\in \mathbb{R}^n:\, Ax+b\in K\}=\{x\in \mathbb{R}^n:\, Ax+b=\bar z\}=\bar u+{\rm Ker }A,$$
	where $\bar u\in \mathbb{R}^n$ satisfying $A\bar u=\bar z-b$, and that $N_K(Ax+b)=N_K(\bar z)$ is a ray for all $x\in S$.
	Therefore, $N_S(x)={\rm Im}A^*$ and $A^*[N_K(Ax+b)]$ is at most a ray for all $x\in S$. Consequently, the ACQ($S$) does not hold for \eqref{convex_cone_inclusion_linear_case}. $\hfill\Box$
\end{proof}

\begin{theorem}\label{affine_case_theorem}  Assuming that $\{0\}\ne{\rm Im}A\cap K\subset  {\rm bd}  K$, we have the following:
	
	(i)	 If ${\rm dim} {(\rm Im}A)\ne1$  and $b\in {\rm Im}A$, then ACQ($S$) does not hold for \eqref{convex_cone_inclusion_linear_case}. Consequently, \eqref{convex_cone_inclusion_linear_case} has no global error bounds;

	(ii) If ${\rm dim} {(\rm Im}A)\ne1$  and $b\notin {\rm Im}A$, then ACQ($S$)  holds for \eqref{convex_cone_inclusion_linear_case}, but  \eqref{convex_cone_inclusion_linear_case} has no global error bounds;
	
	(iii) If ${\rm dim} {(\rm Im}A)=1$, then \eqref{convex_cone_inclusion_linear_case} has a global error bound.
\end{theorem}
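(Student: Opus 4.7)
\textbf{Plan for the proof of Theorem~\ref{affine_case_theorem}.} The overall plan exploits the fact that $\ell^+ := {\rm Im}A \cap K$ is a ray, generated by some $p \in {\rm bd}K \setminus \{0\}$; this follows since smoothness implies strict convexity of $K$ (Proposition~\ref{smooth_cone_then_strictly_convex}) and Proposition~\ref{strictly_convex_cone}(i) then gives the ray structure. Setting $V := {\rm Im}A + b$, I handle the three statements in turn.

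For (i), since $b \in {\rm Im}A$ a translation reduces to $b = 0$, so up to translation $S = A^{-1}(\ell^+) = \mathbb{R}_+ y_0 + {\rm Ker}A$, where $Ay_0 = p$. Picking $x_0 \in S$ with $Ax_0 + b$ in the relative interior of $\ell^+$, the normal cone $N_S(x_0)$ equals the orthogonal complement of $L := {\rm span}\{y_0\} + {\rm Ker}A$, a linear subspace of dimension $\dim {\rm Im}A - 1 \geq 1$; yet $A^*[N_K(Ax_0 + b)]$ is at most a ray, since $N_K(Ax_0 + b)$ is a ray by smoothness of $K$. A nontrivial linear subspace cannot coincide with a ray, so ACQ($S$) fails at $x_0$, and Theorem~\ref{error_bound_equivalence} excludes a global error bound.

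For (ii), I first establish ACQ($S$) by showing $V \cap {\rm int}K \neq \emptyset$: if instead $V \cap K \subset {\rm bd}K$, then Proposition~\ref{strictly_convex_cone_0}(ii) confines $V \cap K$ to a single ray of $K$; this ray must equal $\ell^+$, because the recession cone of $V \cap K$ is ${\rm Im}A \cap K = \ell^+$; but $\ell^+ \subset {\rm Im}A$ while $V \cap {\rm Im}A = \emptyset$ (from $b \notin {\rm Im}A$), forcing $V \cap K = \emptyset$ and contradicting $S \neq \emptyset$. Hence SCQ holds and ACQ($S$) follows. To rule out a global error bound, I construct a sequence $z^k \in V \cap {\rm bd}K$ with $\|z^k\| \to \infty$ and $z^k/\|z^k\| \to p/\|p\|$: fixing $y_0 \in V \cap {\rm int}K$ and $d \in {\rm Im}A \setminus \mathbb{R}p$ (available since $\dim {\rm Im}A \geq 2$), I set $z^k = y_0 + t_k p + s^+(t_k) d$ with $s^+(t) := \sup\{s : y_0 + tp + sd \in K\}$, and a recession argument (${\rm Im}A \cap K \subset \mathbb{R}_+ p$) forces $s^+(t) = o(t)$, delivering the required normalisation. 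The main obstacle is to show that the unit generator $n_0$ of $N_K(p)$ lies in ${\rm Ker}A^*$: since ${\rm Im}A \cap {\rm int}K = \emptyset$, any line through $p$ in a direction $v \in {\rm Im}A \setminus \mathbb{R}p$ meets $K$ only at $p$, which forces $v$ into the tangent hyperplane $\{n_0\}^\perp$; combined with $\langle n_0, p\rangle = 0$ this gives $n_0 \perp {\rm Im}A$. Continuity of the unit-normal map on the boundary of a smooth cone then yields $A^* n(z^k) \to A^* n_0 = 0$; picking $x^k \in A^{-1}(z^k - b)$ so that $Ax^k + b = z^k$, the inclusion~\eqref{general_inclusion_pre} collapses, for this ray-valued normal cone, to the scalar inequality $\|A^* n(z^k)\| \geq 1/\tau$, which fails eventually for every $\tau > 0$, blocking a global error bound.

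For (iii), $\dim {\rm Im}A = 1$ lets me write $Ax = \phi(x) p$ for a nonzero linear functional $\phi$. The set $\{s \in \mathbb{R} : sp + b \in K\}$ is a closed convex interval with recession $\mathbb{R}_+$ (since $p \in K$), hence of the form $[s_0, \infty)$; therefore $S = \{x : \phi(x) \geq s_0\}$ is a half-space, and $d(x, S) = \max\{0, s_0 - \phi(x)\}/\|\phi\|$. When $b \notin {\rm Im}A$, I take a unit outward normal $n_0 \in N_K(s_0 p + b)$; strict convexity of $K$ makes the exposed face at $s_0 p + b$ the ray through that point, which contains $p$ only if $b \in \mathbb{R}p$, whence $\langle n_0, p\rangle < 0$. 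The support-function bound $d(z, K) \geq \langle n_0, z\rangle$ yields $d(Ax+b, K) \geq (s_0 - \phi(x)) |\langle n_0, p\rangle|$ and hence the global error bound with constant $\alpha = 1/(\|\phi\| |\langle n_0, p\rangle|)$. When $b \in {\rm Im}A$ the boundary point $s_0 p + b$ is the origin, and the scaling identity $d(-tp, K) = t \, d(-p, K)$ (with $d(-p, K) > 0$ by pointedness of $K$) supplies the analogous bound.
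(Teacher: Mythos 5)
Your overall strategy is sound and parts of it genuinely diverge from the paper. Part (i) is essentially the paper's argument (the paper shows ${\rm int}S=\emptyset$ and compares a subspace of dimension $\dim({\rm Im}A)-1\ge 1$ with the at-most-a-ray set $A^*[N_K(Ax+b)]$; your relative-interior formulation is if anything more careful). In part (ii) the strategy coincides with the paper's --- establish SCQ to get ACQ($S$), then exhibit boundary points of $K$ in ${\rm cone}\big((b+{\rm Im}A)\cap K\big)$ whose directions converge into $\ell^+$, where the unit normal is annihilated by $A^*$, so that the quantitative condition \eqref{general_inclusion_pre} fails for every $\tau$ --- but your explicit construction $z^k=y_0+t_kp+s^+(t_k)d$ is cleaner than the paper's more topological argument with ${\rm bd}_V N$ and a dimension-count to avoid $\ell^+$. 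Part (iii) is genuinely different and, to my mind, preferable: you compute $S=\{\phi\ge s_0\}$ and prove the error bound directly from the support-function inequality $d(z,K)\ge\langle n_0,z\rangle$, whereas the paper routes through Theorem \ref{error_bound_equivalence}, the local error bound result, and the observation that only finitely many normal cones occur; your route buys an explicit constant $\alpha=1/(\|\phi\|\,|\langle n_0,p\rangle|)$ and avoids the heavier machinery.

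There is one genuine (though fillable) gap, in part (ii). The reduction of \eqref{general_inclusion_pre} to the scalar inequality $\|A^*n(z^k)\|\ge 1/\tau$ is only valid when $A^*n(z^k)\ne 0$; if $A^*n(z^k)=0$ the inclusion holds trivially and no contradiction results, so proving $A^*n(z^k)\to 0$ alone does not finish the argument. You must also show $A^*n(z^k)\ne 0$ for all (large) $k$. This is true for your sequence: since $b\notin{\rm Im}A$ one has $z^k\notin{\rm Im}A$, hence $z^k\notin\ell^+$; if $A^*n(z^k)=0$ then $n(z^k)\in{\rm Ker}A^*\cap K^\circ$, which by Proposition \ref{interrior_perpendicular_cone}(ii) and Proposition \ref{strictly_convex_cone}(i) is a ray $\ell^-\subset{\rm bd}K^\circ$, and the strict-convexity orthogonality argument (as in the paper's verification of \eqref{vector_not_perpendicular}) then forces $z^k\in\ell^+$, a contradiction. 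Add this step; without it the contradiction in (ii) does not close. A minor further point: the ``continuity of the unit-normal map'' you invoke should be justified via outer semicontinuity of $N_K(\cdot)$ together with the fact that every nonzero boundary normal cone of a smooth cone is a single ray.
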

\begin{proof}
	
	Since $K$ is strictly convex, it follows from the assumption and Proposition \ref{strictly_convex_cone}(i) that  ${\rm Im}A\cap K$ is a ray  in $ {\rm bd}  K$, denoted by $\ell^+$.    
	
	(i)  As $b\in {\rm Im}A$, it follows that $Ax+b\in {\rm Im}A$ for every $x\in\mathbb{R}^n$. Hence,   \begin{equation}\label{S_Affine_0}
	S=\{x\in\mathbb{R}^n:\,Ax+b\in K\}=\{x\in\mathbb{R}^n:\,Ax+b\in \ell^+\}.
	\end{equation}
	
	We have ${\rm int}S=\emptyset$. Indeed, if the claim is false, then there exist $\bar x\in S$ and $\varepsilon>0$ such that $B(\bar x,\varepsilon)\subset S$. Thus, for all $x\in B(\bar x,\varepsilon)$, it follows from \eqref{S_Affine_0} that
	\begin{equation}\label{S_Affine_1}
	A(x-\bar x)=(Ax+b)-(A\bar x+b)\in \ell^+-\ell^+.
	\end{equation}
	In addition, taking any $x\in \mathbb{R}^n\backslash\{\bar x\}$, we have $\varepsilon(x-\bar x)/\|x-\bar x\|\in B(\bar x,\varepsilon)$. By \eqref{S_Affine_1} and the fact that $\ell:=\ell^+-\ell^+$ is a line through the zero vector, we obtain $A(x-\bar x)\in \ell$, inferring $Ax\in A\bar x+\ell$. 
	This implies that ${\rm dim} {(\rm Im}A)=1$, which is a contradiction. Hence, ${\rm int}S=\emptyset$.  Therefore, $N_S(x)$ is a linear subspace of $\mathbb{R}^n$ with dimension at least 1 for every $x\in S$.
	
	On the other hand, for some $x\in S$ with $Ax+b\in \ell^+\backslash\{0\}$, since $N_K(Ax+b)$ is a ray, we have that $A^*[N_K(Ax+b)]$ is at most a ray. It follows that $N_S(x)\ne A^*[N_K(Ax+b)]$. This means that the ACQ($S$) does not holds for \eqref{convex_cone_inclusion_linear_case}. As a result, the inclusion has no global error bounds.
	
	(ii) 	First, we claim that 	
	\begin{equation}\label{SCQ_Affine}
	(b+{\rm Im}A)\cap {\rm int}K\ne\emptyset.
	\end{equation} Indeed, if the claim is false, then $(b+{\rm Im}A)\cap K\subset {\rm bd}K$. Since $S\ne\emptyset$ and $b\notin {\rm Im}A$, there exists a non-zero vector $\bar y$ such that $\bar y\in (b+{\rm Im}A)\cap K\subset {\rm bd}K$. If $\bar y\notin \ell^+$, then taking an arbitrary non-zero vector $y\in \ell^+$, we get $\bar y+y=2\frac{\bar y+y}{2}\in {\rm int}K$, due to \eqref{strictly_Convex_interior} and the fact that $K$ is a cone, a contradiction. Accordingly, $\bar y\in \ell^+\subset {\rm Im}A$, implying that $b+{\rm Im}A=\bar y+{\rm Im}A={\rm Im}A$. This yields $b\in {\rm Im}A$, contradicting our assumption. 
	
	From \eqref{SCQ_Affine} we infer that the SCQ holds for \eqref{convex_cone_inclusion_linear_case}. Hence, the  ACQ($S$) holds for \eqref{convex_cone_inclusion_linear_case}.
	
	Next, it is not difficult to see that there exists a vector $z^1\in (b+{\rm Im}A)\backslash K$, since $K$ is pointed. Then, taking an arbitrary $z^2\in(b+{\rm Im}A)\cap {\rm int}K$, the segment connecting $z^1$ and $z^2$ will intersect ${\rm bd}K$ at a point denoted by $\bar z$. Clearly, $\bar z\in b+{\rm Im}A$,  $\bar z\notin {\rm Im}A$ and $\bar z+\ell^+\subset K$. Combining the latter with the fact that $\bar z+\ell^+\subset b+{\rm Im}A$ yields $\bar z+\ell^+\subset (b+{\rm Im}A)\cap K$. Moreover, setting $M={\rm cone}(\bar z +\ell^+)$, we can deduce that $\ell^+\subset{\rm cl} M$.  This implies that $\ell^+\subset {\rm cl}{N}$, where $N:={\rm cone}((b+{\rm Im}A)\cap K)\supset M$.

	We will show that ${\rm cl} N=N\cup \ell^+$. Clearly, ${\rm cl} N\supset N\cup \ell^+$.  Now, consider a sequence $(t_ky^k)\subset N$ converging to $\bar z\in {\rm cl} N\backslash N$, where $t_k\ge 0$ and $y_k\in (b+{\rm Im}A)\cap K$ for every $k=1,2,\ldots$.	
	Then,  $\bar z\in V\cap K$, where $V$ is the linear subspace of $\mathbb{R}^m$ generated by $\{b\}\cup{\rm Im}A$. Since $\bar z\in K\backslash N$, we obtain $\bar z\notin {\rm cone}({\rm Im}A+b)$.
	This yields 
	\begin{equation}\label{ray_subspace_intersection}\{t\bar z:t\ge0\}\cap  ({\rm Im}A+b)=\emptyset.
	\end{equation}
	Moreover, since $b\notin {\rm Im}A$, we see that    ${\rm Im}A$ is a hyperplane in the linear subspace $V$ and divides $V$ into two closed half-spaces. Since ${\rm Im}A\cap K\subset  {\rm bd}  K$ and ${\rm cl} N\subset V\cap K$, we deduce that ${\rm cl} N$ belongs to one the mentioned half-spaces. Clearly,  ${\rm Im}A+b$ also belongs to one of these half-spaces.   In addition, $({\rm Im}A+b)\cap {\rm cl} N\ne \emptyset$. Thus, ${\rm Im}A+b$ and ${\rm cl} N$ belong to the same closed half-space, denoted by $V^+$. Moreover, $\bar z\in{\rm cl} N\subset V^+$. Therefore, 
	$$\{t\bar z:t<0\}\cap  ({\rm Im}A+b)=\emptyset.$$ 	
	Combining this with \eqref{ray_subspace_intersection}, we get 
	\begin{equation*}\label{ray_subspace_intersection_2}\{t\bar z:t\in\mathbb{R}\}\cap  ({\rm Im}A+b)=\emptyset.
	\end{equation*} 
	This implies that $\bar z\in {\rm Im}A$, yielding $\bar z\in {\rm Im}A\cap K=\ell^+$. Consequently, ${\rm cl} N=N\cup \ell^+$.
	Hence, by observing that ${\rm cl} N\subset K$ and $\ell^+\subset {\rm bd}K$, we obtain  $$\ell^+\subset {\rm bd}_V({\rm cl} N)={\rm bd}_V N,$$ where ${\rm bd}_V D$ denotes the boundary of a set $D$ in the linear subspace $V$. Moreover, it is not difficult to see  that ${\rm bd}_V N\subset {\rm bd}K$. Additionally, since ${\rm dim}{(\rm Im}A)>1$, we infer that ${\rm dim} V>2$
	
	Next,  fixing a non-zero vector $\bar w\in \ell^+\subset {\rm bd}_VN$ and taking any $\varepsilon>0$, there exist  vectors $w^1_\varepsilon\in  B_V(\bar w,\varepsilon)\cap {\rm int}_VN$ and $w^2_\varepsilon\in  B_V(\bar x,\varepsilon)\backslash N$, where ${\rm int}_VD$ and $B_V(x,\varepsilon)$ denote the interior of $D$ and the open ball with center $x$ and radius $\varepsilon$ in the subspace $V$, respectively. Furthermore, since ${\rm dim}(B_V(\bar w,\varepsilon))={\rm dim}V>2$ and the dimension of the subspace in $\mathbb{R}^m$ generated by $\{w^1_\varepsilon\}\cup \ell^+$ is 2, we can choose $w^2_\varepsilon$ so that it does not belong to this subspace. It follows that $(w^1_\varepsilon,w^2_\varepsilon)\cap \ell^+=\emptyset$, implying $$w_\varepsilon:=(w^1_\varepsilon,w^2_\varepsilon)\cap {\rm bd}_VN\notin \ell^+.$$ In addition, $w_\varepsilon\in B_V(\bar w,\varepsilon)$. Hence,  $w_\varepsilon\in B_V(\bar w,\varepsilon)\cap ( {\rm    bd}_VN\backslash\ell^+)$. Combining this with the facts that $N\cup \ell^+={\rm cl} N=N\cup {\rm    bd}_VN$ and ${\rm bd}_V N\subset {\rm bd}K$, we get 
	$$w_\varepsilon\in B_V(\bar w,\varepsilon)\cap N\cap ( {\rm    bd}K\backslash\ell^+)\, \forall \varepsilon>0.$$ 
	The latter implies that there exists a sequence $(w^k)\subset \mathbb{R}^n$ such that 
	\begin{equation*}\label{w_k_sequence}
	(w^k)\subset	N\cap ( {\rm    bd}K   \backslash\ell^+) \text{ and } \lim_{k\to\infty}w^k=\bar w.
	\end{equation*} 
	We will show that
	\begin{equation}\label{normal_not_subset_kernel_0}
	A^*(u)=0\, \forall u\in N_K(\bar w).
	\end{equation} 
	Take any vector $ u\in N_K(\bar w)$. Since $\bar w\in {\rm bd}K$, by Proposition \ref{regular_cone}(i), we have $u\in {\rm bd}K^0$ and $\langle \bar w, u\rangle=0$.  Noting that $\{0\}\ne{\rm Im}A\cap K\subset  {\rm bd}  K$, by invoking Proposition \ref{interrior_perpendicular_cone}(ii), we obtain $\{0\}\ne{\rm Ker} A^*\cap K^\circ\subset  {\rm bd}  K^\circ$. Moreover,  $K^\circ$ is strictly convex due to Proposition \ref{smooth_cone_then_strictly_convex_1}. Hence, by applying Proposition \ref{strictly_convex_cone}(i), we deduce that ${\rm Ker} A^*\cap K^\circ$ is a ray, denoted by $\ell^-$, in $ {\rm bd}  K^\circ$.	 
	Moreover, we  have 
	\begin{equation}\label{vector_not_perpendicular_0}
	\langle \bar w,z\rangle\ne0\, \forall z\in {\rm bd}K^0\backslash\ell^-.
	\end{equation} 
	Indeed, suppose that there exists a vector $\bar z\in  {\rm bd}K^0\backslash\ell^-$ satisfying $\langle \bar w,\bar z\rangle=0$. Choosing an arbitrary vector $\tilde z\in \ell^-\backslash \{0\}$, we have $\langle \bar w,\tilde z\rangle=0$.  This yields $\langle \bar w,(\bar z+\tilde z)/2\rangle=0$. By \cite[Propostition 1.1.15]{Auslender_Teboulle_2003}, it follows that $(\bar z+\tilde z)/2\in {\rm bd}K^0$. On the other hand, since $\bar z$ and $\tilde z$ are not contained in the same ray, invoking \eqref{strictly_Convex_interior}, we derive $(\bar z+\tilde z)/2\in {\rm int}K^0$, a contradiction.
	
	Combining \eqref{vector_not_perpendicular_0} with the facts that  $u\in {\rm bd}K^0$ and $\langle \bar w, u\rangle=0$ we obtain $u\in \ell^-$, yielding $ A^* (u)=0$. Therefore, \eqref{normal_not_subset_kernel_0} is verified.
	
	On the other hand, for every $x\in  S $ satisfying $Ax+b\in  {\rm bd}   K $, we have
	\begin{equation}\label{normal_not_subset_kernel}
	A^*(u)\ne0\, \forall u\in N_K(Ax+b)\setminus\{0\}.
	\end{equation} 
	Indeed, suppose that $A^*(u)=0$ for some  some $u\in N_K(Ax+b)\setminus\{0\}$ and some $x\in  S $ satisfying $Ax+b\in  {\rm bd}   K    $. Since $b\notin {\rm Im}A$, we derive $Ax+b\ne0$. Combining this with the fact that $K$ is smooth yields $N_K(Ax+b)$ is a ray containing $u$. Hence, $A^*[N_K(Ax+b)]=\{0\}$, yielding $N_K(Ax+b)\subset {\rm Ker}A^*$. Furthermore, from Proposition \ref{regular_cone}(i) we deduce that $N_K(Ax+b)\subset K^\circ$. Thus, $N_K(Ax+b)\subset {\rm Ker} A^*\cap K^\circ.$
	From this and the fact that ${\rm Ker} A^*\cap K^\circ=\ell^-$, we get  $N_K(Ax+b)=\ell^-$. 
	
	Moreover, given any $\bar y\in \ell^-\backslash\{0\}$, applying  similar arguments as those used in proving \eqref{normal_not_subset_kernel_0}  we  obtain 
	\begin{equation}\label{vector_not_perpendicular}
	\langle \bar y,z\rangle\ne0\, \forall z\in {\rm bd}K\backslash\ell^+.
	\end{equation}

	Since $Ax+b\in {\rm bd}K\backslash\{0\}$ and  $\bar y\in N_K(Ax+b)$, by \eqref{normal_cone_form} we obtain $\langle\bar y, Ax+b\rangle=0$. Invoking \eqref{vector_not_perpendicular}, we derive $Ax+b\in \ell^+$. Combining this with the fact that $\ell^+\subset {\rm Im}A$, we get $b\in {\rm Im}A$, a contradiction. So, \eqref{normal_not_subset_kernel} holds.
	
	Assume now that \eqref{convex_cone_inclusion_linear_case} has a global error bound. Thus, there exists $\tau>0$ such that \eqref{linear_inclusion} holds.
	For each $k=1,2,\ldots$, since $w^k\in N={\rm cone}((b+{\rm Im}A)\cap K)$, there exist $t_k\ge0$ and $x^k\in S$ satisfying $w^k=t_k(Ax^k+b)$. Combining this with the fact that $w^k\in {\rm bd}K\backslash\ell^+$ yields $Ax^k+b\in {\rm bd}K\backslash\ell^+$. Hence, it follows from Proposition \ref{regular_cone}(ii) that $N_K(w^k)=N_K(Ax^k+b)$. From this and \eqref{linear_inclusion} we deduce that 
	\begin{equation}\label{global_inclusion_w_k}
	A^*\big[N_{K}(w^k)\big]\cap\bar B_{\mathbb{R}^n}\subset \tau A^*\big[N_{K}(w^k)\cap\bar B_{\mathbb{R}^m}\big]\, \forall k=1,2,\ldots.
	\end{equation}
	For each $k=1,2,\ldots$, from \eqref{normal_not_subset_kernel} we can choose $z^k\in	A^*\big[N_{K}(w^k)\big]\cap\bar B_{\mathbb{R}^n}$ such that $\|z^k\|=1$. Accordingly, by \eqref{global_inclusion_w_k}, there exists $u^k\in N_{K}(w^k)\cap\bar B_{\mathbb{R}^m}$ satisfying $z^k=\tau A^*(u^k)$. Since the sequences $(z^k)$ and $(u^k)$ are bounded, without loss of generality, we assume that $\displaystyle \lim_{k\to \infty}z^k=\bar z$ and  $\displaystyle \lim_{k\to \infty}u^k=\bar u$. The former implies that $\|\bar z\|=1$. Combining the latter with the facts that $\displaystyle \lim_{k\to \infty}w^k=\bar w$ and $u^k\in N_{K}(w^k)$ for every $k=1,2,\ldots$,  we obtain $\bar u\in N_K(\bar w)$. Thus, it follows from \eqref{normal_not_subset_kernel_0} that $A^*(\bar u)=0$. This yields
	$$\bar z=\displaystyle \lim_{k\to \infty}z^k=\lim_{k\to \infty}[\tau A^*(u^k)]=\tau A^*(\bar u)=0,$$
	contradicting the fact that $\|\bar z\|=1$. So, \eqref{convex_cone_inclusion_linear_case} has no global error bounds.

	(iii)  From the assumption that ${\rm dim} {(\rm Im}A)=1$ we get ${\rm Im}A=\ell^+\cup (-\ell^+)$. Now, we consider the following cases.

	\noindent {\bf Case 1.} $b\in {\rm Im}A$.
	
	In this case, we have 
	\begin{equation}\label{linear_inclusion_extra_1}
	Ax+b\in {\rm Im}A
	\end{equation}
	for every $x\in \mathbb{R}^n $ and 	
	\begin{equation}\label{linear_inclusion_extra_2}
	S=\{x\in\mathbb{R}^n:\,Ax+b\in K\}=\{x\in\mathbb{R}^n:\,Ax+b\in \ell^+\}. 
	\end{equation}	
	Fix any $\bar x\in S$ satisfying $A\bar x+b\ne0$.  By the continuity of $A$, there exists $\delta>0$ such that $\|(Ax+b)-(A\bar x+b)\|<\|A\bar x+b\|/2$ for every $x\in B(\bar x,\delta)$. It follows that $Ax+b\notin -\ell^+$. Combining this with \eqref{linear_inclusion_extra_1}, we get $Ax+b\in \ell^+$ for every $x\in B(\bar x,\delta)$. This, together with \eqref{linear_inclusion_extra_2}, implies that  $B(\bar x,\delta)\subset S$. Thus, we derive ${\rm int}S=\{x\in \mathbb{R}^n:\, Ax+b\in\ell^+\backslash\{0\}\} \ne\emptyset$  and ${\rm bd}S=\{x\in \mathbb{R}^n:\, Ax+b=0\} $. The former assertion means that $S$ is of full dimension. Combining this with the latter one implies that ${\rm bd}S$ is a hyperplane in $\mathbb{R}^n$. Hence, for every $x\in {\rm bd}S$, $N_S(x)$  is a ray. Moreover, if $x\in {\rm bd}S$ then $A^*[N_K(Ax+b)]=A^*(K^\circ)\ne \{0\}$, since $A^*\ne0$ and ${\rm int} K^\circ\ne\emptyset$. Therefore, $N_S(x)=A^*[N_K(Ax+b)]$ for every $x\in {\rm bd}S$. Clearly, the equation is still satisfied for every $x\in {\rm int}S$. So, the ACQ($S$) holds for \eqref{convex_cone_inclusion_linear_case}.

	For any $x\in S$,  $N_K(Ax+b)=K^\circ$ if $Ax+b=0$ and  $N_K(Ax+b)=\ell^-$ if $Ax+b\ne0$. Here, $\ell^-$ is a constant ray lying in ${\rm bd}K^0$, due to  Proposition \ref{regular_cone}(ii) and the fact that $K^\circ$ is strictly convex.  Moreover, by Theorem \ref{nonlinear_case_theorem}, \eqref{convex_cone_inclusion_linear_case} has a local error bound at every point of $ S $. Then, for some $x\in S$ satisfying $Ax+b=0$, there exists $\tau_1>0$ such that 
	$$A^*\big[N_{K}(Ax+b)\big]\cap \bar B_{\mathbb{R}^n}\subset \tau_1 A^*\big[N_{K}(Ax+b)\cap \bar B_{\mathbb{R}^m}\big],$$
	or, equivalently,
	$$	A^*(K^\circ)\cap \bar B_{\mathbb{R}^n}\subset \tau_1 A^*\big[K^\circ\cap \bar B_{\mathbb{R}^m}\big].$$
	This yields
	\begin{equation}\label{linear_inclusion_special_1}
	A^*\big[N_{K}(Ax+b)\big]\cap \bar B_{\mathbb{R}^n}\subset \tau_1 A^*\big[N_{K}(Ax+b)\cap \bar B_{\mathbb{R}^m}\big]
	\end{equation}
	for every $x\in S$ satisfying $Ax+b=0$.
	Similarly, for some $x\in S$ satisfying $Ax+b\ne0$, there exists $\tau_2>0$ such that 
	$$A^*\big[N_{K}(Ax+b)\big]\cap \bar B_{\mathbb{R}^n}\subset \tau_2 A^*\big[N_{K}(Ax+b)\cap \bar B_{\mathbb{R}^m}\big],$$
	or, equivalently,
	$$A^*(\ell^-)\cap \bar B_{\mathbb{R}^n}\subset \tau_2 A^*\big[\ell^-\cap \bar B_{\mathbb{R}^m}\big].$$
	It follows that
	\begin{equation}\label{linear_inclusion_special_2}
	A^*\big[N_{K}(Ax+b)\big]\cap \bar B_{\mathbb{R}^n}\subset \tau_2 A^*\big[N_{K}(Ax+b)\cap \bar B_{\mathbb{R}^m}\big]
	\end{equation}
	for every $x\in S$ satisfying $Ax+b\ne0$.
	Setting $\tau=\max\{\tau_1,\tau_2\}$, from \eqref{linear_inclusion_special_1} and \eqref{linear_inclusion_special_2} we deduce that \eqref{linear_inclusion} holds.
	So, the inclusion \eqref{convex_cone_inclusion_linear_case} has a global error bound.

	\noindent {\bf Case 2.} $b\notin {\rm Im}A$.	
	
	In this case, we also have that \eqref{SCQ_Affine} holds. This means that the SCQ for \eqref{convex_cone_inclusion_linear_case} is satisfied . Therefore,  ACQ($S$) holds for \eqref{convex_cone_inclusion_linear_case}. 
	
	Next,  we see that $(b+{\rm Im}A)\cap {\rm bd}K\ne\emptyset$. Indeed, if $b\in {\rm bd}K$, then the claim is true. If $b\notin K$, then taking $\bar y\in (b+{\rm Im}A)\cap {\rm int}K$, we have  $[b,\bar y]\cap {\rm bd}K\ne\emptyset$, which yields the claim.		
	Moreover, we have $N_K(Ax+b)=\{0\}$ for every $x\in S$ satisfying $(Ax+b)\in {\rm int}K$ and that   $b+{\rm Im}A$ meets ${\rm bd}K$ at most two points, as ${\rm dim} ({\rm  Im}A)=1$.  
	
	Hence, using similar arguments as in the last paragraph of Case 1, we derive that there exists $\tau>0$ such that 
	\eqref{linear_inclusion} holds. So, the inclusion \eqref{convex_cone_inclusion_linear_case} has a global error bound.
	
	The proof is complete.  $\hfill\Box$
\end{proof}

\section{Conclusions}     
This paper investigates error bounds for smooth cone-convex inclusions, providing a characterization of local error bounds using the Abadie constraint qualification. While the equivalence between local error bounds and the Abadie constraint qualification around the reference point is well-established for polyhedral cones, we extend this result to a new class of non-polyhedral convex cones that also satisfies the relationship. Additionally, we identify the conditions under which global error bounds exist for smooth cone-affine inclusions.  Applications of these findings in optimization problems involving smooth-cones, such as second-order cones, $p$-cones, and circular cones, should be conducted in the future.

\end{document}